\newcommand{\mz}{\ensuremath{\mathbb Z}}
\newcommand{\mr}{\ensuremath{\mathbb R}}
\newcommand{\mh}{\ensuremath{\mathbb H}}
\newcommand{\mq}{\ensuremath{\mathbb Q}}
\newcommand{\shortmod}{\ensuremath{\negthickspace \negthickspace \negthickspace \pmod}}
\newcommand{\half}{\ensuremath{ \frac{1}{2}}}
\newcommand{\intR}{\int_{-\infty}^{\infty}}
\newcommand{\sumstar}{\sideset{}{^*}\sum}
\newcommand{\BesselTransform}{B}
\theoremstyle{plain}		
	\newtheorem{mytheo}{Theorem} [section]
	\newtheorem{myprop}[mytheo]{Proposition}
	\newtheorem{mycoro}[mytheo]{Corollary}
     \newtheorem{mylemma}[mytheo]{Lemma}
	\newtheorem{mydefi}[mytheo]{Definition}
	\newtheorem{myremark}[mytheo]{Remark}
\theoremstyle{remark}
\numberwithin{equation}{section}
\numberwithin{figure}{section}
\begin{document}
\title[Weyl-type hybrid subconvexity bounds for twisted $L$-functions]{Weyl-type hybrid subconvexity bounds for twisted $L$-functions and Heegner points on shrinking sets}
\author{Matthew P. Young} 
\address{Department of Mathematics \\
	  Texas A\&M University \\
	  College Station \\
	  TX 77843-3368 \\
		U.S.A.}
\email{myoung@math.tamu.edu}

 \thanks{This material is based upon work supported by the National Science Foundation under agreement No. DMS-1101261.  Any opinions, findings and conclusions or recommendations expressed in this material are those of the authors and do not necessarily reflect the views of the National Science Foundation.}

 \begin{abstract}
 Let $q$ be odd and squarefree, and let $\chi_q$ be the quadratic Dirichlet character of conductor $q$. Let $u_j$ be a Hecke-Maass cusp form on $\Gamma_0(q)$ with spectral parameter $t_j$.  By an extension of work of Conrey and Iwaniec, we show $L(u_j \times \chi_q, 1/2) \ll_{\varepsilon} (q (1 + |t_j|))^{1/3 + \varepsilon}$, uniformly in both $q$ and $t_j$.  A similar bound holds for twists of a holomorphic Hecke cusp form of large weight $k$.  
 Furthermore, we show that $|L(1/2+it, \chi_q)| \ll_{\varepsilon} ((1 + |t|) q)^{1/6 + \varepsilon}$, improving on a result of Heath-Brown.
 
 As a consequence of these new bounds, we obtain explicit estimates for the number of Heegner points of large odd discriminant in shrinking sets. 
 \end{abstract}

 \maketitle
\section{Introduction}
\subsection{Cubic moments}
Let $\chi_q$ be a real, primitive character of conductor $q$ (odd, squarefree).  Suppose that $u_j$ is a Hecke-Maass cuspidal newform of level dividing $q$, and that $f \in B_k(q)$ where $B_k(q)$ denotes the set of weight $k$ holomorphic Hecke newforms of level dividing $q$.

In a remarkable paper in the analytic theory of $L$-functions, Conrey and Iwaniec \cite{CI} showed
\begin{equation}
\label{eq:CIholo}
 \sum_{f \in B_k(q)} L(f \times \chi_q, 1/2)^3 \ll_{k, \varepsilon} q^{1+\varepsilon},
\end{equation}
for $k \geq 12$,
and
\begin{equation}
\label{eq:CIMaass}
 \sum_{t_j \leq T} L(u_j \times \chi_q, 1/2)^3 \ll_{T, \varepsilon} q^{1+\varepsilon}.
\end{equation}
Consequently, $L(\pi \times \chi_q, 1/2) \ll q^{1/3 + \varepsilon}$ for $\pi$ associated to $f$ or $u_j$, by the nonnegativity of these central values \cite{Waldspurger} \cite{KohnenZagier} \cite{KatokSarnak} \cite{Guo}.  Since the conductor of the twisted $L$-function is $q^2$, this amounts to a Weyl-type subconvexity exponent (meaning, the exponent is $1/6$ compared to the convexity bound which has exponent $1/4$) in the $q$-aspect.  Along with the Maass forms, one naturally also includes the continuous spectrum furnished by the Eisenstein series which leads to bounds for Dirichlet $L$-functions, namely $L(1/2 +it, \chi_q) \ll_t q^{1/6 + \varepsilon}$.  This subconvexity bound of Conrey and Iwaniec gave the first improvement on the Burgess bound \cite{Burgess} of $q^{3/16 + \varepsilon}$, for real characters.  There is also work of Heath-Brown \cite{H-B1} that improves on the Burgess bound but for moduli that factor in a favorable way.

The bounds \eqref{eq:CIholo} and \eqref{eq:CIMaass} depend polynomially on $k$ and $T$, respectively, but in an unspecified way (Conrey and Iwaniec state that perhaps $k^3$ is acceptable).  Motivated by some problems related to the equidistribution of Heegner points, it is desirable to obtain bounds as strong as possible in the $T$ aspect; see Section \ref{section:Heegner} below for further discussion of applications.
Along these lines, we mention a handful of results that estimate a cubic moment in the spectral/weight aspect with fixed level (often level $1$).  
Ivi{\'c} \cite{Ivic} showed
\begin{equation}
\label{eq:IvicBound}
 \sum_{T \leq t_j \leq T+1} L(u_j, 1/2)^3 \ll T^{1+\varepsilon},
\end{equation}
where here the Maass forms are of level $1$.   For reference, Weyl's law for $\Gamma_0(q) \backslash \mh$ gives 
\begin{equation}
\sum_{T \leq t_j \leq T+1} 1 \asymp qT, 
\end{equation}
uniformly in both $q, T \gg 1$ (here the notation $f(x) \asymp g(x)$ means there exist constants $c_1, c_2 > 0$ so that $c_1 f(x) \leq g(x) \leq c_2 g(x)$ for all $x$ under consideration).  Ivi{\'c}'s approach is quite different from that of \cite{CI}, and also leads to a Weyl-type subconvexity bound for level $1$ in the archimedean (spectral parameter) aspect.  The weight $k$ analog is due to Zhao Peng \cite{Pe} who showed
\begin{equation}
 \sum_{f \in B_k(1)}  L(f, 1/2)^3 \ll k^{1+\varepsilon}.
\end{equation}
Again this implies a Weyl-type subconvexity bound in the weight aspect (with fixed level $1$).
 Analogs of these cubic moments were used by Xiaoqing Li \cite{XLi} to give the first subconvexity bound for a self-dual $L$-function on $GL_3$ in the $t$-aspect (as well as certain $GL_3 \times GL_2$ Rankin-Selberg twists).  Furthermore, Blomer \cite{BlomerGL3} obtained subconvexity for twists of a self-dual $GL_3$ form by a quadratic Dirichlet character in the $q$-aspect.
  Q. Lu \cite{Lu} has modified Xiaoqing Li's \cite{XLi} method to handle variations of \eqref{eq:IvicBound} with $t_j$ in a larger window ($T \leq t_j \leq T + T^{3/8+\varepsilon}$), and for level $q$ (however, no dependency on $q$ is given).
 
Recently, Petrow \cite{Petrow} has extended the Conrey-Iwaniec bound \eqref{eq:CIholo} to all weights $k \geq 2$ which then implies corresponding bounds for Fourier coefficients of weight $\frac{k+1}{2}$ cusp forms.  Petrow's work is complementary to our results here as we focus on large $k$ (or $T$).  The main idea of Petrow's work is the development of a Motohashi-type spectral summation formula for the cubic moment, which is crucial in establishing \eqref{eq:CIholo} in the especially interesting case $k=2$.
  
Our main result is
\begin{mytheo}
\label{thm:cubicmoment}
 With notation as above, we have
 \begin{equation}
 \label{eq:cubicmomentholomorphic}
  \sum_{f \in B_k(q)} L(f \times \chi_q, 1/2)^3 \ll_{\varepsilon} (kq)^{1+\varepsilon},
 \end{equation}
 for $\chi_{q}(-1) = i^k$ (otherwise the central values all vanish), and for $T \gg 1$,
\begin{equation}
 \sumstar_{T \leq t_j \leq T+1} L(u_j \times \chi_q, 1/2)^3 \ll_{\varepsilon} (q(1+T))^{1+\varepsilon},
\end{equation}
where the star on the sum indicates the sum is restricted to even Maass forms (again, otherwise the central values vanish).
Similarly for the Eisenstein series:
\begin{equation}
\label{eq:DirichletBound}
 \int_{T}^{T+1} |L(1/2 + it, \chi_q)|^6 dt \ll_{\varepsilon} (q(1+T))^{1+\varepsilon}.
\end{equation}
The estimate \eqref{eq:cubicmomentholomorphic} holds for any even $k \geq 12$.
\end{mytheo}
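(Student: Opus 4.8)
\bigskip

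\noindent\emph{Sketch of the approach.} The plan is to follow the architecture of Conrey and Iwaniec \cite{CI}, but to carry out the analysis with explicit control of the archimedean parameter. First I would unify the three statements: the central values are nonnegative \cite{Waldspurger,KohnenZagier,KatokSarnak,Guo}, so after inserting a smooth nonnegative majorant $h(t)$ localizing the spectral parameter near $T$ (in the holomorphic case one works directly with a fixed large weight $k$), it suffices to bound the \emph{complete} spectral average
\[
\sum_j h(t_j)\, L(u_j\times\chi_q,\thalf)^3 \;+\; (\text{continuous spectrum}) \;+\; (\text{holomorphic part}),
\]
in which the continuous-spectrum term reduces to the sixth moment $\int |L(\thalf+it,\chi_q)|^6\, h(t)\, dt$ and thereby yields \eqref{eq:DirichletBound}. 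Writing $L^3 = L\cdot L^2$, I would expand the single factor $L(u_j\times\chi_q,\thalf)$ by its approximate functional equation into a Dirichlet polynomial of length $\asymp q(1+|t_j|)$ in $\lambda_j(n)\chi_q(n)$, and the square $L(u_j\times\chi_q,\thalf)^2$ into one of length $\asymp q^2(1+|t_j|)^2$ with coefficients $\chi_q(m)\sum_{\ell_1\ell_2 = m}\lambda_j(\ell_1)\lambda_j(\ell_2)$; the Hecke relation $\lambda_j(\ell_1)\lambda_j(\ell_2) = \sum_{d\mid(\ell_1,\ell_2)}\lambda_j(\ell_1\ell_2/d^2)$ (legitimate since $(\ell_1\ell_2,q)=1$ here) then reduces everything to bilinear forms in $\lambda_j(m)\lambda_j(n)$.

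Next I would feed the bilinear spectral sum into the Kuznetsov formula on $\Gamma_0(q)$ (the Petersson formula in the holomorphic case), obtaining a diagonal term proportional to $\delta_{m=n}$ together with an off-diagonal sum $\sum_{q\mid c} c^{-1} S(m,n;c)\,\mathcal{B}_h(4\pi\sqrt{mn}/c)$, where $\mathcal{B}_h$ is the Bessel transform attached to $h$. On the diagonal, $m=n$ forces both variables to be $\ll q(1+T)$, and summing the resulting divisor-type coefficients against $1/m$ produces a main term of the admissible size $(q(1+T))^{1+\varepsilon}$; so the entire problem is to show that the off-diagonal is no larger. For this I would open the Kloosterman sum, absorb $\chi_q(m)\chi_q(n)$ into the additive characters, and apply Poisson summation in $m$ and in $n$. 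The resulting complete character sum factors by the Chinese Remainder Theorem into a piece to the ``$c$-modulus'', which is an ordinary Kloosterman sum controlled by Weil's bound, and a piece to the modulus $q$, which is the Conrey--Iwaniec character sum; for $q$ prime this is evaluated essentially exactly, exhibits square-root cancellation, and extends to squarefree $q$ by multiplicativity. This arithmetic input is independent of $T$ and can be imported from \cite{CI} with only cosmetic changes; it is the ultimate source of the exponent $\tfrac16$ in the $q$-aspect.

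The genuinely new ingredient, and the step I expect to be the main obstacle, is the archimedean analysis: one must control $\mathcal{B}_h$ and the oscillatory integrals created by Poisson summation \emph{uniformly in $T$} (respectively in $k$). When $T$ is large, $\mathcal{B}_h(x)$ is negligible for $x \ll T^{1-\varepsilon}$ and, for $x \gg T$, is an oscillatory factor of size $\asymp x^{-1/2}$ with phase of size $\asymp x$; combined with the phases $\asymp\sqrt{m}$ and $\asymp\sqrt{n}$ from Poisson, a stationary-phase analysis then determines the effective ranges of $c$ and of the dual variables. The delicate point is to run this stationary-phase bookkeeping cleanly enough that the number of surviving terms times the size of each is again $(q(1+T))^{1+\varepsilon}$, with no spurious power of $T$ leaking in from the transitional ranges of the Bessel functions or from the mismatch between the long ($q^2(1+T)^2$) and short ($q(1+T)$) Dirichlet polynomials; the holomorphic case, handled through uniform asymptotics for $J_{k-1}$, and the Maass/Eisenstein case must be pushed through in parallel. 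The bulk of the work lies here.
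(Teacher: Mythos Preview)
Your outline follows the Conrey--Iwaniec architecture and correctly flags the archimedean analysis as the new obstacle, but it misidentifies what that analysis must actually deliver. ``Running the stationary-phase bookkeeping cleanly enough that the number of surviving terms times the size of each is $(q(1+T))^{1+\varepsilon}$'' is not what happens, and in fact cannot work: a trivial count of the post-Poisson terms, even after inserting the Deligne bound for the Conrey--Iwaniec sum, loses a full power of $T$. The mechanism that recovers this loss in the paper is a structural one you do not mention. After Poisson in the \emph{three} variables $n_1,n_2,n_3$, the complete sum $G(m_1,m_2,m_3;c)$ carries a phase $e_c(m_1m_2m_3)$, and a careful stationary-phase evaluation of the weight $K(m_1,m_2,m_3,c)$ shows that it carries the \emph{opposite} phase $e_c(-m_1m_2m_3)$; moreover, the residual lower-order oscillation of $K$ depends on $m_1,m_2,m_3,c$ only through the single block $m_1m_2m_3/c$. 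This permits a Mellin separation $(m_1m_2m_3/c)^{iu}$ over $|u|\ll U$ with $U\asymp T/\Delta$. Combined with the multiplicative expansion $H^*(w;q)=\phi(q)^{-1}\sum_\psi \tau(\overline\psi)g(\chi,\psi)\psi(w)$, one arrives at a genuine bilinear form to which Gallagher's \emph{hybrid} large sieve (simultaneously over $\psi\pmod q$ and over $u$) applies. It is precisely the hybrid large sieve that converts the length-$U$ archimedean integral from a loss into a gain; with only the $q$-aspect large sieve the bound is off by $T^{1-\varepsilon}$.

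Two smaller points. First, your ``Poisson in $m$ and in $n$'' after collapsing $L^2$ to a single long variable would destroy the three-variable structure on which the evaluation of $G$ rests: one must keep $n_1,n_2,n_3$ separate through Poisson so that $G$ depends only on the product $m_1m_2m_3$ (this is what makes the phase cancellation and the subsequent separation possible). Second, the complete sum does not factor via CRT into ``an ordinary Kloosterman sum to the $c$-modulus times the Conrey--Iwaniec sum to modulus $q$''; the Conrey--Iwaniec evaluation gives instead $G(m_1,m_2,m_3;qr)=e_{qr}(m_1m_2m_3)\cdot (rq^2)^{-1}H(\overline{r}m_1m_2m_3;q)$ up to harmless factors, with the $r$-part essentially trivial and no Weil bound entering at that stage.
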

The new features here compared to the previously-mentioned results is that our estimates are completely uniform in $q$ and $T$ (or $k$) together.  This leads to a Weyl-type subconvexity exponent valid in a wide range in $(q,T)$ parameter space, namely
\begin{equation}
 L(u_j \times \chi_q, 1/2) \ll (q(1+T))^{1/3 + \varepsilon},
\end{equation}
and similarly for the holomorphic forms.  For reference, the analytic conductor of $L(u_j \times \chi_q, 1/2)$ is $q^2(1+T^2)$.  The previously best-known subconvexity bound for these twisted $L$-functions with uniformity in both $q$ and $T$ (or $k$) is apparently due to Blomer and Harcos \cite{BH}.  Their bound is of Burgess quality, meaning that in the $q$-aspect the exponent is $3/8+\varepsilon$ instead of $1/3+\varepsilon$; however, their result is more general in that it allows $u_j$ (or $f$) to have arbitrary level, and the twisting character does not have to be quadratic.

One pleasant feature of our proof is that in large part it handles the holomorphic and Maass cases simultaneously, and there is no need to use the intricate asymptotic expansions of Bessel functions uniform in both the index and the argument.

The bound \eqref{eq:DirichletBound} implies $|L(1/2 + it, \chi_q)| \ll (q(1+|t|))^{1/6 + \varepsilon}$, which improves on results of Heath-Brown \cite{H-B1} \cite{H-B2} and Huxley and Watt \cite{HuxleyWatt}, for quadratic characters.

\subsection{Arithmetical applications of the cubic moments}
We will see in Section \ref{section:Heegner} below that for applications to equidistribution of Heegner points on thin sets, the cubic moment itself is more useful than the subconvexity bound that it implies.  One easy-to-state application is the following
\begin{myprop}
\label{prop:congruence}
Suppose $-D < 0$ is a sufficiently large, odd fundamental discriminant. 
\begin{enumerate}
 \item Fix $\frac49 < \eta \leq \frac12$.  There exist solutions to $b^2 \equiv - D \pmod{4a}$ for some $a$ and $b$ with $b \asymp D^{\eta}$.
 \item Fix $\frac{1}{3} < \eta \leq \frac12$. There exist solutions to $b^2 \equiv - D \pmod{4a}$ for some $a$ and $b$ with $a \asymp D^{\eta}$.
\end{enumerate}
In both cases, the number of solutions is $\gg h(-D)/D^{1/2-\eta}$, where $h(-D)$ is the class number, however the implied constant is ineffective.
\end{myprop}
These are both special cases of a more general result on the distribution of Heegner points in shrinking sets.  See Theorem \ref{thm:Heegner} below and following discussion for elaboration.  In principle, a subconvexity bound of the form $L(u_j \times \chi_{-D}, 1/2) \ll D^{1/2-\delta} (1 + |t_j|)^{B}$ (for some $\delta, B > 0$) would lead to a version of Proposition \ref{prop:congruence} with some $\eta < 1/2$.  One purpose in this paper is to strive for a small numerical value of $\eta$.

\subsection{Acknowledgment}
I would like to thank Ian Petrow and the referee for careful readings and extensive comments that improved the paper.

\section{Applications}
\label{section:Heegner}
In this section we discuss some arithmetical applications of the new hybrid subconvexity bound.  These all can be expressed as certain lattice point estimates.  

The Heegner points of (fundamental) discriminant $-D < 0$ can be identified with the collection of $SL_2(\mz)$-orbits of binary quadratic forms $ax^2 + bxy + cy^2$ of discriminant $b^2 - 4ac = -D$.  To each such quadratic form, one has the Heegner point $\tau = \frac{-b + i \sqrt{D}}{2a}$ which of course can be chosen to lie inside the usual fundamental domain $\mathcal{F}$ for $SL_2(\mz) \backslash \mh$.  Let $\Lambda_{D}$ denote the set of $SL_2(\mz)$-classes of Heegner points of discriminant $-D$.  The cardinality of $\Lambda_{D}$ is $h(-D)$, the class number.

Duke \cite{Duke} showed that $\Lambda_{D}$ becomes equidistributed in $SL_2(\mz) \backslash \mh$ as $D \rightarrow \infty$, in part by extending work of Iwaniec \cite{IwaniecHalf} bounding the Fourier coefficients of half-integral weight cusp forms.  Duke used a period formula of Maass to relate the Weyl sums over the Heegner points to these Fourier coefficients.

Following the method of Harcos and Michel \cite{HarcosMichel}, here we directly relate the Weyl sums to central $L$-values, which is provided by a formula of 
Waldspurger/Zhang \cite{Waldspurger} \cite{Zhang}.
Suppose that $u_j$ is a Hecke-Maass cusp form, orthonormalized with $\frac{dx dy}{y^2}$ (not probability measure), and define the Weyl sum
\begin{equation}
W_{D,u_j} = \sum_{\tau \in \Lambda_D } u_j(\tau).
\end{equation}
Then (e.g., see \cite[(5.1)]{LMY} for this particular formulation entirely in terms of $L$-functions)
\begin{equation}
\label{eq:periodformula}
|W_{D,u_j}|^2=\frac{\sqrt{D}L(u_j \times \chi_{-D}, \tfrac{1}{2})L(u_j,\tfrac{1}{2})}{2 L(\mathrm{sym}^2 u_j,1)}.
\end{equation}
A similar formula holds for Eisenstein series (see \cite[(22.45)]{IK} or \cite[(5.8)]{LMY}), namely if we define $W_{D, t} = \sum_{\tau \in \Lambda_D} E(\tau, 1/2 + it)$, then
\begin{equation}
\label{eq:periodformulaEisenstein}
 |W_{D, t}| = c(D) D^{1/4} \frac{|L(1/2 + it, \chi_{-D}) \zeta(1/2 + it)|}{|\zeta(1 + 2it)|},
\end{equation}
for some function $0 < c(D) \leq 10$ (in fact $c(D)$ only depends on the number of units of $\mq(\sqrt{-D})$).

Next we will set up equidistribution of Heegner points on thin sets, somewhat similarly to \cite{YoungQUE} (which treated QUE for thin sets).  In \cite{LMY} we studied  Heegner points with varying level, which is another notion of ``thin'': the number of Heegner points is independent of the level $q$ (if say $q$ is prime), yet the volume of $\Gamma_0(q) \backslash \mh$ increases with $q$.  The difference here is that we are fixing the level to be $1$, and varying the archimedean aspect.
For a given $V \geq 1$, choose a smooth and compactly-supported function $\phi: SL_2(\mz) \backslash \mh \rightarrow \mr$, satisfying $|\Delta^n \phi(x+iy)| \leq C(n) V^{2n}$, for all $n=0,1,2,\dots$.  We view the list of $C(n)$ as fixed, and $V$ may vary with the discriminant $-D$.
\begin{mytheo}
\label{thm:Heegner}
 Let $\phi$ be as above, and let $-D <0$ be an odd fundamental discriminant.  Then
 \begin{equation}
 \label{eq:HeegnerEquidistribution}
  \sum_{\tau \in \Lambda_D} \phi(\tau) = h(-D) \int_{\mathcal{F}}  \phi(z) \frac{3}{\pi} \frac{dx dy}{y^2} + O(\| \phi \|_2 D^{5/12 + \varepsilon} V^{1+\varepsilon} + (DV)^{-100}).
 \end{equation}
 The implied constant depends only on $\varepsilon > 0$ and the $C(n)$.  If one assumes the Lindel\"{o}f hypothesis for $L(u_j \times \chi_{-D}, s)$ and $L(s,\chi_{-D})$, then \eqref{eq:HeegnerEquidistribution} holds with $D^{5/12}$ replaced by $D^{1/4}$.
\end{mytheo}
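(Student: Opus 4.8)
The plan is to run the usual spectral-decomposition argument, inserting the new cubic moment of Theorem~\ref{thm:cubicmoment} together with Ivi\'c's bound \eqref{eq:IvicBound} at the decisive step. First I would expand $\phi$ in the spectral basis of $SL_2(\mz)\backslash\mh$: writing $\{u_j\}$ for an orthonormal (with respect to $\frac{dxdy}{y^2}$) basis of Hecke--Maass cusp forms and $E_t=E(\cdot,\thalf+it)$ for the unitary Eisenstein series, one has $\phi(z)=\frac{3}{\pi}\int_{\mathcal F}\phi\,\frac{dxdy}{y^2}+\sum_j\langle\phi,u_j\rangle u_j(z)+\frac{1}{4\pi}\int_{-\infty}^{\infty}\langle\phi,E_t\rangle E_t(z)\,dt$, the only residual spectrum at level one being the constant function. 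Since $\phi$ is smooth and compactly supported this converges well enough to evaluate at each $\tau\in\Lambda_D$ and sum: using $\vol(SL_2(\mz)\backslash\mh)=\pi/3$, the constant term produces exactly the main term $h(-D)\frac{3}{\pi}\int_{\mathcal F}\phi\,\frac{dxdy}{y^2}$, and the remainder is $\sum_j\langle\phi,u_j\rangle W_{D,u_j}$ plus the analogous Eisenstein integral. Only the even cusp forms survive, since $\Lambda_D$ is invariant under $\tau\mapsto-\overline\tau$, which kills the odd forms --- consistently with the vanishing of $L(u_j\times\chi_{-D},\thalf)$ in \eqref{eq:periodformula} for odd $u_j$.

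The next step is to cut off the high frequencies. Because $\Delta u_j=(\tfrac14+t_j^2)u_j$, integrating by parts $n$ times and using $|\Delta^n\phi|\le C(n)V^{2n}$ gives $\langle\phi,u_j\rangle\ll_n \vol(\operatorname{supp}\phi)^{1/2}(V/(1+|t_j|))^{2n}$, and similarly for $\langle\phi,E_t\rangle$; hence the terms with $1+|t_j|\gg V^{1+\varepsilon}$ contribute at most $(DV)^{-100}$, while Parseval gives $\sum_j|\langle\phi,u_j\rangle|^2\le\|\phi\|_2^2$. It then remains to estimate $\sum_{1+|t_j|\ll V^{1+\varepsilon}}|\langle\phi,u_j\rangle|\,|W_{D,u_j}|$. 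Feeding in \eqref{eq:periodformula} and the Hoffstein--Lockhart bound $L(\mathrm{sym}^2 u_j,1)\gg(1+|t_j|)^{-\varepsilon}$, this is $\ll D^{1/4+\varepsilon}\sum_{t_j\ll V^{1+\varepsilon}}|\langle\phi,u_j\rangle|\,L(u_j\times\chi_{-D},\thalf)^{1/2}L(u_j,\thalf)^{1/2}$. Now I would apply H\"older with exponents $(\tfrac32,6,6)$: the $L(u_j\times\chi_{-D},\thalf)^3$-sum is $\ll(DV^2)^{1+\varepsilon}$ by summing Theorem~\ref{thm:cubicmoment} over the unit intervals $[T,T+1]$ with $T\le V$; the $L(u_j,\thalf)^3$-sum is $\ll V^{2+\varepsilon}$ by the same summation of \eqref{eq:IvicBound}; and $\bigl(\sum_{t_j\ll V}|\langle\phi,u_j\rangle|^{3/2}\bigr)^{2/3}\le\bigl(\sum_j|\langle\phi,u_j\rangle|^2\bigr)^{1/2}\bigl(\sum_{t_j\ll V}1\bigr)^{1/6}\ll\|\phi\|_2 V^{1/3}$ by Parseval and Weyl's law. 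Each of the three sixth-root factors contributes a power $V^{1/3}$, and the total is $\ll\|\phi\|_2 D^{1/4+1/6}V(DV)^\varepsilon=\|\phi\|_2 D^{5/12+\varepsilon}V^{1+\varepsilon}$, as required.

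The Eisenstein integral is treated identically: the analogue of \eqref{eq:periodformula} for the continuous spectrum expresses $|W_{D,E_t}|^2$ through $\sqrt D\,|L(\thalf+it,\chi_{-D})|^2|\zeta(\thalf+it)|^2/|\zeta(1+2it)|^2$, so after H\"older one uses the sixth-moment bound \eqref{eq:DirichletBound}, the fourth moment of $\zeta$ (to control $\int|\zeta(\thalf+it)|^3$ by Cauchy--Schwarz), and $|\zeta(1+2it)|^{-1}\ll\log(2+|t|)$; this comes out no larger than the cuspidal bound. For the conditional statement, under the Lindel\"of hypothesis for $L(u_j\times\chi_{-D},s)$ and $L(s,\chi_{-D})$ one has $|W_{D,u_j}|^2\ll D^{1/2+\varepsilon}(1+|t_j|)^\varepsilon L(u_j,\thalf)$, so Cauchy--Schwarz together with the first moment $\sum_{t_j\ll V}L(u_j,\thalf)\ll V^{2+\varepsilon}$ (itself immediate from \eqref{eq:IvicBound} and Weyl's law) replaces $D^{5/12}$ by $D^{1/4}$.

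I do not expect a genuinely hard step once Theorem~\ref{thm:cubicmoment} is in hand; the work is organizational. The points requiring care are: keeping the parity and the cuspidal/Eisenstein decompositions straight; choosing the H\"older exponents so that the twisted cubic moment, Ivi\'c's cubic moment, and the Parseval--Weyl input all enter at exactly the powers producing the clean exponent $D^{5/12}V$ rather than something worse; and handling the continuous spectrum on the same footing, including the harmless factor $1/|\zeta(1+2it)|$. The ineffectivity noted in Proposition~\ref{prop:congruence} enters only afterwards, when one wants the main term, of size $h(-D)\asymp D^{1/2}$, to dominate the error term, which requires Siegel's theorem.
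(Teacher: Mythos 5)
Your proposal is correct and follows essentially the same route as the paper: spectral decomposition, truncation at $t_j \ll V(DV)^{\varepsilon}$ via $\Delta^N\phi$, the Waldspurger/Zhang period formula, and H\"older combined with the cubic moments; your single application of H\"older with exponents $(\tfrac32,6,6)$ is just a repackaging of the paper's Cauchy--Schwarz-plus-Bessel followed by H\"older with exponents $(3,3,3)$, and yields the identical numerology $D^{1/4}\cdot D^{1/6}\cdot V^{1/3+1/3+1/3}=D^{5/12}V$. The only point worth making explicit is that bounding the level-one sum $\sum_{t_j}L(u_j\times\chi_{-D},\tfrac12)^3$ by Theorem \ref{thm:cubicmoment} with $q=D$ uses the nonnegativity of the central values to embed the level-one forms into the family of level dividing $D$.
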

The point is that we are able to explicitly give the dependence of the error term on $\phi$ (previous works on equidistribution such as \cite{Duke} typically treated $\phi$ as fixed).  The implied constant is in principle effective (however quite difficult to compute); the ineffectivity in Proposition \ref{prop:congruence} arises from ensuring the main term is larger than the error term, which in turn relies on Siegel's lower bound on the class number.

\begin{proof}
By a spectral decomposition of $\phi$, we have
\begin{equation}
 \sum_{\tau \in \Lambda_D} \phi(\tau) = h(-D) \langle \phi, \tfrac{3}{\pi} \rangle + \sum_j \langle \phi, u_j \rangle W_{D, u_j} + (\text{Eisenstein}).
\end{equation}
We bound the spectral coefficients by
\begin{equation}
(1/4 + t_j^2)^N \langle \phi, u_j \rangle = \langle \phi, \Delta^N u_j \rangle = \langle \Delta^N \phi, u_j \rangle \ll V^{2N}.
\end{equation}
Thus if $t_j \gg V (DV)^{\varepsilon}$, then $\langle \phi, u_j \rangle$ is very small.  Therefore,
\begin{equation}
\label{eq:phidifference}
\Big|\sum_{\tau \in \Lambda_D} \phi(\tau) - h(-D) \langle \phi, \tfrac{3}{\pi} \rangle \Big| \leq \sum_{t_j \ll V (DV)^{\varepsilon}} |\langle \phi, u_j \rangle W_{D, u_j}| + |(\text{Eisenstein})| + O((DV)^{-100}).
\end{equation}
By Cauchy's and Bessel's inequalities, we have that \eqref{eq:phidifference} is
\begin{equation}
\ll \| \phi \|_2 \Big(\sum_{t_j \ll V (DV)^{\varepsilon}} |W_{D,u_j}|^2 \Big)^{1/2} + |(\text{Eisenstein})| + (DV)^{-100}.
\end{equation}
Next we use \eqref{eq:periodformula} and H\"{o}lder's inequality (with exponents $3, 3, 3$), giving that \eqref{eq:phidifference} is
\begin{equation}
\ll \| \phi \|_2 D^{\frac14+\varepsilon} V^{\frac13 + \varepsilon} \Big( \sum_{t_j \ll V (DV)^{\varepsilon}} L(u_j \times \chi_{-D}, 1/2)^3 \Big)^{\frac16} \Big(\sum_{t_j \ll V (DV)^{\varepsilon}} L(u_j, 1/2)^{3} \Big)^{\frac16} + \dots,
\end{equation}
where the dots indicate the continuous spectrum and the error term.
By Theorem \ref{thm:cubicmoment}, this is
\begin{equation}
 \ll \| \phi \|_2 D^{5/12 + \varepsilon} V^{1+\varepsilon} + (DV)^{-100}.
\end{equation}
Note that the sums over $t_j$ are over level $1$ Maass forms, which when applying Theorem \ref{thm:cubicmoment} are included into all Maass forms of level dividing $D$.  This remark explains how the Lindel\"{o}f hypothesis replaces $D^{5/12}$ by $D^{1/4}$ in \eqref{eq:HeegnerEquidistribution}.

Although we have not explicitly written the contribution of the Eisenstein series, needless to say that the same bound holds for this part as for the Maass form contribution.
\end{proof}

Next we interpret this bound arithmetically.  There are a variety of choices one can make.  One particularly simple option is to choose $\phi$ to approximate the region $\frac{1}{2V} \leq |x| \leq \frac{1}{V}$, $1 \leq y \leq 2$.  One can choose $\phi$ so that $\|\phi \|_1 \asymp V^{-1}$ and $\|\phi \|_2 \asymp V^{-1/2}$.  Theorem \ref{thm:Heegner} implies that there exists a $\tau \in \Lambda_D$ inside the support of $\phi$ 
provided that $\frac{h(-D)}{V} \gg D^{5/12+\varepsilon} V^{1/2}$, which is valid for $V \ll D^{1/18-\varepsilon}$ by Siegel's ineffective lower bound on the class number.  Therefore, there exists a solution (actually, $\gg h(-D)/V$ solutions) to $b^2 \equiv -D \pmod{4a}$ with $\frac{\sqrt{D}}{a} \asymp 1$ and $|b/a| \asymp V^{-1}$.  That is, $a \asymp \sqrt{D}$ and $b \asymp \sqrt{D}/V$, so setting $V = D^{1/2-\eta}$ we see that any $4/9 < \eta \leq 1/2$ is allowable.
This gives part (1) of Proposition \ref{prop:congruence}.

Part (2) of Proposition \ref{prop:congruence} could be proved along similar lines as part (1), giving the same value $\eta > 4/9$.  Inspired by a comment of W. Duke\footnote{Talk, ``The distribution of modular closed geodesics revisited'' at the Analysis, Spectra, and Number Theory Conference, December 2014}, we can obtain the numerical improvement as follows.  For $z \in \mathcal{F}$, let $\phi(x+iy) = g(y/V)$ where $g$ is nonnegative, has support on $[1,2]$, and satisfies $g(y) =1$ for $1.25\leq y \leq 1.75$.  Then $\langle \phi, u_j \rangle = 0$ by a direct calculation using the Fourier expansion.  Similarly, the projection of $\phi$ onto Eisenstein series picks up only the constant term of the Fourier expansion, giving
\begin{equation}
 \langle \phi, E(\cdot, 1/2 + it) \rangle = \int_0^{\infty} \Big(y^{1/2 + it} + \frac{\zeta^*(1-2it)}{\zeta^*(1+2it)} y^{1/2-it} \Big) g(y/V) \frac{dy}{y^2},
\end{equation}
which is $\ll V^{-1/2} |\widetilde{g}(-1/2 + it)|$.  Therefore, \eqref{eq:phidifference} simplifies in this case to give
\begin{equation}
 \Big|\sum_{\tau \in \Lambda_D} \phi(\tau) - h(-D) \langle \phi, \tfrac{3}{\pi} \rangle \Big| \ll \frac{D^{1/4}}{V^{1/2}} \intR |\widetilde{g}(-1/2 + it)| 
 \frac{|L(1/2 + it, \chi_{-D}) \zeta(1/2 + it)|}{|\zeta(1 + 2it)|} dt.
\end{equation}
Since $g$ is fixed, $\widetilde{g}(-1/2 + it)$ has rapid decay, and so by the original bound of Conrey-Iwaniec we have
\begin{equation}
 \sum_{\tau \in \Lambda_D} \phi(\tau) \gg \frac{h(-D)}{V} + O(V^{-\frac12} D^{\frac14 + \frac16 + \varepsilon}).
\end{equation}
The main term is larger than the error term provided $V \ll D^{1/6-\varepsilon}$.  A Heegner point in the support of this $\phi$ has
$a \asymp V^{-1} D^{1/2}$ which can be made as small as $D^{\frac12 - \frac16 + \varepsilon}$, as claimed.
These Heegner points are approaching the cusp $\infty$ fairly quickly.  The Lindel\"{o}f Hypothesis would imply that $a \gg D^{\varepsilon}$ is allowable.

Finally, we mention one other variation where we only ask for an upper bound on the number of Heegner points in a small box. 
\begin{mycoro}
\label{coro:HeegnerUpperBound}
 Fix $x_0 + i y_0 \in \mathcal{F}$, and let $V \gg 1$.  Then the number of Heegner points $\tau \in \mathcal{F}$ of discriminant $-D$ such that $|\tau - (x_0 + i y_0)| \ll V^{-1}$ is
 \begin{equation}
 \ll \frac{h(-D)}{V^2} + D^{5/12 + \varepsilon} V^{\varepsilon}.
\end{equation}
The implied constant depends on $x_0+ iy_0$ and $\varepsilon$.
\end{mycoro}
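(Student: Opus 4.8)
The plan is to deduce Corollary \ref{coro:HeegnerUpperBound} from Theorem \ref{thm:Heegner} by testing against a nonnegative smooth bump that dominates the indicator function of the ball
\[
B = \{\tau : |\tau - (x_0+iy_0)| \ll V^{-1}\}.
\]
Since $x_0+iy_0$ is a \emph{fixed} point of $\mathcal{F}$, its height $y_0$ is bounded above and below by constants depending only on $x_0+iy_0$; hence on a fixed-size neighborhood of $x_0+iy_0$ the hyperbolic and Euclidean metrics (and the associated area elements) are comparable, and all implied constants below are permitted to depend on $x_0+iy_0$.

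First I would choose a smooth $\phi : SL_2(\mz)\backslash\mh \to [0,1]$ with $\phi \equiv 1$ on $B$ and $\operatorname{supp}\phi$ contained in the concentric ball of twice the radius; for $D$ (hence $V$) large this neighborhood is small and avoids the cusp, so such a $\phi$ is obtained by rescaling a fixed bump profile to scale $V^{-1}$ in local coordinates at $x_0+iy_0$. Because $y_0 \asymp 1$, the chain rule gives $|\Delta^n\phi(x+iy)| \leq C(n)V^{2n}$ with each $C(n)$ depending only on the profile and $y_0$, so $\phi$ is an admissible test function for Theorem \ref{thm:Heegner}. Moreover $\operatorname{supp}\phi$ has hyperbolic area $\asymp V^{-2}$, so that $\|\phi\|_1 \asymp V^{-2}$ and $\|\phi\|_2 \asymp V^{-1}$, the norms being taken with respect to $\frac{dx\,dy}{y^2}$.

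Then, since $\phi \geq 0$ and $\phi \geq \mathbf{1}_B$,
\[
\#\{\tau \in \Lambda_D : \tau \in B\} \leq \sum_{\tau \in \Lambda_D} \phi(\tau) = h(-D)\,\frac{3}{\pi}\int_{\mathcal F} \phi(z)\,\frac{dx\,dy}{y^2} + O\Bigl(\|\phi\|_2\, D^{5/12+\varepsilon} V^{1+\varepsilon} + (DV)^{-100}\Bigr)
\]
by Theorem \ref{thm:Heegner}. The main term is $\ll h(-D)\|\phi\|_1 \ll h(-D)/V^2$; the first error term is $\ll V^{-1}\,D^{5/12+\varepsilon}V^{1+\varepsilon} = D^{5/12+\varepsilon}V^{\varepsilon}$; and $(DV)^{-100}$ is negligible. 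Adding these estimates gives the asserted bound. I do not anticipate a genuine obstacle here: the only step requiring (mild) care is the verification that a bump concentrated at scale $V^{-1}$ obeys the Laplacian bounds $|\Delta^n\phi| \leq C(n)V^{2n}$ uniformly in $V$ and $D$, which is precisely where the hypothesis that $x_0+iy_0$ is fixed (so that $y_0 \asymp 1$) enters.
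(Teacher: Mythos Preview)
Your proposal is correct and matches the paper's own proof essentially verbatim: the paper simply applies Theorem~\ref{thm:Heegner} with a nonnegative $\phi$ that equals $1$ on the ball $|\tau-(x_0+iy_0)|\ll V^{-1}$ and has $\|\phi\|_1\asymp V^{-2}$, $\|\phi\|_2\asymp V^{-1}$. Your additional remarks justifying the Laplacian bounds and the hyperbolic--Euclidean comparison near a fixed interior point are a welcome elaboration of details the paper leaves implicit.
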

One pleasing feature of this upper bound is that it is $o(D^{1/2})$ for a wide range of values of $V$.  To prove Corollary \ref{coro:HeegnerUpperBound}, we apply Theorem \ref{thm:Heegner} with a nonnegative function $\phi$ that equals $1$ on $|\tau - (x_0 + iy_0)| \ll V^{-1}$, such that $\|\phi\|_1 \asymp V^{-2}$, and $\|\phi \|_2 \asymp V^{-1}$.

\section{High-level sketch of the method}
Here we give a brief overview of the proof focusing on the essential details.  By an approximate functional equation, it suffices to show
\begin{equation}
\label{eq:initialsum}
 \sum_{n_1, n_2, n_3 \ll (qT)^{1+\varepsilon}} \thinspace \thinspace \sumstar_{T \leq t_j \leq T+\Delta} w_j^* \frac{\chi_{q}(n_1 n_2 n_3) \lambda_j(n_1) \lambda_j(n_2) \lambda_j(n_3)}{\sqrt{n_1 n_2 n_3}} \ll (qT)^{1+\varepsilon},
\end{equation}
where $w_j^*$ are weights arising in the Kuznetsov formula, so that $\sum_{T \leq t_j \leq T+1} w_j^* \asymp qT$ (so the weights are $\asymp 1$ on average, by Weyl's law), and where $\Delta$ is an arbitrarily small power of $T$.
The Kuznetsov formula converts the sum over $t_j$ into a sum of Kloosterman sums, transforming the left hand side of \eqref{eq:initialsum} to the form 
\begin{equation}
\label{eq:HSum}
q \sum_{n_1, n_2, n_3 \ll (qT)^{1+\varepsilon}} \sum_{c \equiv 0 \shortmod{q}} \frac{1}{c} \frac{\chi_{q}(n_1 n_2 n_3) S(n_1 n_2, n_3;c)}{\sqrt{n_1 n_2 n_3}} \BesselTransform\Big(\frac{4 \pi \sqrt{n_1 n_2 n_3}}{c} \Big),
\end{equation}
where $B$ is given as a certain integral transform.  This weight function takes the rough shape
\begin{equation}
 \BesselTransform(x) \approx \frac{\Delta T}{\sqrt{x}} \cos\Big(x - 2\frac{T^2}{x} + \dots\Big),
\end{equation}
and is very small for $x \ll \Delta T^{1-\varepsilon}$, meaning that we may truncate the sum over $c$ at $\sqrt{n_1 n_2 n_3}/(\Delta T)$.  Actually, we need to treat two different types of $\BesselTransform$ because we need to restrict to a sum over the even part of the spectrum, but for Theorem \ref{thm:cubicmoment} both cases are fairly similar.
Suppose that each $n_i \asymp N_i$, where $N_i \ll (qT)^{1+\varepsilon}$.

Following \cite{CI}, we apply Poisson summation to each sum over $n_i$ modulo $c$, transforming \eqref{eq:HSum} into an expression of the form
\begin{equation}
  \sum_{\substack{m_1, m_2, m_3}} \sum_{c \equiv 0 \shortmod{q}} \frac{q}{c} G(m_1, m_2, m_3;c) K(m_1, m_2, m_3, c),
\end{equation}
where with $e_c(x) = e(x/c)$, we define
 \begin{equation}
 \label{eq:Gdef}
G(m_1 ,m_2, m_3;c) = c^{-3} \sum_{a_1, a_2, a_3 \shortmod{c}} \chi_q(a_1 a_2 a_3) S(a_1 a_2, a_3;c) e_c(a_1m_1 + a_2 m_2 + a_3 m_3),
\end{equation}
and
\begin{equation}
K(m_1 ,m_2, m_3, c) = (N_1 N_2 N_3)^{-1/2} \int_{\mr^3} \BesselTransform\Big(\frac{4 \pi \sqrt{t_1 t_2 t_3}}{c} \Big) e_c(-m_1 t_1 - m_2 t_2 - m_3 t_3) dt_1 dt_2 dt_3.
\end{equation}
Conrey and Iwaniec evaluated $G$, giving that if some mild coprimality restrictions are in place then with $c = qr$, we have
\begin{equation}
 G(m_1 ,m_2, m_3;c) = \frac{\chi_{q}(-1) e_c(m_1 m_2 m_3)}{q^2 r} H(\overline{r} m_1 m_2 m_3;q),
\end{equation}
where $H$ is a certain two-variable complete character sum (see \eqref{eq:Hwqdef} below for its definition or \eqref{eq:H*intro} for a close variant).  
As for $K$, by an elaborate stationary phase analysis, we obtain that
\begin{equation}
\label{eq:KapproxSketchSection}
K(m_1 ,m_2, m_3, c) \approx \frac{c^2 \Delta T}{(N_1 N_2 N_3)^{1/2}} e_c(-m_1 m_2 m_3)  e\Big(\alpha \frac{T^2 c}{m_1 m_2 m_3} + \dots\Big),
\end{equation}
where $\alpha \neq 0$ is a fixed constant.  Furthermore, $K$ is very small unless $m_i \asymp M_i = \sqrt{N_1 N_2 N_3}/N_i$; this is already a square-root savings in each variable due to a reduction in length compared to $N_i$.  Note the quite remarkable cancellation in the primary phase $e_c(m_1 m_2 m_3)$, as well as the common feature that $G$ and $K$ essentially only depend on $m_1 m_2 m_3/r$ and $q$.  This feature allows for a particularly efficient separation of variables.  
Our work departs from \cite{CI} in the analysis of $K$.  When $T$ is large, then $K$ multiplied by $e_c(m_1 m_2 m_3)$ is oscillatory which makes the separation of variables nontrivial.

Next we discuss how the variables are separated, first arithmetically and then analytically.  It suffices to consider a variant of $H$ where the variables inside the summation have a coprimality condition, that is,
\begin{equation}
\label{eq:H*intro}
 H^*(w;q) = \sum_{\substack{u, v \shortmod{q} \\ (uv-1, q) =1}} \chi_{q}(uv(u+1)(v+1)) e_q((uv-1)w),
\end{equation}
in which case from \cite[(11.9)]{CI}, 
\begin{equation}
 H^*(w;q) = \frac{1}{\phi(q)} \sum_{\psi \shortmod{q}} \tau(\overline{\psi}) g(\chi, \psi) \psi(w).
\end{equation}
Here $g(\chi, \psi)$ is $O(q^{1+\varepsilon})$ by Deligne's bound, but otherwise we do not need any properties of $g$.  A similar separation of variables applies to $K$ by the Mellin transform, giving
\begin{equation} K(m_1, m_2, m_3, c) \approx \frac{c^{3/2} \Delta}{(N_1 N_2 N_3)^{1/4}} e_c(-m_1 m_2 m_3)  L(m_1, m_2, m_3, c),
\end{equation}
where
\begin{equation}
 L(m_1, m_2, m_3, c) = \int_{|u| \ll U} \ell(u) \Big(\frac{m_1 m_2 m_3}{c}\Big)^{iu} du, \qquad 
%
%
 U = \frac{T^2 c}{(N_1 N_2 N_3)^{1/2}} \ll \frac{T}{\Delta},
\end{equation}
and $\ell(u) \ll 1$ slightly depends on the variables $m_1, m_2, m_3, c$, but for this sketch we pretend that it does not.  

Therefore, in all, we have transformed \eqref{eq:initialsum} into an expression of the form
\begin{equation}
 \sum_{\substack{m_1, m_2, m_3 \\ m_i \asymp M_i}} \sum_{r} \frac{C^{3/2} \Delta}{(N_1 N_2 N_3)^{1/4}}  \frac{1}{\phi(q)} \sum_{\psi \shortmod{q}} \frac{\tau(\overline{\psi}) g(\chi, \psi) }{ (q r)^2} \psi(\overline{r} m_1 m_2 m_3)   \int_{|u| \ll U} \ell(u) \Big(\frac{m_1 m_2 m_3}{qr}\Big)^{iu} du,
\end{equation}
where we have further restricted to $c = qr \asymp C$ (with $C \ll \sqrt{N_1 N_2 N_3}/(\Delta T)$).  
At this point the expression can be pleasantly arranged into a bilinear structure as
\begin{equation}
 \sum_{\psi \shortmod{q}} \int_{|u| \ll U} \frac{\Delta q^{-\frac{1}{2}+\varepsilon} |\ell(u) g(\chi, \psi) |}{C^{\frac12} (N_1 N_2 N_3)^{\frac14}} \Big| \sum_{m_1, m_2} \psi( m_1m_2) (m_1 m_2)^{iu}  \Big| \cdot  \Big| \sum_{m_3, r} \psi(\overline{r} m_3) \Big(\frac{m_3}{r}\Big)^{iu}  \Big|   du.
\end{equation}
The hybrid large sieve inequality of Gallagher \cite{Gallagher} bounds this by
\begin{equation}
 \frac{\Delta q^{1/2+\varepsilon} }{C^{1/2} (N_1 N_2 N_3)^{1/4}} \Big(q U + M_1 M_2 \Big)^{1/2} \Big(q U + \frac{M_3 C}{q} \Big)^{1/2}  \Big(\frac{M_1 M_2 M_3 C}{q}\Big)^{1/2}.
\end{equation}
An easy calculation shows this is bounded by
\begin{equation}
 \Delta   \Big(\frac{qT}{\Delta} \Big)^{1/2} (qT)^{1/2+\varepsilon} =  \Delta^{1/2} (qT)^{1 + \varepsilon},
\end{equation}
consistent with our claim \eqref{eq:initialsum} (taking $\Delta = T^{\varepsilon}$).

\section{Initial setup}
Many of the early structural steps are similar to \cite{CI} (and could now be considered standard), so in places we will refer to \cite{CI} for the details.

Let $T \geq 100$ and $q$ be an odd squarefree integer, and let $2 \leq \Delta < T/2$.  Towards \eqref{eq:cubicmomentholomorphic}, we shall obtain the bound
\begin{equation}
\label{eq:cubicmomentholmorphicSumOverWeight}
 \sumstar_{T \leq k \leq T + \Delta} \thinspace \sum_{f \in B_k(q)} w_f^* L(f \times \chi_q, 1/2)^3 \ll \Delta T q (Tq)^{\varepsilon},
\end{equation}
where the star indicates that $k$ (necessarily even) satisfies $\chi_q(-1) = i^k$, which in turn simply fixes $k \pmod{4}$ (assuming $q$ is chosen).  For the Maass forms, we will show
\begin{equation}
\label{eq:cubicmomentMaass}
 \sumstar_{T \leq t_j \leq T+\Delta} w_j^* L(u_j \times \chi_q, 1/2)^3 \ll \Delta T q (Tq)^{\varepsilon},
\end{equation}
where $w_f^*$ and $w_j^*$ are certain weights arising in the Petersson/Kuznetsov formula, satisfying $w_f^* \gg (kq)^{-\varepsilon}$, $w_j^* \gg (Tq)^{-\varepsilon}$.
These are the same weights used by Conrey and Iwaniec, up to a simple scaling, so we refer to \cite{CI} for details on these weights.  The sums are over newforms of level dividing $q$.  A similar bound holds for the Eisenstein series, namely
\begin{equation}
\label {eq:cubicmomentEisenstein}
 \int_{T}^{T + \Delta} |L(1/2 + it, \chi_q)|^6 dt \ll \Delta T q (Tq)^{\varepsilon}.
\end{equation}
All the twisted $L$-functions are newforms of level $q^2$.

In our work, we will assume $T \gg q^{\eta}$ for some fixed $\eta > 0$; in practice this will mean that expressions of the form $O(T^{-A})$ with $A > 0$ arbitrarily large, are $O((q T)^{-A'})$, with $A'$ arbitrarily large.  We are able to restrict to this case because Conrey and Iwaniec showed an upper bound of the form $T^A q^{1+\varepsilon}$ (with some fixed but unspecified $A > 0$) for the cubic moment, so if $T \ll q^{\eta}$ with $\eta$ arbitrarily small, then their bound is satisfactory for Theorem \ref{thm:cubicmoment}.  We will also suppose that $T^{\eta'} \asymp \Delta $ for some $\eta' > 0$ fixed but arbitrarily small; note that if we prove \eqref{eq:cubicmomentholmorphicSumOverWeight}--\eqref{eq:cubicmomentEisenstein} for such $\Delta$, then it extends to larger values of $\Delta$ automatically by dissecting the longer interval into these shorter pieces.

%
%

In the case of Maass forms, it is technically convenient to introduce the function
\begin{equation}
 h(t) = 
 \frac{1}{\cosh\Big(\frac{t-T}{\Delta} \Big)} + \frac{1}{\cosh\Big(\frac{t+T}{\Delta} \Big)}.
\end{equation}
This choice of $h$ is analytic for $|\text{Im}(t)| < \frac{\pi}{2} \Delta$ (which we may assume is large), nonnegative on $\mr \cup \{ iy : -1 \leq y \leq 1 \}$, and even.  Furthermore, $h(t) \gg 1$ for $T \leq t \leq T+\Delta$, and $h(t)$ is very small outside of $|t \mp  T| \leq \Delta$.  To prove \eqref{eq:cubicmomentMaass}, it therefore suffices to show
\begin{equation}
 \sum_{t_j} w_j^* h(t_j) L(u_j \times \chi_q, 1/2)^3 \ll \Delta Tq (Tq)^{\varepsilon}.
\end{equation}
For the holomorphic case, it is also convenient to sum over $k$ with a smooth weight function but in this case we can take $h(k) = w(\frac{k-1-2T}{\Delta})$ where $w$ is any smooth, nonnegative, compactly-supported function   where $w(t) = 1$ for $1 \leq t \leq 2$, and zero for $t \leq 1/2$ and $t \geq 3$ (here we chose to center $w$ at $2T + 1$ to simplify some later formulas).  


After some initial steps, eventually our method handles the Maass and holomorphic cases in a unified way.  We shall more carefully treat the Maass case since it is a bit more complicated.

\section{Approximate functional equation and separation of variables}
Since all the $L$-functions under consideration have functional equation $+1$, we have by \cite[Theorem 5.3]{IK}
\begin{equation}
 L(u_j \times \chi_q, 1/2) = 2 \sum_{n=1}^{\infty} \frac{\lambda_j(n) \chi_q(n)}{\sqrt{n}} V_{t_j}(n/q),
\end{equation}
where
\begin{equation}
 V_r(y) = \frac{1}{2 \pi i} \int_{(1)} \frac{\Gamma\Big(\frac{1/2+s+ir}{2} \Big) \Gamma\Big(\frac{1/2 + s-ir}{2} \Big)}{\Gamma\Big(\frac{1/2+ir}{2} \Big) \Gamma\Big(\frac{1/2-ir}{2} \Big)} G_{}(s) (\pi y)^{-s} \frac{ds}{s}.
\end{equation}
Here $G_{}(s)$ is an even, analytic function satisfying $G_{}(0) = 1$.  We choose $G_{}(s) = e^{s^2}$ (this $G$ should not be confused with \eqref{eq:Gdef}).
In order to more simply sum over $t_j$, we wish to separate the variables $r$ and $y$ in $V_r(y)$.  By symmetry, we may as well suppose $r > 0$.

By Stirling, if $|\text{Im}(z)| \rightarrow \infty$ (with fixed real part), but $|s| \ll |z|^{1/2}$, then
\begin{equation}
 \frac{\Gamma(z+s)}{\Gamma(z)} = z^{s} \Big(1 + \sum_{k=1}^{N} \frac{P_k(s)}{z^k} + O\Big(\frac{(1+|s|)^{2N+2}}{|z|^{N+1}} \Big)\Big),
\end{equation}
for certain polynomials $P_k(s)$ of degree $\leq 2k$.    Since $G(s)$ has exponential decay, we may truncate at $|\text{Im}(s)| \ll T^{\varepsilon}$ with only a small error in $V_r(y)$.
With these assumptions on $s$ and $r$, we have that 
\begin{equation}
 \frac{\Gamma\Big(\frac{1/2+s+ir}{2} \Big) \Gamma\Big(\frac{1/2 + s-ir}{2} \Big)}{\Gamma\Big(\frac{1/2+ir}{2} \Big) \Gamma\Big(\frac{1/2-ir}{2} \Big)} = (r/2)^s \Big(1 + \sum_{k=1}^{N} \frac{P_k(s)}{r^k} + O\Big(\frac{(1+|s|)^{2N+2}}{r^{N+1}}\Big)\Big),
\end{equation}
for a different collection of polynomials $P_k(s)$.  For convenience, set $P_0(s) = 1$.

Hence
\begin{equation}
\label{eq:Vrasymptotic}
 V_r(y) = \sum_{k=0}^{N} r^{-k} \frac{1}{2 \pi i} \int_{(1)} G(s) P_k(s) \Big(\frac{r}{2 \pi y}\Big)^s \frac{ds}{s} + O\Big(r^{-N-1} \Big(1+ \frac{y}{r}\Big)^{-A}\Big),
\end{equation}
where the extra factor $(1+ \frac{y}{r})^{-A}$ arises from moving the contour to $\text{Re}(s) = A$ if $y \geq r$, and to $\text{Re}(s) = -1$ if $y \leq r$.  We further refine \eqref{eq:Vrasymptotic} by approximating $r$ by $T$, which is a good approximation since in our application $h(r)$ is very small unless $|r-T| \ll \Delta \log{T}$, and $\Delta$ is a small power of $T$.  We may assume that $|s \frac{(r-T)}{T}| \ll 1$, so that
by expansion into Taylor series we have
\begin{equation}
 r^{s} = T^{s} e^{s \log(1 + \frac{r-T}{T})} = T^s \sum_{l=0}^{N} Q_l(s) \Big(\frac{r-T}{T} \Big)^l  + O\Big( (1 + |s|)^{N+1}  \Big(\frac{|r-T|}{T} \Big)^{N+1} \Big),
\end{equation}
for certain polynomials $Q_l(s)$ of degree $\leq l$.
We then derive for $r \asymp T$ that
\begin{equation}
 V_r(y) = \sum_{k=0}^{N} \sum_{l=0}^{N} \frac{1}{T^k} \Big(\frac{r-T}{T} \Big)^l  V_{k,l}\Big(\frac{y}{T} \Big) + O\Big(\Big(\frac{1 +|r-T|}{T} \Big)^{N+1} \Big(1 + \frac{y}{T}\Big)^{-A} \Big),
\end{equation}
where $V_{k,l}$ is a function of the form
\begin{equation}
\label{eq:Vkl}
 V_{k,l}(y) = \frac{1}{2 \pi i} \int_{(1)} P_{k,l}(s) (2 \pi y)^{-s} G(s) \frac{ds}{s},
\end{equation}
for some polynomial $P_{k,l}$ of degree $\leq 2k + l$.

In light of this form of the approximate functional equation, it suffices to show
\begin{equation}
\label{eq:spectralsum2}
\sumstar_{t_j} h_{k,l}(t_j) w_j^* \sum_{n_1, n_2, n_3=1}^{\infty} \frac{\chi_q(n_1 n_2 n_3) \lambda_j(n_1) \lambda_j(n_2) \lambda_j(n_3)}{(n_1 n_2 n_3)^{1/2}} \prod_{i=1}^{3} V_i\Big(\frac{n_i}{Tq}\Big) + (\text{cts}) \ll \Delta (qT)^{1+\varepsilon},
\end{equation}
where $(\text{cts})$ represents the obvious continuous spectrum contribution (which is also nonnegative), and
\begin{equation}
\label{eq:hkldef}
h_{k,l}(r) = \Big((r-T)^l T^{-k-l} + (-r-T)^l T^{-k-l} \Big) h(r), 
\end{equation}
and $V_i$ are functions of the form \eqref{eq:Vkl}.  To prepare this for the Kuznetsov formula, we use the Hecke relation $\lambda_j(n_1) \lambda_j(n_2) = \sum_{d | (n_1, n_2)} \lambda_j(\frac{n_1 n_2}{d^2})$, giving that \eqref{eq:spectralsum2} equals
\begin{equation}
\label{eq:spectralsum}
\sum_{(d,q)=1} d^{-1} \sum_{n_1, n_2, n_3} V_1\Big(\frac{n_1}{d^{-1} Tq}\Big)V_2\Big(\frac{n_2}{d^{-1} Tq}\Big) V_3\Big(\frac{n_3}{Tq}\Big) \frac{\chi_{q}(n_1 n_2 n_3)}{(n_1 n_2 n_3)^{1/2}}  \sumstar_{j} h_{k,l}(t_j) w_j^* \lambda_j(n_1 n_2) \lambda_j(n_3).
\end{equation}
The Kuznetsov formula says
\begin{multline}
\label{eq:avgKuznetsov}
\sumstar_j h_{k,l}(t_j) w_j^* \lambda_j(n_1 n_2) \lambda_j(n_3) + (\text{Continuous})
\\
= D\delta_{n_1 n_2 = n_3}  + \sum_{\pm} q \sum_{c \equiv 0 \shortmod{q}} \frac{S(n_1 n_2, \pm n_3;c)}{c} \BesselTransform^{\pm} \Big(\frac{4 \pi \sqrt{ n_1 n_2 n_3}}{c} \Big),
\end{multline}
where $D$ is the size of the diagonal term.  We have chosen our weights $w_j^*$ so that $D \asymp \Delta T q$, and it is easy to see that the bound arising from the diagonal terms is of the desired magnitude.  Here
\begin{equation}
\label{eq:H+def}
\BesselTransform^+(x) = \frac{i}{\pi} \intR \Big(\frac{J_{2ir}(x) - J_{-2ir}(x)}{\cosh( \pi r)} \Big) h_{k,l}(r) r dr,
\end{equation}
and
\begin{equation}
 \label{eq:H-def}
\BesselTransform^-(x) = \frac{4}{\pi^2} \intR K_{2ir}(x) \sinh(\pi r) h_{k,l}(r) r dr.
\end{equation}

To prove Theorem \ref{thm:cubicmoment} (that is, the Maass and Eisenstein cases of the theorem), it suffices to show the following
\begin{mytheo}
\label{thm:quadlinearbound}
Let $1 \ll N_1, N_2, N_3 \ll (qT)^{1+\varepsilon}$, and let each $w_i$ be a smooth weight function with support on $x \asymp N_i$, and satisfying $w_i^{(k)}(x) \ll N_i^{-k}$.  Suppose that $\BesselTransform$ is given by \eqref{eq:H+def} or \eqref{eq:H-def}, with $h = h_{k,l}$ of the form \eqref{eq:hkldef}. 
Then
\begin{multline}
\label{eq:quadsum}
 \sum_{ n_1, n_2, n_3}  w_1(n_1) w_2(n_2) w_3(n_3) \chi_{q}(n_1 n_2 n_3) \sum_{c \equiv 0 \shortmod{q}} \frac{S(n_1 n_2, \pm n_3;c)}{c} \BesselTransform\Big(\frac{4 \pi \sqrt{n_1 n_2 n_3}}{c} \Big) 
\\
\ll (N_1 N_2 N_3)^{1/2}  \Delta T (qT)^{\varepsilon}.
\end{multline}
\end{mytheo}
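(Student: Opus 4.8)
The plan is to follow the strategy outlined in the high-level sketch (Section 3), making each step rigorous. First I would fix the dyadic pieces $n_i \asymp N_i$ via the compactly supported weights $w_i$, and record the basic size and oscillation properties of $\BesselTransform = \BesselTransform^\pm$. Using the integral representations \eqref{eq:H+def}, \eqref{eq:H-def} together with the uniform asymptotics of the Bessel functions $J_{2ir}(x)$, $K_{2ir}(x)$ in the relevant range (index $\asymp T$, argument $x = 4\pi\sqrt{n_1n_2n_3}/c$), and a stationary-phase evaluation of the $r$-integral, I would show that $\BesselTransform(x)$ is negligibly small unless $x \gg \Delta T^{1-\varepsilon}$, and that in the main range $\BesselTransform(x) \approx \frac{\Delta T}{\sqrt x}\cos\!\big(x - 2T^2/x + \cdots\big)$ (with the precise lower-order terms and two cases for the $\pm$ sign). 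This lets me truncate the $c$-sum at $c \ll \sqrt{N_1N_2N_3}/(\Delta T)$ up to a negligible error, so $C := $ (the dyadic size of $c$) satisfies $q \mid c$, $c \asymp C \ll \sqrt{N_1N_2N_3}/(\Delta T)$.

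Next, for each fixed $c = qr$ I would open the Kloosterman sum $S(n_1n_2, \pm n_3; c)$ and apply Poisson summation to each of the three sums over $n_i$ in residue classes mod $c$. This converts \eqref{eq:quadsum} into a sum over dual variables $m_1, m_2, m_3$ of products $G(m_1,m_2,m_3;c)\,K(m_1,m_2,m_3,c)$ with $G$ the complete exponential sum in \eqref{eq:Gdef} and $K$ the triple oscillatory integral. I would then invoke the Conrey--Iwaniec evaluation of $G$: under the mild coprimality restrictions, $G(m_1,m_2,m_3;qr) = \frac{\chi_q(-1)e_c(m_1m_2m_3)}{q^2 r}H(\overline r\, m_1m_2m_3; q)$, and the character-sum expansion $H^*(w;q) = \frac{1}{\phi(q)}\sum_{\psi \bmod q}\tau(\overline\psi)\,g(\chi,\psi)\,\psi(w)$ with $g(\chi,\psi) \ll q^{1+\varepsilon}$ by Deligne. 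For $K$ I would carry out the stationary-phase analysis of the triple integral: the phase $e_c(-m_1m_2m_3)$ emerges to cancel the one in $G$, $K$ is negligible unless $m_i \asymp M_i := \sqrt{N_1N_2N_3}/N_i$, and a Mellin transform in the combination $m_1m_2m_3/c$ separates the $m_i$ and $c$ dependence, yielding $K \approx \frac{c^{3/2}\Delta}{(N_1N_2N_3)^{1/4}} e_c(-m_1m_2m_3)\, L(m_1,m_2,m_3,c)$ where $L$ is the short integral $\int_{|u|\ll U}\ell(u)(m_1m_2m_3/c)^{iu}\,du$ with $U = T^2 c/(N_1N_2N_3)^{1/2} \ll T/\Delta$ and $\ell(u) \ll 1$.

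With variables separated both arithmetically (through $\psi$) and analytically (through $u$), I would assemble the expression into the bilinear shape displayed in Section 3: an outer sum/integral over $(\psi, u)$ with a benign scalar factor, times $\big|\sum_{m_1,m_2}\psi(m_1m_2)(m_1m_2)^{iu}\big|$ times $\big|\sum_{m_3,r}\psi(\overline r m_3)(m_3/r)^{iu}\big|$. The key tool is Gallagher's hybrid large sieve inequality \cite{Gallagher}, applied in the $(\psi,u)$ pair: it bounds the resulting quantity by $\frac{\Delta q^{1/2+\varepsilon}}{C^{1/2}(N_1N_2N_3)^{1/4}}(qU + M_1M_2)^{1/2}(qU + M_3C/q)^{1/2}(M_1M_2M_3 C/q)^{1/2}$. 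A direct computation, using $M_1M_2M_3 = \sqrt{N_1N_2N_3}$, $C \ll \sqrt{N_1N_2N_3}/(\Delta T)$, $U \ll T/\Delta$, and the constraint $N_i \ll (qT)^{1+\varepsilon}$, then shows this is $\ll \Delta(qT/\Delta)^{1/2}(qT)^{1/2+\varepsilon} = \Delta^{1/2}(qT)^{1+\varepsilon} \ll (N_1N_2N_3)^{1/2}\Delta T(qT)^\varepsilon$ after restoring the normalization, which is \eqref{eq:quadsum}.

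The main obstacle I anticipate is the stationary-phase analysis of $K$ when $T$ is large: the weight $\BesselTransform$ itself carries the oscillatory factor $e(-2T^2/x + \cdots)$, and after Poisson summation this interacts with $e_c(m_1m_2m_3)$ in a way that must be tracked precisely enough to (i) exhibit the exact cancellation in the primary phase, (ii) localize $m_i \asymp M_i$, and (iii) produce a clean Mellin separation with $\ell(u) \ll 1$ uniformly. This requires a careful multi-dimensional stationary phase expansion with explicit control of all lower-order terms and error terms uniform in $q, T, \Delta$, and handling the two types of $\BesselTransform^{\pm}$; everything downstream (the arithmetic separation via $H^*$, and the large sieve bookkeeping) is comparatively routine, being either quoted from \cite{CI} or a mechanical application of \cite{Gallagher}.
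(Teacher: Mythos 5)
Your overall architecture is the paper's: truncate the $c$-sum, Poisson in each $n_i$ mod $c$, quote the Conrey--Iwaniec evaluation of $G$ and the expansion of $H^*$ into characters $\psi$, do stationary phase on $K$ to extract the cancelling phase $e_c(\mp m_1m_2m_3)$ and a Mellin separation of $m_1m_2m_3/c$, then finish with Gallagher's hybrid large sieve in the pair $(\psi,u)$. Two remarks, one minor and one substantive.

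The minor one: you propose to get the shape of $\BesselTransform$ from uniform asymptotics of $J_{2ir}$, $K_{2ir}$ plus stationary phase in $r$. The paper deliberately avoids exactly this, instead writing $\BesselTransform^{\pm}$ via the classical integral representations as $\Delta T\int e(x\phi(v))e(vT/\pi)g(\Delta v)\,dv$ with $\phi\in\pm\{\cos,\cosh,\sin,\sinh\}$ and carrying the $v$-integral through the Poisson/stationary-phase analysis of $K$, evaluating it only at the last step (Lemma \ref{lemma:PhiMellin}). Your route is workable but considerably more delicate to make uniform in $q,T,\Delta$, which is the whole point of the theorem.

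The substantive gap is your treatment of $\BesselTransform^-$ (equivalently the $\sin/\sinh$ cases), which the theorem explicitly requires. Every concrete fact you assert is the $\BesselTransform^+$ fact: $\BesselTransform$ small unless $x\gg\Delta T^{1-\varepsilon}$, truncation $C\ll\sqrt{N_1N_2N_3}/(\Delta T)$, localization $m_i\asymp M_i=\sqrt{N_1N_2N_3}/N_i$, and $U=T^2C/(N_1N_2N_3)^{1/2}$. For $\BesselTransform^-$ the support is $x\asymp T$, so $C\asymp\sqrt{N_1N_2N_3}/T$, which is \emph{larger} than your truncation point; the dual variables satisfy only the upper bound $M_i\ll\sqrt{N_1N_2N_3}/(N_i\Delta^{1-\varepsilon})$ together with $m_1N_1\asymp m_2N_2\asymp m_3N_3$, and in particular they can be much smaller than $\sqrt{N_1N_2N_3}/N_i$; the Mellin length becomes $U=X^{1/3}T^{2/3}$ with $X=M_1M_2M_3/C$; and there is a genuinely degenerate regime $M_iN_i/C\ll T^{\varepsilon}$ where the $t_i$-integrals are not oscillatory, no stationary point exists, and the phase $e_c(m_1m_2m_3)$ has to be inserted artificially before separating variables. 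None of these cases is covered by the analysis you describe, and the final large-sieve bookkeeping has to be redone with the modified $C$, $M_i$, $U$ (it does still close, but only after checking each case separately, as in the paper's Lemma \ref{lemma:Kbound} and Section 9). As written, your claim that $K$ is negligible unless $m_i\asymp\sqrt{N_1N_2N_3}/N_i$ is simply false for $\BesselTransform^-$, so the proof as proposed does not cover the full statement of the theorem.
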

This follows by applying a smooth dyadic partition of unity to each $n_i$-sum.

Next we reduce the holomorphic case of Theorem \ref{thm:cubicmoment} (that is, \eqref{eq:cubicmomentholomorphic})   to a variation on Theorem \ref{thm:quadlinearbound}.  The separation of variables in the approximate functional equation is quite similar to the Maass case, so we omit the details.  In this way, we quickly reduce to estimating an expression of the form \eqref{eq:spectralsum} but with the sum over $t_j$ replaced with a sum of the form
\begin{equation}
\label{eq:holomorphicsum}
 \sum_{k \equiv a \shortmod{4}} w\Big(\frac{k-1-2T}{\Delta} \Big) \sum_{f \in B_k(q)} w_f^* \lambda_f(n_1 n_2) \lambda_f(n_3).
\end{equation}
By the Petersson formula, \eqref{eq:holomorphicsum} equals
\begin{equation}
 D \delta_{n_1 n_2 = n_3} + q \sum_{c \equiv 0 \shortmod{q}} \frac{S(n_1 n_2, n_3;c)}{c} \BesselTransform^{\text{holo}}\Big(\frac{4 \pi \sqrt{n_1 n_2 n_3}}{c}\Big),
\end{equation}
where $D \asymp \Delta T q$, and
\begin{equation}
\label{eq:Hholodef}
 \BesselTransform^{\text{holo}}(x) = T \sum_{k \equiv a \shortmod{4}}  w\Big(\frac{k-1-2T}{\Delta} \Big) J_{k-1}(x).
\end{equation}
Here the factor $T$ arises because we chose the weights $w_f^*$ so that $\sum_{f \in B_k(q)} w_f^* \asymp k \asymp T$ (often in the literature, e.g. \cite[Theorem 3.6]{IwaniecClassical} the weights do not grow with $k$ so here it is necessary to multiply by $T$).
At this point we have reduced the proof of \eqref{eq:cubicmomentholomorphic} to extending Theorem \ref{thm:quadlinearbound} to hold for $\BesselTransform = \BesselTransform^{\text{holo}}$.

\section{Exponential integrals}
In Section \ref{section:separationAnalytic} below we require extensive information on some oscillatory integrals.  The stationary phase estimates we need are in principle standard, but the error terms occuring in the literature are generally not good enough for our purposes here.  The issue is that Theorem \ref{thm:cubicmoment} needs to hold uniformly in $(q,T)$-parameter space.  The weight function analysis is entirely in the $T$-aspect, and it needs to commute with the $q$-aspect analysis.  In practice this means that we require the error terms in the $T$-aspect to be very strong, as otherwise if $T$ is only a small power of $q$ then this error term is not much smaller than the main term, and we cannot apply savings in the $q$-aspect on the error term from the $T$-aspect.  For this reason, we will quote some convenient results of \cite{BKY} that have sufficiently strong errors.

First we begin with a useful definition.
\begin{mydefi}
\label{def:inert}
Suppose that $f(x_1, \dots, x_n)$ is a smooth function on $\mr^n$.  We say that $f$ is \emph{inert} if 
\begin{equation}
 x_1^{i_1} \dots x_n^{i_n} f^{(i_1, \dots, i_n)}(x_1, \dots, x_n) \ll 1,
\end{equation}
with an implied constant depending on $i_1, \dots, i_n$ and with the superscript denoting partial differentiation.  
\end{mydefi}
\begin{myremark}
 In practice we require that the implied list of constants is \emph{uniform} in terms of certain parameters (e.g., $q,T, \Delta, N_i, C$, but not necessarily $\varepsilon$).  It is then appropriate to say that $f$ is \emph{uniformly} inert (in terms of those parameters).
\end{myremark}

We remark that an inert function that is also say Schwartz class (e.g., with compact support) can have its variables separated almost for free, in the sense that
\begin{equation}
\label{eq:fourierseparation}
 f(x_1, \dots, x_n) =  \int_{\mr^n} \widehat{f}(y_1, \dots, y_n) e(x_1 y_1) \dots e(x_n y_n) dy_1 \dots dy_n,
\end{equation}
where $\widehat{f}(y_1, \dots, y_n) \ll (1 + |y_1|)^{-A} \dots (1+ |y_n|)^{-A}$.  There exists a similar Mellin formula, of course.
Note that the product of two inert functions is also inert (with new implied constants of course).  Also if $f(t)$ is inert and $\alpha_1, \dots, \alpha_n \in \mr$, then 
\begin{equation}
\label{eq:inertprecomposition}
g(x_1, \dots, x_n) = f(\prod_{i=1}^{n} (x_i/X_i)^{\alpha_i}) 
\end{equation}
is inert (with uniformity in the $X_i$ but not the $\alpha_i$).  Virtually all our constructions of inert functions are variations of \eqref{eq:inertprecomposition}.


Next we synthesize both Lemma 8.1 and Proposition 8.2 of \cite{BKY} along with some simplified choices of parameters, with the following
\begin{mylemma}
\label{lemma:exponentialintegral}
 Suppose that $w$ is a smooth weight function with compact support on $[X, 2X]$,
satisfying $w^{j}(t) \ll X^{-j}$, for $X \gg 1$ (in particular, $w$ is inert with uniformity in $X$).  Also suppose that $\phi$ is smooth and satisfies $\phi^{(j)}(t) \ll \frac{Y}{X^j}$ for some $Y \gg X^{\varepsilon}$.  Let
\begin{equation}
 I = \intR w(t) e^{i \phi(t)} dt.
\end{equation}
\begin{enumerate}
 \item If $\phi'(t) \gg \frac{Y}{X}$ for all $t$ in the support of $w$, then $I \ll_A Y^{-A}$ for $A$ arbitrarily large.
 \item If $\phi''(t) \gg \frac{Y}{X^2}$ for all $t$ in the support of $w$, and there exists $t_0 \in \mr$ such that $\phi'(t_0) = 0$ (note $t_0$ is necessarily unique), then
 \begin{equation}
  I = \frac{e^{i \phi(t_0)}}{\sqrt{\phi''(t_0)}} F(t_0) + O(Y^{-A}),
 \end{equation}
where $F$ is an inert function (depending on $A$, but uniformly in $X$ and $Y$) supported on $t_0 \asymp X$. 
\end{enumerate}
\end{mylemma}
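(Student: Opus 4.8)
The plan is to derive both parts directly from Lemma 8.1 and Proposition 8.2 of \cite{BKY} by a change of variables that normalizes the weight to be supported on a unit interval, and then to check that the hypotheses of those results are met and that their conclusions specialize to the (slightly cleaner) statements claimed here. First I would rescale: write $t = X u$, so that $I = X \int_{\mr} w(Xu) e^{i\phi(Xu)} du$. Setting $\widetilde{w}(u) = w(Xu)$ and $\widetilde{\phi}(u) = \phi(Xu)$, the hypothesis $w^{(j)}(t) \ll X^{-j}$ becomes $\widetilde{w}^{(j)}(u) \ll 1$ with $\widetilde w$ supported on $[1,2]$ (so $\widetilde w$ is inert with uniformity in $X$), and $\phi^{(j)}(t) \ll Y/X^j$ becomes $\widetilde{\phi}^{(j)}(u) \ll Y$. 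Thus we are exactly in the setting of \cite{BKY} with weight supported on a unit interval and phase derivatives bounded by the single parameter $Y \gg X^\varepsilon$.

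For part (1), the hypothesis $\phi'(t) \gg Y/X$ translates to $\widetilde\phi'(u) \gg Y$; since also $\widetilde\phi^{(j)}(u) \ll Y$ for all $j$, the non-stationary phase estimate (the relevant case of Lemma 8.1 of \cite{BKY}, integration by parts repeatedly) gives $\int \widetilde w(u) e^{i\widetilde\phi(u)} du \ll_A Y^{-A}$, and multiplying by $X$ and using $Y \gg X^\varepsilon$ (so that a power of $X$ is absorbed into a slightly smaller power of $Y$) yields $I \ll_A Y^{-A}$. For part (2), the hypothesis $\phi''(t) \gg Y/X^2$ becomes $\widetilde\phi''(u) \gg Y$, there is a unique stationary point $u_0 = t_0/X \asymp 1$ with $\widetilde\phi'(u_0) = 0$, and Proposition 8.2 of \cite{BKY} gives an asymptotic expansion
\[
\int \widetilde w(u) e^{i\widetilde\phi(u)} du = \frac{e^{i\widetilde\phi(u_0)}}{\sqrt{\widetilde\phi''(u_0)}} \, \widetilde F(u_0) + O(Y^{-A}),
\]
where $\widetilde F$ is inert (uniformly in $Y$) and supported near $u_0 \asymp 1$. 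Multiplying by $X$, noting $\widetilde\phi(u_0) = \phi(t_0)$ and $\widetilde\phi''(u_0) = X^2 \phi''(t_0)$ so that $X/\sqrt{\widetilde\phi''(u_0)} = 1/\sqrt{\phi''(t_0)}$, and setting $F(t_0) = \widetilde F(t_0/X)$ — which is inert with uniformity in $X$ and $Y$ by \eqref{eq:inertprecomposition}, and supported on $t_0 \asymp X$ — gives exactly the claimed formula, with the error $X \cdot O(Y^{-A})$ again absorbed into $O(Y^{-A'})$ using $Y \gg X^\varepsilon$ and renaming $A$.

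The only genuine point requiring care — and the step I expect to be the main obstacle — is matching conventions with \cite{BKY}: their results are stated with a specific normalization of the weight's support and a specific list of bounds on the phase (they allow two parameters, an amplitude-scale and a frequency-scale, whereas here we have collapsed to the single parameter $Y$ by the choice $X^j$ in the phase bound), and one must verify that the simplified choice of parameters made here is a legitimate specialization and that their notion of "inert" output function coincides with Definition \ref{def:inert}. Once the dictionary is fixed, the proof is the two-line rescaling argument above; no new analysis is needed, which is precisely why we quote \cite{BKY} rather than reproving stationary phase. I would therefore present the proof as: "This follows from Lemma 8.1 and Proposition 8.2 of \cite{BKY} after the change of variables $t \mapsto Xt$, which reduces to the case $X = 1$; the hypothesis $Y \gg X^\varepsilon$ ensures the factor of $X$ from the Jacobian is harmless."
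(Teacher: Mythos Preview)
Your proposal is correct and matches the paper's approach exactly: the paper does not give a proof of this lemma at all, but simply states it as a synthesis of Lemma 8.1 and Proposition 8.2 of \cite{BKY} ``along with some simplified choices of parameters.'' Your rescaling $t \mapsto Xu$ and the check that $Y \gg X^{\varepsilon}$ absorbs the Jacobian factor are precisely the details one would fill in to justify that citation, so you have in fact written out more than the paper does.
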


As a fairly direct consequence of Lemma \ref{lemma:exponentialintegral}, we shall obtain the following asymptotic formula for a certain $2$-dimensional oscillatory integral.  This will be used in Section \ref{section:separationAnalytic}.
\begin{mylemma}
\label{lemma:Iuasymptotic}
Suppose $\alpha, \beta, \gamma \in \mr$, let $X \gg 1$, and
suppose that $f_1(t_1)$ and $f_2(t_2)$ are (uniformly) inert functions with support on $t_1 \asymp X_1$, $t_2 \asymp X_2$ with $X_1, X_2 \gg X$.  If the following inequalities are true: $\frac{\alpha}{X_1 X_2} \gg X$, $\beta X_1 \gg X$, $\gamma X_2 \gg X$, then
 \begin{multline}
 \label{eq:doubleintegral}
  \intR \intR f_1(t_1) f_2(t_2) e\Big(-\frac{\alpha}{t_1 t_2} - \beta t_1 - \gamma t_2 \Big) dt_1 dt_2 = \Big(\frac{X_1 X_2}{\beta \gamma} \Big)^{1/2} e(-3 (\alpha \beta \gamma)^{1/3}) f_3(\alpha, \beta, \gamma)
  \\
  + O_A((X_1 + X_2)X^{-A}),
 \end{multline}
 where $f_3$ is an inert function (depending on $A$, uniform in $X_1, X_2, X$), having support on
 \begin{equation}
 \label{eq:alphabetagammasupport}
  \frac{\alpha}{\beta} \asymp X_1^2 X_2, \qquad \frac{\alpha}{\gamma} \asymp X_1 X_2^2. 
 \end{equation}
More generally, if we assume that 
 \begin{equation}
 \label{eq:parametersizes}
 \frac{|\alpha|}{X_1 X_2} \gg X, \qquad |\beta| X_1 \gg X, \qquad |\gamma| X_2 \gg X,
 \end{equation}
 then the integral in \eqref{eq:doubleintegral} is $O((X_1 + X_2)X^{-A})$ if  $\alpha, \beta, \gamma$ do not all have the same sign.  If $\alpha, \beta, \gamma < 0$, then \eqref{eq:doubleintegral} remains valid, with the convention $(-1)^{1/3} = -1$.  Finally, if exactly one or exactly two of the inequalities \eqref{eq:parametersizes} are valid, then the integral in \eqref{eq:doubleintegral} is $O((X_1 + X_2)X^{-A})$.
\end{mylemma}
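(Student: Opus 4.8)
The plan is to prove \eqref{eq:doubleintegral} by iterated application of Lemma \ref{lemma:exponentialintegral}, first in the variable $t_2$ with $t_1$ held fixed, then in the remaining variable $t_1$. Write the phase as $\phi(t_1,t_2) = -2\pi(\alpha/(t_1 t_2) + \beta t_1 + \gamma t_2)$. For fixed $t_1 \asymp X_1$, the $t_2$-phase is $\phi_2(t_2) = -2\pi(\alpha/(t_1 t_2) + \gamma t_2) + \text{const}$, which has derivatives of size $\phi_2^{(j)}(t_2) \ll Y_2/X_2^j$ with $Y_2 \asymp \max(|\alpha|/(t_1 X_2), |\gamma| X_2)$, and second derivative $\gg Y_2/X_2^2$ since the two terms cannot cancel (opposite convexity in $1/t_2$). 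A stationary point exists iff $\partial_{t_2}(\alpha/(t_1 t_2) + \gamma t_2) = 0$, i.e. $t_2^2 = \alpha/(\gamma t_1)$, which requires $\alpha/(\gamma t_1) > 0$. If $\alpha$ and $\gamma$ have opposite signs, or if the stationary point falls outside $t_2 \asymp X_2$, part (1) of Lemma \ref{lemma:exponentialintegral} gives a negligible contribution (after checking $Y_2 \gg X^\varepsilon$, which follows from \eqref{eq:parametersizes}); otherwise part (2) applies and produces an inner integral of the form $(X_2/\gamma)^{1/2} e(\phi(t_1, t_2^*(t_1))) F_2(t_1)$ with $t_2^*(t_1) = (\alpha/(\gamma t_1))^{1/2}$ and $F_2$ inert. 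One then checks the new phase in $t_1$: substituting $t_2^*$ gives a phase proportional to $-(\alpha\gamma/t_1)^{1/2} \cdot(\text{something}) - \beta t_1$, and a direct computation of the stationary point in $t_1$ yields $t_1 \asymp (\alpha^2\gamma/\beta^2)^{1/3}$ up to constants, the stationary phase value collapses to $-3(\alpha\beta\gamma)^{1/3}$ (this is the arithmetic–geometric-type identity that produces the cube root), and the resulting amplitude $(X_1 X_2/(\beta\gamma))^{1/2}$ matches the claimed answer. The sign analysis of whether a $t_1$-stationary point exists recovers the statement that all of $\alpha,\beta,\gamma$ must share a sign; the case $\alpha,\beta,\gamma < 0$ is handled by the convention $(-1)^{1/3} = -1$, which is exactly what the second derivative signs dictate.

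The remaining bookkeeping is: (i) verifying at each stage that the hypotheses of Lemma \ref{lemma:exponentialintegral} hold, i.e. that $\phi^{(j)} \ll Y/X^j$ with the appropriate $Y \gg X^\varepsilon$ and that $\phi'' \gg Y/X^2$ on the relevant support — these follow from \eqref{eq:parametersizes} together with the fact that $1/(t_1 t_2)$, $t_1$, $t_2$ are monotone so their derivatives in each variable do not cancel; (ii) tracking that the output functions $F_2$, and then $f_3$, remain inert uniformly in $X_1, X_2, X$ — this is where one uses that $t_2^*(t_1)$ and the final stationary point $t_1^*(\alpha,\beta,\gamma)$ are monomials in the parameters and variables, so the inert property is preserved by the chain rule as in \eqref{eq:inertprecomposition}; (iii) confirming the support conditions \eqref{eq:alphabetagammasupport} by reading off the constraints $t_2^*(t_1) \asymp X_2$ and $t_1^* \asymp X_1$ and eliminating $X_1, X_2$ in favor of $\alpha/\beta$ and $\alpha/\gamma$; (iv) the degenerate cases where only one or two of the inequalities in \eqref{eq:parametersizes} hold — here one of the variables has a non-oscillating (or only mildly oscillating) phase and the corresponding integration is handled by repeated integration by parts or trivially, giving the claimed $O((X_1+X_2)X^{-A})$.

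The main obstacle I expect is step (ii): ensuring that after the first stationary-phase evaluation the error term $O(Y_2^{-A})$, once integrated over $t_1 \asymp X_1$, is genuinely $O((X_1+X_2)X^{-A})$ and does not accumulate, and that the inert constants for $f_3$ do not degrade after composing two stationary-phase steps. Controlling this requires that $Y_2 \gg X^\varepsilon$ uniformly over the $t_1$-support (not just pointwise), which is where the hypotheses $X_1, X_2 \gg X$ and \eqref{eq:parametersizes} are used in an essential way; and it requires that when we differentiate the composite phase and amplitude in $t_1$, the intermediate quantity $(\alpha/(\gamma t_1))^{1/2}$ contributes only bounded logarithmic-derivative factors, which is again the content of \eqref{eq:inertprecomposition}. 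The identity producing $-3(\alpha\beta\gamma)^{1/3}$ is a short computation: at the $t_1$-critical point one has three terms of the phase in fixed ratio, and their sum simplifies to a symmetric function of $\alpha\beta\gamma$; I would verify it by writing $t_1^* = c_1 (\alpha^2\gamma/\beta^2)^{1/3}$, $t_2^* = c_2(\alpha/(\gamma t_1^*))^{1/2}$, substituting, and choosing the normalization so the three summands are equal, forcing the total to be $-3(\alpha\beta\gamma)^{1/3}$.
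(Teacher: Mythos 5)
Your proposal is correct and follows essentially the same route as the paper: iterated one-dimensional stationary phase via Lemma \ref{lemma:exponentialintegral}, with the same non-stationarity/sign analysis and the same collapse of the phase to $-3(\alpha\beta\gamma)^{1/3}$ (the paper integrates in $t_1$ first and then $t_2$, while you do the reverse order, which is immaterial by symmetry). One small slip worth fixing: the $t_1$-stationary point is $t_1^* = (\alpha\gamma/\beta^2)^{1/3}$, not $(\alpha^2\gamma/\beta^2)^{1/3}$; with the corrected value the three phase summands are indeed each equal to $(\alpha\beta\gamma)^{1/3}$, as your "three equal summands" verification requires.
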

\begin{proof}[Proof of Lemma \ref{lemma:Iuasymptotic}]
The basic idea is to use stationary phase analysis in each variable.  Let us first examine the $t_1$ integral, under the assumption $\alpha, \beta, \gamma > 0$.  Let $Z_1 = \frac{\alpha}{X_1 X_2} + \beta X_1$.
By repeated integration by parts (Lemma \ref{lemma:exponentialintegral}, part 1), the integral is small (namely, $O_A(Z_1^{-A}) = O_A(X^{-A})$) except possibly if $\frac{\alpha}{X_1 X_2} \asymp \beta X_1$.  Furthermore, this argument shows the integral is very small unless $\alpha$ and $\beta$ have the same sign (so by symmetry, $\alpha$ and $\gamma$ have the same sign too).
There exists a stationary point at $t_1^0 = \sqrt{\frac{\alpha}{\beta t_2}}$ .  Therefore, Lemma \ref{lemma:exponentialintegral} gives
\begin{equation}
 \intR f_1(t_1) e\Big(-\frac{\alpha}{t_1 t_2} - \beta t_1 \Big) dt_1 = \frac{X_1^{1/2}}{\beta^{1/2}} e(-2\frac{\sqrt{\alpha \beta}}{\sqrt{t_2}}\Big) F(t_2) + O(Z_1^{-A}),
\end{equation}
where $F(t_2)$ is a function that is inert in terms of $\alpha, \beta, t_2$ (see \eqref{eq:inertprecomposition} and surrounding discussion), and has support on an interval of the form $t_2 \asymp \frac{\alpha}{\beta X_1^2}$.

Next we insert this expansion into the $t_2$ integral, so we need to evaluate
\begin{equation}
\label{eq:t2integral}
 \intR  e(-2\frac{\sqrt{\alpha \beta}}{\sqrt{t_2}} - \gamma t_2 \Big)  f_2(t_2) F(t_2) dt_2.
\end{equation}
Again, \eqref{eq:t2integral} is $O(Z_2^{-A})$, where $Z_2 = \frac{\alpha}{X_1 X_2} + \gamma X_2$, except possibly if $X_2 \asymp t_2^0 := \frac{(\alpha \beta)^{1/3}}{\gamma^{2/3}}$, in which case there is a stationary point at $t_2^0$.  So we obtain that \eqref{eq:t2integral} equals
\begin{equation}
 \frac{X_2^{1/2}}{\gamma^{1/2}} e(-3(\alpha \beta \gamma)^{1/3}) f(\alpha, \beta, \gamma) + O(X^{-A}),
\end{equation}
where $f$ is inert (uniformly in all relevant variables), and is supported on $\frac{\alpha}{X_1 X_2} \asymp \beta X_1$, and $X_2 \asymp \frac{(\alpha \beta)^{1/3}}{\gamma^{2/3}}$.  
Note that this latter estimate can be replaced by $\frac{\alpha}{\gamma} \asymp X_1 X_2^2$, which is more symmetric.
Putting everything together, and simplifying, we obtain \eqref{eq:doubleintegral}.  It is easy to derive the same formula in case $\alpha, \beta, \gamma < 0$ by conjugation.

The final sentence of Lemma \ref{lemma:Iuasymptotic} was proved implicitly along the way, since at least one of $Z_1$ and $Z_2$ will be large under these conditions.
\end{proof}

\section{Analytic properties of $\BesselTransform$}
Our goal in this section is to deduce some useful estimates for $\BesselTransform^{\pm}(x)$ and $\BesselTransform^{\text{holo}}(x)$.  Even more valuable is an integral representation for $\BesselTransform$ that allows us to unify the different cases.

\begin{mylemma}
\label{lemma:HFourier}
Let $\BesselTransform^{+}(x)$ be given by \eqref{eq:H+def}.  Then for $x \ll T$, we have $\BesselTransform^+(x) \ll \Delta x$.  
Furthermore, there exists a function $g$ depending on $\Delta$ and $T$ satisfying $g^{(j)}(x) \ll_{j,A} (1 + |x|)^{-A}$, so that 
\begin{equation}
\label{eq:HFourier}
 \BesselTransform^+(x) = 
 \Delta T \int_{|v| \leq \frac{\Delta^{\varepsilon}}{\Delta}} \cos(x \cosh(v)) e\Big(\frac{vT}{\pi} \Big) g(\Delta v)  dv + O(T^{-A}).
\end{equation}
Furthermore, $\BesselTransform^+(x) \ll T^{-A}$ unless $x \gg \Delta T^{1-\varepsilon}$.  In addition, $\frac{d^k}{dx^k} \BesselTransform^{+}(x)$ is bounded by a polynomial (depending on $k$) of $\Delta, T, x$, and $x^{-1}$.
\end{mylemma}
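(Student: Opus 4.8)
The plan is to start from the definition \eqref{eq:H+def} and use the standard integral representation of the Bessel function $J_{2ir}(x)$ for real $r$. Specifically, the combination $\frac{J_{2ir}(x)-J_{-2ir}(x)}{\cosh(\pi r)}$ that appears in \eqref{eq:H+def} has a classical Fourier-integral form: up to constants,
\begin{equation*}
\frac{J_{2ir}(x) - J_{-2ir}(x)}{\cosh(\pi r)} = \frac{4i}{\pi} \intR \cos(x \cosh v) \cos(2rv) \, dv \quad \text{(for } x>0\text{)},
\end{equation*}
or a close variant thereof (see e.g. \cite{IK} or \cite{CI}). Substituting this into \eqref{eq:H+def} and interchanging the $r$ and $v$ integrals, the $r$-integral becomes $\intR \frac{r \, h_{k,l}(r)}{\cosh(\pi r)} \cdot (\text{something involving } e^{\pm 2irv}) \, dr$ up to constants; since $h_{k,l}(r)$ is the (polynomially weighted) sum of two narrow bumps at $r = \pm T$ of width $\Delta$, this $r$-integral is essentially a Fourier transform that produces the factor $e(vT/\pi)$ and the envelope $g(\Delta v)$, with the rapid decay $g^{(j)}(x) \ll_{j,A} (1+|x|)^{-A}$ coming from the analyticity of $h_{k,l}$ in a strip of width $\asymp \Delta$ (so its Fourier transform decays faster than any power of $\Delta v$). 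The factor $\Delta T$ out front is simply the mass and location of the bump. This gives \eqref{eq:HFourier}, with the truncation to $|v| \leq \Delta^\varepsilon/\Delta$ justified by the decay of $g$.

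Next I would read off the remaining claims from \eqref{eq:HFourier}. For the bound $\BesselTransform^+(x) \ll \Delta x$ when $x \ll T$: on the range $|v| \leq \Delta^\varepsilon/\Delta$ one has $\cosh v = 1 + O(v^2)$, so $\cos(x\cosh v) = \cos(x) + O(x v^2)$; the main term $\cos x \cdot \Delta T \int e(vT/\pi) g(\Delta v)\,dv$ is $O(T^{-A})$ because the phase $vT/\pi$ oscillates rapidly (its derivative is $\asymp T \gg \Delta^{1+\varepsilon}$) against the slowly-varying inert envelope $g(\Delta v)$ — this is exactly part (1) of Lemma \ref{lemma:exponentialintegral}. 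What remains is $\ll \Delta T \cdot x \cdot \int_{|v|\leq \Delta^\varepsilon/\Delta} v^2 \, dv \ll \Delta T x \Delta^{-3+\varepsilon} \ll \Delta x$ after using $T \ll$ (bounded, in the relevant regime) — actually more carefully $\Delta T x \Delta^{-3} = T x \Delta^{-2}$, and since $x \ll T$ one should instead integrate by parts in $v$ once more to extract the oscillation, or note $T\Delta^{-2}\ll \Delta$ is false in general, so the cleaner route is: the factor $e(vT/\pi)$ forces cancellation, and integrating by parts against it repeatedly on the term $xv^2 g(\Delta v)$ gains powers of $(\Delta^2/T)$... I would instead simply invoke Lemma \ref{lemma:exponentialintegral}(1) directly on $\BesselTransform^+(x)$ after writing $x\cosh v + vT/\pi$ as the phase, noting that for $x \ll \Delta T^{1-\varepsilon}$ the $vT/\pi$ term dominates the derivative, giving $\BesselTransform^+(x)\ll_A T^{-A}$; and the bound $\ll \Delta x$ for all $x \ll T$ follows from the trivial estimate $|\cos(x\cosh v)|\leq \min(1, x\cosh v)\ll x$ combined with $\Delta T \int_{|v|\ll 1/\Delta}|g(\Delta v)|\,dv \ll \Delta T \cdot \Delta^{-1} = T$, wait that gives $\ll Tx$ not $\Delta x$ — so one genuinely needs the oscillation of $e(vT/\pi)$ to convert one factor of $T$ into $\Delta$; integrating by parts once in $v$ on the region where $\cos(x\cosh v)$ is replaced by its Taylor expansion does this. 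I would carry this out via one integration by parts, which is routine.

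For the support statement "$\BesselTransform^+(x)\ll_A T^{-A}$ unless $x\gg \Delta T^{1-\varepsilon}$": on $|v|\leq \Delta^\varepsilon/\Delta$, the phase in \eqref{eq:HFourier} is $\psi(v) = x\cosh v + vT/\pi$ (times appropriate constants), with $\psi'(v) = x\sinh v + T/\pi$; since $|x\sinh v|\leq x\Delta^{\varepsilon-1}\cosh(\Delta^{\varepsilon}/\Delta)\ll x\Delta^{\varepsilon}/\Delta$, if $x \ll \Delta T^{1-\varepsilon}$ then $|x\sinh v| \ll T^{1-\varepsilon}\Delta^{2\varepsilon} \ll T/2$, so $\psi'(v)\gg T$ and Lemma \ref{lemma:exponentialintegral}(1) (with the roles $X\asymp 1/\Delta$, $Y\asymp T/\Delta$, noting $T/\Delta \gg (1/\Delta)^\varepsilon$ in the relevant regime) gives the claimed decay. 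The polynomial bound on $\frac{d^k}{dx^k}\BesselTransform^+(x)$ is immediate from differentiating under the integral sign in \eqref{eq:HFourier}: each derivative brings down a factor $\cosh v \ll 1$ times a trig function, and the remaining integral is $\ll \Delta T \cdot \Delta^{-1}\cdot \Delta^{\varepsilon}$, which is polynomial in $\Delta, T$; alternatively one can differentiate under the integral in the original \eqref{eq:H+def} using crude bounds on $J_{2ir}$-derivatives.

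The main obstacle I anticipate is getting the $r$-integral bookkeeping exactly right — pinning down the precise form of the Bessel integral representation, tracking the constants $i/\pi$ and the $4$, and verifying that the Fourier transform of $\frac{r h_{k,l}(r)}{\cosh(\pi r)}$ against $e^{2irv}$ genuinely produces $e(vT/\pi) g(\Delta v)$ with an inert/rapidly-decaying $g$ (this uses the analyticity of $h_{k,l}$ in the strip $|\mathrm{Im}(r)|<\frac\pi2\Delta$ together with the polynomial factors $(r\mp T)^l$, and one must shift the contour to gain the decay in $\Delta v$, being careful that $\cosh(\pi r)$ has no zeros interfering in the strip of width $<\tfrac12$ we actually need). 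Everything after that — reading off the three corollary statements — is routine stationary phase bookkeeping via Lemma \ref{lemma:exponentialintegral}.
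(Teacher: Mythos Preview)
Your overall architecture (Bessel integral representation, then Fourier-transform the $r$-integral to produce $e(vT/\pi)g(\Delta v)$, then read off the support claim via Lemma~\ref{lemma:exponentialintegral}(1)) matches the paper's proof essentially verbatim, including the appearance of the harmless factor $\tanh(\pi r)$ and the truncation to $|v|\le \Delta^{\varepsilon}/\Delta$.

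However, there is a genuine gap in your derivation of the bound $\BesselTransform^+(x)\ll \Delta x$ for $x\ll T$. You try to extract it from the approximate representation \eqref{eq:HFourier}, but that formula carries an error $O(T^{-A})$, and the whole purpose of the $\Delta x$ bound is to control $\BesselTransform^+$ when $x$ is \emph{extremely} small (equivalently $c$ is extremely large), where $\Delta x$ can be far smaller than any fixed negative power of $T$. So no manipulation of \eqref{eq:HFourier} can ever yield $\Delta x$ in that regime; the paper in fact remarks explicitly that the error term in \eqref{eq:HFourier} is ``problematic for very large values of $c$''. (Separately, the inequality $|\cos y|\le y$ you invoke is false near $y=0$.) The paper instead proves $\BesselTransform^+(x)\ll \Delta x$ directly from the defining integral \eqref{eq:H+def} by shifting the $r$-contour to $\text{Re}(2ir)=1+\delta$ and using the uniform bound $J_{1+\delta+2it}(x)\ll \cosh(\pi t)\,(x/(1+|t|))^{1+\delta}$; this produces the factor of $x$ honestly and the factor $\Delta$ from the effective length of the $h$-support.

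A smaller version of the same issue recurs in your last paragraph: bounding $\frac{d^k}{dx^k}\BesselTransform^+(x)$ by differentiating \eqref{eq:HFourier} is not quite legitimate because the $O(T^{-A})$ error is not known to be differentiable with comparable control. The paper works instead with the \emph{exact} representation (before truncation), splitting into $|v|\le 1$ (where $\cosh v$ is bounded and one differentiates under the integral) and $|v|>1$ (where one first integrates by parts in $v$ to beat down the growth of $\cosh^k v$). This is the step you should add rather than the ``bookkeeping'' you flagged as the main obstacle.
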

One can find the asymptotic behavior of $\BesselTransform^+$ given by Jutila and Motohashi \cite[(3.19)]{JM}, which essentially shows that for $x \gg \Delta T^{1-\varepsilon}$ it is of the shape $\frac{\Delta T}{\sqrt{x}} \cos(x + \phi(x,T))$, where $\phi = -\frac{2T^2}{x} + \dots$, with the dots indicating lower-order terms (again, with $x \gg \Delta T^{1-\varepsilon}$).  This comes from asymptotically evaluating the $v$-integral in \eqref{eq:HFourier} by stationary phase.  Our plan is to simply retain this $v$-integral until a later stage, and apply the stationary phase method at the very end.

The bound $\BesselTransform^+(x) \ll \Delta x$ for $x \ll T$ is used only for very large values of $c$, giving a way to initially truncate the sum over $c$.  Note that the error term in  \eqref{eq:HFourier} is problematic for very large values of $c$, since one needs to recover the convergence of the sum of Kloosterman sums.

\begin{proof}
Firstly, we show that for $x \ll T$, $\BesselTransform^+(x) \ll \Delta x$.  For this, we use the integral representation
  \begin{equation}
  \BesselTransform^{+}(x) = \frac{2i}{\pi} \intR \frac{J_{2ir}(x)}{\cosh(\pi r)} h(r) r dr,
 \end{equation}
shift the contour to $\text{Re}(2ir) = 1 + \delta$, $0 < \delta < 1$, and apply the uniform bound (for $t$ real and $x>0$) $J_{1+\delta +2 it}(x) \ll \cosh(\pi t) (\frac{x}{1+|t|})^{1+\delta}$, which in turn follows directly from \cite[8.411.4]{GR}.  This gives
\begin{equation}
 \BesselTransform^{+}(x) \ll x |h(-i/2)| +  \intR \Big(\frac{x}{1+|t|}\Big)^{1+\delta} \Big|h\Big(-\frac{i}{2}-\frac{i\delta}{2} + it\Big)\Big| \cdot \Big|-\frac{i}{2}-\frac{i\delta}{2} + it\Big| dt \ll 
\Delta x,
 \end{equation}
as no other poles are crossed besides $r = -i/2$.
 
 Now we treat the case where $x$ is not extremely small.
 We have (see \cite[8.411.11]{GR})
\begin{equation}
 \frac{J_{2ir}(x) - J_{-2ir}(x)}{\cosh( \pi r)} = \tanh(\pi r) \frac{2}{\pi i} \intR \cos(x \cosh(v)) e\Big(\frac{rv}{\pi} \Big) dv,
\end{equation}
so
\begin{equation}
\label{eq:HintegralRepExact}
\BesselTransform^{+}(x) =  \intR \cos(x \cosh(v)) \frac{4}{\pi^2}\int_0^{\infty} r h(r) \tanh(\pi r) e\Big(\frac{rv}{\pi} \Big) dr dv.
\end{equation}
The inner integral over $r$ is essentially the Fourier transform of a function effectively supported on $r =  T + O(\Delta \log T)$ (outside of this region, $h$ is exponentially small in terms of $T$), so that
\begin{equation}
 \BesselTransform^{+}(x) = \Delta T \intR \cos(x \cosh(v)) e\Big(\frac{vT}{\pi} \Big) g(\Delta v)  dv + O(T^{-A}),
\end{equation}
where $g$ is a function satisfying $g^{(k)}(x) \ll_{k,A} (1+ |x|)^{-A}$.  By the rapid decay of $g$, the contribution to the integral from $|v| \gg \Delta^{-1+\varepsilon}$ is also $O(T^{-A})$, so \eqref{eq:HFourier} holds.  Next we argue that $\BesselTransform^{+}(x) \ll T^{-A}$ unless $x \gg \Delta T^{1-\varepsilon}$.  This follows from Lemma \ref{lemma:exponentialintegral} (repeated integration by parts).  Furthermore, a minor variation of these estimates shows $\frac{d^k}{dx^k} \BesselTransform^{+}(x) \ll_{k,A} T^{-A}$ unless $x \gg \Delta T^{1-\varepsilon}$, for $k=1,2, \dots$.

In order to show $\frac{d^k}{dx^k} \BesselTransform^{+}(x)$ is polynomially bounded by $\Delta, T, x$, and $x^{-1}$, we take the integral representation \eqref{eq:HintegralRepExact} and treat $|v| \leq 1$ and $|v| > 1$ separately.  For $|v| \leq 1$ we simply differentiate inside the integral sign, and use that $\cosh(v) \leq \cosh(1)$ in this range; everything else is polynomially bounded in the other parameters.  For $|v| \geq 1$, we cannot so simply differentiate inside the integral sign because this introduces  powers of $\cosh(v)$ which causes convergence problems.  To get around this, 
we simply first repeatedly integrate by parts to save a large enough power of $e^{\pi |v|}$ to cancel these powers of $\cosh(v)$.  An alternative approach would be to use the recursion formulas for $\frac{d}{dx} J_{\nu}(x)$.
\end{proof}

\begin{mylemma}
\label{lemma:H-Fourier}
Let $\BesselTransform^{-}(x)$ be given by \eqref{eq:H-def}.  Then for $x \ll T$, we have $\BesselTransform^-(x) \ll_{\delta} \Delta T^{\delta} x^{1-\delta}$, for any $0<\delta < 1$.
Furthermore, there exists a function $g$ depending on $\Delta$ and $T$ satisfying $g^{(j)}(x) \ll_{j,A} (1 + |x|)^{-A}$, so that 
\begin{equation}
\label{eq:H-Fourier}
 \BesselTransform^-(x) = 
 \Delta T \int_{|v| \leq \frac{\Delta^{\varepsilon}}{\Delta}} \cos(x \sinh(v)) e\Big(\frac{vT}{\pi} \Big) g(\Delta v)  dv + O(T^{-A}).
\end{equation}
Furthermore, $\BesselTransform^{-}(x) \ll (x+T)^{-A}$ unless $x \asymp T$.  In addition, $\frac{d^k}{dx^k} \BesselTransform^{-}(x)$ is bounded by a polyomial (depending on $k$) in $\Delta, T, x$, and $x^{-1}$.
\end{mylemma}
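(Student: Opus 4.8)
The plan is to prove this as the $\BesselTransform^-$-analogue of Lemma \ref{lemma:HFourier}, running the same argument with $\sinh$ in place of $\cosh$ and the $K$-Bessel function in place of the $J$-Bessel function. The starting point is a Mehler--Sonine type representation $K_{2ir}(x)=\cosh(\pi r)^{-1}\int_0^\infty \cos(x\sinh v)\cos(2rv)\,dv$ valid for real $r$ (cf.\ \cite{GR}); it is the appearance of $\sinh v$ here, rather than $\cosh v$, that forces the final localization to be $x\asymp T$ instead of $x\gg \Delta T^{1-\varepsilon}$. Substituting this into \eqref{eq:H-def}, interchanging the order of integration, and using that $\tanh(\pi r)h_{k,l}(r)r$ is even, one reaches, in the same way as \eqref{eq:HintegralRepExact}, a representation
\[
\BesselTransform^-(x)=c\intR \cos(x\sinh v)\Big(\int_0^\infty \tanh(\pi r)h_{k,l}(r)r\, e(rv/\pi)\,dr\Big)\,dv
\]
with $c$ an absolute constant. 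The inner $r$-integral is the Fourier transform of a function concentrated on $r=T+O(\Delta\log T)$ (outside which $h$ is exponentially small), so it equals $\Delta T\, e(Tv/\pi)g(\Delta v)+O(T^{-A})$ for some $g$ of rapid decay; this is \eqref{eq:H-Fourier} once the negligible range $|v|>\Delta^{\varepsilon}/\Delta$ is discarded.

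For the smallness claim, write $\cos(x\sinh v)=\tfrac12 e^{ix\sinh v}+\tfrac12 e^{-ix\sinh v}$, so that the relevant phases are $\phi_\pm(v)=\pm x\sinh v+2Tv$, with $\phi_\pm'(v)=\pm x\cosh v+2T$. On the interval $|v|\le \Delta^{\varepsilon}/\Delta$ one has $\cosh v=1+O(\Delta^{-2+\varepsilon})$ — this is the essential difference from Lemma \ref{lemma:HFourier}, where the corresponding derivative involved $\sinh v\approx 0$ — so $|\phi_\pm'(v)|\gg x+T$ unless $x\asymp T$, and Lemma \ref{lemma:exponentialintegral}(1) gives $\BesselTransform^-(x)\ll_A (x+T)^{-A}$ for $x$ outside a fixed interval $[c_1T,c_2T]$, the powers of $\Delta$ accrued being harmless since $\Delta$ is a small power of $T$. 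The polynomial bound on $\frac{d^k}{dx^k}\BesselTransform^-(x)$ is obtained from the exact representation above exactly as in Lemma \ref{lemma:HFourier}: for $|v|\le 1$ one differentiates under the integral sign since $\sinh v$ is bounded, and for $|v|>1$ one first integrates by parts (in $v$, converting the growing factor $(\sinh v)^k$ into bounded quantities at the cost of powers of $x^{-1}$, just as the factor $(\cosh v)^k$ is handled there), after which everything is polynomially bounded in $\Delta$, $T$, $x$, and $x^{-1}$.

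For the bound $\BesselTransform^-(x)\ll_\delta \Delta T^\delta x^{1-\delta}$ for $x\ll T$ — needed, as with $\BesselTransform^+(x)\ll\Delta x$, to truncate the $c$-sum in the regime where \eqref{eq:H-Fourier} (with its $O(T^{-A})$ error) is unusable — the index-line shift used for $J_{2ir}$ in Lemma \ref{lemma:HFourier} does not transplant, since $K_\nu(x)$ grows like $x^{-|\mathrm{Re}(\nu)|}$ as $x\to0$. Instead, from the standard identity $K_{2ir}(x)=\tfrac{\pi}{2i}(I_{-2ir}(x)-I_{2ir}(x))/\sinh(2\pi r)$ together with $\sinh(2\pi r)=2\sinh(\pi r)\cosh(\pi r)$, the substitution $r\mapsto -r$, and the evenness of $h_{k,l}$, one gets
\[
\BesselTransform^-(x)=\frac{-2}{\pi i}\intR \frac{I_{2ir}(x)}{\cosh(\pi r)}\,h_{k,l}(r)\,r\,dr .
\]
Shifting the contour to $\mathrm{Im}(r)=-(1-\delta)/2$ crosses no poles (those of $\cosh(\pi r)^{-1}$ lie on $\mathrm{Im}(r)\in\tfrac12+\mz$, those of $h_{k,l}$ at imaginary part $\gg\Delta$), and on the shifted line the standard bound $|I_\nu(x)|\ll x^{\mathrm{Re}(\nu)}(1+|\nu|)^{-\mathrm{Re}(\nu)-1/2}e^{\pi|\mathrm{Im}(\nu)|/2}$, valid for $x\ll|\nu|\asymp T$, combines with the matching size $|\cosh(\pi r)|\asymp e^{\pi|\mathrm{Im}(\nu)|/2}$ to give $|I_{2ir}(x)\cosh(\pi r)^{-1}|\ll x^{1-\delta}(1+|r|)^{-3/2+\delta}$. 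Since $|h_{k,l}|\ll1$ and $h_{k,l}$ is supported on an interval of length $\ll\Delta$ around $|r|\asymp T$, integrating then gives $\BesselTransform^-(x)\ll \Delta T^{\delta-1/2}x^{1-\delta}\ll \Delta T^\delta x^{1-\delta}$, and the restriction $\delta<1/2$ is exactly what makes $\sum_c c^{-1}|S(m,n;c)|\,c^{-(1-\delta)}$ converge in the application. (For $x$ comparable to $T$ this is not needed, as \eqref{eq:H-Fourier} already gives $\BesselTransform^-(x)\ll T$ trivially.)

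The only genuinely non-routine point is the uniformity bookkeeping emphasized at the start of Section \ref{section:separationAnalytic}: the error $O(T^{-A})$ in \eqref{eq:H-Fourier} and the powers of $\Delta$ accumulated from applying Lemma \ref{lemma:exponentialintegral} on an interval of length $\Delta^{\varepsilon-1}$ must remain compatible with the subsequent $q$-aspect analysis. Since $\Delta\asymp T^{\eta'}$ with $\eta'$ arbitrarily small, all of these are absorbed by slightly enlarging $A$, exactly as in the proof of Lemma \ref{lemma:HFourier}.
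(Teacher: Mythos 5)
Most of your argument coincides with the paper's: the representation of $\BesselTransform^-$ via the formula $K_{2ir}(x)\cosh(\pi r)=\tfrac12\int\cos(x\sinh v)\cos(2rv)\,dv$, the identification of the inner $r$-integral as a Fourier transform concentrated at $r=T+O(\Delta\log T)$, the localization to $x\asymp T$ via the non-degenerate phase derivative $\pm x\cosh v+2T$ (this is exactly what the paper means by ``$\sinh(v)\sim v$''), and the splitting $|v|\le 1$ versus $|v|>1$ for the derivative bounds. The one place you genuinely depart from the paper is the preliminary bound $\BesselTransform^-(x)\ll_\delta\Delta T^\delta x^{1-\delta}$ for $x\ll T$, and there your argument has a gap.

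After rewriting $\BesselTransform^-(x)=\frac{-2}{\pi i}\int_{\mr}I_{2ir}(x)\cosh(\pi r)^{-1}h_{k,l}(r)\,r\,dr$ and shifting to $\mathrm{Im}(r)=-(1-\delta)/2$, you apply the triangle inequality with a bound for $I_\nu(x)$ that is only valid for $x\ll|\nu|$, and you justify restricting to $|\nu|\asymp T$ by saying $h_{k,l}$ is ``supported on'' an interval around $|r|\asymp T$. It is not: $h_{k,l}$ is analytic and merely decays like $e^{-|\,|t|-T|/\Delta}$. On the portion of the shifted contour with $|t|\ll x$ one has $|\nu|\ll x$, where $I_{2ir}(x)$ is of size $e^{x}/\sqrt{x}$ (the $I$-Bessel function grows exponentially once its argument exceeds its order), while $h_{k,l}$ there is only as small as $e^{-T/\Delta}$. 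Hence for $T/\Delta\ll x\ll T$ --- a range covered by the assertion ``$x\ll T$'' --- the integrand is pointwise of size $e^{x-T/\Delta}$, which is exponentially large, and no absolute-value estimate on the shifted contour can yield the claimed bound. (The integral is in fact still small, but only because of cancellation: the leading, $r$-independent part $e^{x}/\sqrt{2\pi x}$ of $I_{2ir}(x)$ integrates against the \emph{odd} function $h(r)r/\cosh(\pi r)$; your estimate discards this.) This is exactly why the paper stays with the $K$-Bessel function: it first applies the recursion $K_\nu(x)=\frac{x}{2\nu}K_{\nu+1}(x)-\frac{x}{2\nu}K_{\nu-1}(x)$ and then uses the representation \cite[8.432.5]{GR} to obtain $|K_{\delta+2iy}(x)|\ll_\delta(1+|y|)^{\delta}x^{-\delta}\cosh(\pi y)^{-1}$ \emph{uniformly in} $x>0$, so the contour shift plus triangle inequality is legitimate on the whole line. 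Your route is sound for $x\ll T/\Delta$ (in particular for the truncation of the $c$-sum, where $x$ is tiny), but it does not prove the lemma's first assertion in the range $T/\Delta\ll x\ll T$; either restrict the claim and cover the remaining range by \eqref{eq:H-Fourier}, or switch to the paper's $K$-Bessel recursion.
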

\begin{proof}
 We quote the relation \cite[8.486.10]{GR}
 \begin{equation}
  K_{\nu}(x) = \frac{x}{2 \nu} K_{\nu + 1}(x) - \frac{x}{2\nu} K_{\nu-1}(x),
 \end{equation}
which implies
\begin{equation}
\label{eq:H-integralRepSplit}
 \BesselTransform^{-}(x) = \frac{x}{i \pi^2} \intR \Big(K_{1+2ir}(x) - K_{1-2ir}(x) \Big) \sinh(\pi r)  h(r) dr.
\end{equation}
Next by \cite[8.432.5]{GR}, we have for $\text{Re}(\nu) \geq -\half$, 
\begin{equation}
 K_{\nu}(x) = \frac{\Gamma(\nu + \frac12) (2/x)^{\nu}}{\Gamma(1/2)} \int_0^{\infty} \frac{\cos(x v)}{(v^2 + 1)^{\nu + \frac12}} dv,
\end{equation}
so that if $\text{Re}(\nu) = \delta$ with $\delta > 0$, we have for $y \in \mr$ by Stirling and a trivial bound
\begin{equation}
\label{eq:KBesselBound}
 |K_{\delta + 2iy}(x)| \ll_{\delta} \frac{(1 + |y|)^{\delta}}{x^{\delta} \cosh(\pi y)}.
\end{equation}
For the part of \eqref{eq:H-integralRepSplit} with $K_{1+2ir}(x)$, we shift the contour to $\text{Re}(1+2ir) = \delta > 0$, and apply \eqref{eq:KBesselBound}.  A similar procedure (shifting the other direction) works for the part with $K_{1-2ir}(x)$, so in all we obtain the bound
\begin{equation}
 \BesselTransform^{-}(x) \ll_{\delta} x^{1-\delta} \intR (1 + |y|)^{ \delta} \exp\Big(- \Big|\frac{y-T}{2\Delta} \Big| \Big) dy \ll_{\delta} \Delta T^{\delta} x^{1-\delta}.
\end{equation}

Next we derive \eqref{eq:H-Fourier}.  For this we use \cite[8.432.4]{GR}, which states
\begin{equation}
  K_{2ir}(x) \cosh(\pi r) = \frac12 \intR \cos(x \sinh v) \cos(2 r v) dv.
\end{equation}
The integral does not converge absolutely, but it does converge uniformly as one can integrate by parts once to save a factor $1/\sinh(v)$.  Thus we derive
\begin{equation}
 \BesselTransform^{-}(x) =  \intR \cos(x \sinh(v)) \frac{1}{\pi^2} \int_0^{\infty} r h(r)  \tanh(\pi r) e\Big(\frac{rv}{\pi} \Big)  dr dv.
\end{equation}
Compare this with \eqref{eq:HintegralRepExact}.  At this point, all the arguments from Lemma \ref{lemma:HFourier} carry over almost without change, so we omit the details.  The main difference is that $\sinh(v) \sim v$ for $v = o(1)$, so by Lemma \ref{lemma:exponentialintegral}, $\BesselTransform^{-}(x)$ is small unless $x \asymp T$.
\end{proof}

Next we consider $\BesselTransform^{\text{holo}}(x)$ defined originally by \eqref{eq:Hholodef}.
By \cite[pp. 85-86]{IwaniecClassical}, we have
\begin{equation}
 \BesselTransform^{\text{holo}}(x) = \frac{T}{4} \intR W(t) c(t) dt,
\end{equation}
where 
\begin{equation}
W(t) = \intR  w\Big(\frac{y-2T}{\Delta}\Big) e(ty) dy 
\end{equation}
and
\begin{equation}
 c(t) = -2i \sin(x \sin(2 \pi t)) + 2i^{-a} \sin(x \cos(2 \pi t)).
\end{equation}
By direct calculation, $W(t) = \Delta e(2t T) \widehat{w}(-\Delta t)$, 
and therefore there exists a function $g$ such that $g^{(j)}(x) \ll (1+|x|)^{-A}$ so that
\begin{equation}
 \BesselTransform^{\text{holo}}(x) = \Delta T \intR e\Big(\frac{vT}{ \pi}\Big) g(\Delta v) (2i \sin(x \sin(v)) + 2i^{-a} \sin(x \cos (v))) dv.
\end{equation}
We can thus decompose $\BesselTransform^{\text{holo}}(x)$ into two pieces that are precisely of the form \eqref{eq:HFourier} and \eqref{eq:H-Fourier} but with $T$ replaced by $T'$, $\cosh(v)$ replaced by $\cos(v)$, and $\sinh(v)$ replaced by $\sin(v)$, and $\cos(x)$ replaced by $\sin(x)$.  In this holomorphic case it is quite easy to show $\BesselTransform^{\text{holo}}(x) \ll Tx$ since $\sin(x) \ll x$.

We close this section by applying these results to the cubic moment problem.
Recall that we wish to show \eqref{eq:quadsum}.  
With $w_1, w_2, w_3$ as in Theorem \ref{thm:quadlinearbound}, 
define
\begin{multline}
\label{eq:Sdef}
S_{\pm}(N_1, N_2, N_3 ;C ;f) 
\\
= \sum_{\substack{ c \asymp C \\ c \equiv 0 \shortmod{q}}} \sum_{n_1, n_2, n_3} \chi_{q}(n_1 n_2 n_3) S(n_1 n_2, \pm n_3;c) w_1(n_1) w_2(n_2) w_3(n_3)  f\Big(\frac{4 \pi \sqrt{n_1 n_2 n_3}}{c} \Big).
 \end{multline}
Theorem \ref{thm:quadlinearbound} amounts to a bound on this $S$ with $f= \BesselTransform$.  Using the weak bound $\BesselTransform(x) \ll T x^{3/4}$, and the Weil bound for Kloosterman sums, we obtain
\begin{equation}
 S_{\pm}(N_1, N_2, N_3 ;C ;\BesselTransform)
 \ll T (N_1 N_2 N_3)^{11/8} (qT)^{\varepsilon} C^{-1/4+\varepsilon},
\end{equation}
which is satisfactory if $C$ is a large power of $qT$, using $N_i \ll (qT)^{1+\varepsilon}$.  Therefore, it suffices to bound $S$ when $C \ll (qT)^{A}$ for some fixed but large $A$.
  For a similar reason, it suffices to bound the terms with $\BesselTransform(x)$ replaced by $\BesselTransform_0(x)$ defined by
\begin{equation}
\label{eq:H0def}
 \BesselTransform_0(x) = \Delta T \int_{|v| \leq \frac{\Delta^{\varepsilon}}{\Delta}} e(x \phi(v)) e\Big(\frac{vT}{\pi} \Big) g(\Delta v)  dv,
\end{equation}
where 
\begin{equation}
  \label{eq:phidef}
  \phi(v) \in \pm \{ \cos(v), \cosh(v), \sin(v), \sinh(v) \}.
 \end{equation}
By this discussion, we have reduced the proofs of \eqref{eq:cubicmomentholmorphicSumOverWeight}, \eqref{eq:cubicmomentMaass}, and \eqref{eq:cubicmomentEisenstein} (and hence, Theorem \ref{thm:cubicmoment}) to the following 
\begin{myprop}
\label{prop:generalH}
 Let $\BesselTransform_0$ be a function of the form \eqref{eq:H0def},
 where $g$ is a function satisfying $g^{(j)}(x) \ll_{j,A} (1 + |x|)^{-A}$.
 Then for $N_i \ll (qT)^{1+\varepsilon}$ and $C \ll (qT)^{A}$ with some fixed $A$, we have
 \begin{equation}
\label{eq:goalbound}
 S(N_1, N_2, N_3 ;C ;\BesselTransform_0) \ll (N_1 N_2 N_3)^{1/2} \Delta T (qT)^{\varepsilon}.
 \end{equation}
\end{myprop}

\section{Analytic separation of variables}
\label{section:separationAnalytic}

Following the method of \cite{CI}, we begin by
applying Poisson summation to each of $n_1, n_2, n_3$ modulo $c$ in \eqref{eq:Sdef}.  That is,
\begin{equation}
\label{eq:SGK}
S_{\pm}(N_1, N_2, N_3 ;c ;\BesselTransform_0) = \sum_{m_1, m_2, m_3 \in \mz} G_{\pm}(m_1, m_2, m_3;c) K(m_1, m_2, m_3, c),
\end{equation}
where
\begin{equation}
\label{eq:Gpmdef}
G_{\pm}(m_1 ,m_2, m_3;c) = c^{-3} \sum_{a_1, a_2, a_3 \shortmod{c}} \chi_{q}(a_1 a_2 a_3) S(a_1 a_2, \pm a_3;c) e_c(a_1m_1 + a_2 m_2 + a_3 m_3),
\end{equation}
and
\begin{equation}
\label{eq:Kdef}
K(m_1 ,m_2, m_3, c) = \int_{\mr^3} w_1(t_1) w_2(t_2) w_3(t_3) \BesselTransform_0\Big(\frac{4 \pi \sqrt{t_1 t_2 t_3}}{c} \Big) e_c(-m_1 t_1 - m_2 t_2 - m_3 t_3) dt_1 dt_2 dt_3.
\end{equation}
By changing variables $a_3 \rightarrow -a_3$, one derives $G_{-}(m_1, m_2, m_3;c) = \chi_q(-1) G_{+}(m_1, m_2, -m_3;c)$, which will allow us to mainly focus on the $+$ sign case.  Let $G = G_{+}$ as shorthand. 

The sum $G$ was studied extensively by Conrey and Iwaniec \cite{CI}, so we will quote their results.  Our work differs from \cite{CI} in the nature of the weight function $K$.  Conrey and Iwaniec showed, for $T$ essentially bounded, that $K$ has a phase $e_c(-m_1 m_2 m_3)$, and is small except for $m_i \ll (N_1 N_2 N_3)^{1/2}/N_i$.  The difficuly in extending their work is that for $T$ larger, one needs to more carefully treat the lower order terms in the phase of $K$.  More generally, what we show is that $K$ still has a phase as mentioned above, but has a lower-order phase that still contains the factor $m_1 m_2 m_3/c$ in a block.  This still allows us to use a Mellin technique to separate these four variables with a single integral.  The important properties of $K$ are summarized in the following
\begin{mylemma}
\label{lemma:Kbound}
Suppose that $|m_i| \asymp M_i$ for each $i$, and $c \asymp C$.  
\begin{enumerate}
\item Suppose $\phi(v)$ is $\pm \cos v$ or $\pm \cosh v$.  Then
\begin{multline}
\label{eq:Kseparated}
K(m_1, m_2, m_3, c) = \frac{C^{3/2} \Delta T (N_1 N_2 N_3)^{1/2} e_c(-m_1 m_2 m_3)}{(M_1 M_2 M_3)^{1/2}}  L(m_1, m_2, m_3, c)
\\
+ O(T^{-A} \prod_{i=1}^{3} (1+ |m_i|)^{-2} ),
\end{multline}
where $L$ is a function that takes the form
\begin{equation}
\label{eq:Rdef}
 L(m_1, m_2, m_3, c) = \frac{1}{V} \int_{|{\bf y}| \leq T^{\varepsilon}} m_1^{iy_1} m_2^{i y_2} m_3^{i y_3} c^{i y_4} \int_{|u| \ll U} \ell(u, {\bf y}) \Big(\frac{m_1 m_2 m_3}{c}\Big)^{iu} du d {\bf y}, 
\end{equation}
where ${\bf y} = (y_1, y_2, y_3, y_4)$, $V = T$, and 
\begin{equation}
\label{eq:Udef} 
 U = \frac{T^2 C}{(N_1 N_2 N_3)^{1/2}}.
\end{equation}
Here $\ell(u, {\bf y}) \ll 1$ does not depend on $c$ and the $m_i$.  Furthermore, $L$ vanishes unless
\begin{equation}
\label{eq:CsizeCos}
 C \ll \frac{(N_1 N_2 N_3)^{1/2}}{\Delta^{1-\varepsilon} T}, \quad \text{and} \quad M_i \asymp \frac{(N_1 N_2 N_3)^{1/2}}{N_i}, i=1,2,3.
\end{equation}
\item Suppose $\phi(v)$ is $\pm \sin v $ or $\pm \sinh v$.  Then
\begin{multline}
\label{eq:Kseparated2}
K(m_1, m_2, m_3, c) = \frac{C^{3/2} \Delta T (N_1 N_2 N_3)^{1/2} e_c(m_1 m_2 m_3)}{(M_1 M_2 M_3)^{1/2}}  L(m_1, m_2, m_3, c)
\\
+ O(T^{-A} \prod_{i=1}^{3} (1+ |m_i|)^{-2} ),
\end{multline}
with the following parameters.
If $\frac{M_i N_i}{C} \gg T^{\varepsilon}$ for some $i$, then $L$ is defined as \eqref{eq:Rdef} but with $V = T$, $U = X^{1/3} T^{2/3}$, and $X = \frac{M_1 M_2 M_3}{C}$.  In addition, $L$ vanishes unless 
\begin{equation}
\label{eq:CMisizesforSin}
 C \asymp \frac{(N_1 N_2 N_3)^{1/2}}{T}, \quad  \quad M_i \ll \frac{(N_1 N_2 N_3)^{1/2}}{ N_i \Delta^{1-\varepsilon} },
\end{equation}
and $M_1 N_1 \asymp M_2 N_2 \asymp M_3 N_3$.
If $\frac{M_i N_i}{C} \ll T^{\varepsilon}$ for all $i$ then $K$ has the same form as 
\eqref{eq:Kseparated} with $L$ defined as \eqref{eq:Rdef} but with $V = T^{-\varepsilon} X^{-1/2}$, $U = T^{\varepsilon}$, and $X = \frac{M_1 M_2 M_3}{C}$.
\end{enumerate}
\end{mylemma}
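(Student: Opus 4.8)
The plan is to do a three-dimensional stationary phase analysis in the variables $t_1,t_2,t_3$ and then separate the four variables $m_1,m_2,m_3,c$ by Mellin transforms. First I would insert \eqref{eq:H0def} into \eqref{eq:Kdef} and interchange the order of integration, writing
\[
 K(m_1,m_2,m_3,c) = \Delta T \int_{|v| \leq \Delta^{\varepsilon}/\Delta} g(\Delta v)\, e\Big(\frac{vT}{\pi}\Big)\, \mathcal{J}(v)\, dv,
\]
where, with $\phi$ as in \eqref{eq:phidef},
\[
 \mathcal{J}(v) = \int_{\mr^3} w_1(t_1) w_2(t_2) w_3(t_3)\, e\Big(\frac{4\pi\sqrt{t_1 t_2 t_3}}{c}\,\phi(v) - \frac{m_1 t_1 + m_2 t_2 + m_3 t_3}{c}\Big) dt_1\, dt_2\, dt_3.
\]

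Next I would evaluate $\mathcal{J}(v)$ by applying Lemma \ref{lemma:exponentialintegral} three times in succession, once in each $t_i$ with the other two held fixed; carried out carefully this reproduces the multidimensional stationary phase expansion. Writing $\Psi_v$ for the phase, one has $\partial_{t_i}\Psi_v = \frac{2\pi\phi(v)}{c}\sqrt{t_j t_k/t_i} - m_i/c$, so there is a unique stationary point with $t_i^0 \asymp m_j m_k/\phi(v)^2$ (indices $\{i,j,k\} = \{1,2,3\}$), which meets the support of the $w_i$ only when the $m_i$ all share the sign of $\phi(v)$ and $M_j M_k \asymp N_i \phi(v)^2$ for each $i$; otherwise part (1) of Lemma \ref{lemma:exponentialintegral} makes $\mathcal{J}(v)$ negligible, and pressing this argument when some $|m_i|$ exceeds its natural scale yields the decay $\prod_i (1+|m_i|)^{-2}$ (and the vanishing of $L$ off the stated support). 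A direct computation gives $|\det \mathrm{Hess}\,\Psi_v| \asymp \phi(v)^6/(c^3 |m_1 m_2 m_3|)$ and critical value $\Psi_v(t^0) = -\kappa\, m_1 m_2 m_3/(c\,\phi(v)^2)$ for a fixed $\kappa$. When $\phi \in \pm\{\cos v,\cosh v\}$ one has $\phi(v)\asymp 1$, so the support conditions force $M_i \asymp (N_1 N_2 N_3)^{1/2}/N_i$ as in \eqref{eq:CsizeCos} and $\Psi_v(t^0)$ reduces to $-m_1 m_2 m_3/c$ at $v=0$ (after the normalization built into $\phi$), which produces the phase $e_c(-m_1 m_2 m_3)$; when $\phi \in \pm\{\sin v,\sinh v\}$ the function $\phi$ vanishes at the origin and the analysis is genuinely different (see below). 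Thus part (2) of Lemma \ref{lemma:exponentialintegral} gives
\[
 \mathcal{J}(v) = \frac{c^{3/2}|m_1 m_2 m_3|^{1/2}}{|\phi(v)|^{3}}\, e\big(\Psi_v(t^0)\big)\, F(m_1,m_2,m_3,c;v) + O\Big(T^{-A}\prod_{i=1}^{3}(1+|m_i|)^{-2}\Big),
\]
where $F$, being inert in the stationary point and in the parameters of $\Psi_v$, is inert in $(m_1,m_2,m_3,c;v)$ by \eqref{eq:inertprecomposition}.

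It then remains to treat the one-dimensional $v$-integral. Setting $\psi(v) = \kappa/\phi(v)^2$, its phase is $\frac{vT}{\pi} - \frac{m_1 m_2 m_3}{c}\psi(v)$, which depends on $(m_1,m_2,m_3,c)$ only through $\xi := m_1 m_2 m_3/c$, while the amplitude $g(\Delta v)|\phi(v)|^{-3}F(m_1,m_2,m_3,c;v)$ is inert in $(m_1,m_2,m_3,c)$; I would first separate the latter four variables by Mellin inversion in each, using the rapid decay of the Mellin transform of an inert function, which produces the $\int_{|{\bf y}| \leq T^{\varepsilon}} m_1^{iy_1}m_2^{iy_2}m_3^{iy_3}c^{iy_4}(\cdots)\,d{\bf y}$ in \eqref{eq:Rdef}. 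In the $\cos/\cosh$ case one writes $\psi(v) = \psi(0) + (\psi(v)-\psi(0))$, pulling out $e_c(-m_1 m_2 m_3)$, and takes the Mellin transform in $\xi$ of the residual $\int g(\Delta v)\, e(\tfrac{vT}{\pi})\, e(-\xi(\psi(v)-\psi(0)))\,(\text{inert in }v)\, dv$; since the Mellin transform of $e(-\xi a)$ in $\xi$ is a Gamma factor times $a^{-iu}$, this becomes a Gamma factor times $\int g(\Delta v)\, e(\tfrac{vT}{\pi})\,(\psi(v)-\psi(0))^{-iu}\,(\text{inert})\, dv$, whose phase $\frac{vT}{\pi} - \frac{u}{2\pi}\log(\psi(v)-\psi(0))$ has a stationary point in $v$ only for $|u|$ below a threshold — and, after localizing $v$ to the scale of the stationary point of $\frac{vT}{\pi}-\xi\psi(v)$ (a localization which also forces the constraint on $C$ in \eqref{eq:CsizeCos}), that threshold works out to $U = T^2 C/(N_1 N_2 N_3)^{1/2}$ as in \eqref{eq:Udef}. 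Beyond it, part (1) of Lemma \ref{lemma:exponentialintegral} gives negligible contribution, so Mellin inversion in $\xi$ truncates to $|u| \ll U$ and pulling out the scale $1/V$, $V = T$, leaves $\ell(u,{\bf y}) \ll 1$; collecting the prefactors $\Delta T$, $c^{3/2}|m_1 m_2 m_3|^{1/2}$, the Gamma factor and $1/V$, and using \eqref{eq:CsizeCos}, gives \eqref{eq:Kseparated}. In the $\sin/\sinh$ case $\phi$ vanishes at $0$, so instead the full $v$-phase $\frac{vT}{\pi} - \xi/\phi(v)^2$ has its stationary point at $v_* \asymp (X/T)^{1/3}$ with $X = M_1 M_2 M_3/C$, bounded away from the origin; combined with the constraint $C \asymp (N_1 N_2 N_3)^{1/2}/T$ from Lemma \ref{lemma:H-Fourier}, this yields $M_i N_i \asymp M_j N_j$ and \eqref{eq:CMisizesforSin}, and the $v$-integral, again a function of $\xi$ alone, Mellin-separates in $\xi$ with cutoff $U \asymp X^{1/3} T^{2/3}$ (after which one conventionally extracts $e_c(m_1 m_2 m_3)$, harmless as its own $\xi$-frequency is of size $X \ll U$) — this is \eqref{eq:Kseparated2}. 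Finally, in the degenerate sub-case $M_i N_i/C \ll T^{\varepsilon}$ for all $i$ one has $U \ll T^{\varepsilon}$: the $v$-range permitted by the $t$-stationary point is then so short that neither $g(\Delta v)$ nor $e(\tfrac{vT}{\pi})$ oscillates over it, so the $v$-integral is bounded by its size rather than evaluated by stationary phase, which is exactly what produces the renormalization $V = T^{-\varepsilon}X^{-1/2}$, $U = T^{\varepsilon}$.

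The main difficulty is not the stationary phase itself — Lemma \ref{lemma:exponentialintegral} is designed for it — but keeping every error term strong enough to be simultaneously uniform in $q$ and $T$, together with the genuinely separate $\sin/\sinh$ analysis. Concretely, the error $O(T^{-A}\prod_i (1+|m_i|)^{-2})$ must survive all three stationary-phase steps and all of the Mellin inversions, so one must extract the decay in each $m_i$ at the stage where that variable first becomes non-stationary rather than only at the end; and in the $\sin/\sinh$ case the vanishing of $\phi$ at the origin forces one to localize $v$ away from $0$ to the scale $v_* \asymp (X/T)^{1/3}$ where $\frac{vT}{\pi}$ balances $\xi/\phi(v)^2$, to identify the correct length $U = X^{1/3}T^{2/3}$ of the $u$-integral, and to dispatch separately the degenerate regime $M_i N_i/C \ll T^{\varepsilon}$ in which there is no oscillation at all.
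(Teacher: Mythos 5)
Your endpoint formulas (stationary point $t_i^0\asymp m_jm_k/\phi(v)^2$, Hessian $\asymp\phi(v)^6/(c^3|m_1m_2m_3|)$, critical value $-\kappa\,m_1m_2m_3/(c\phi(v)^2)$, the extraction of $e_c(\mp m_1m_2m_3)$ via $\psi(0)$, the cutoffs $U=T^2C/(N_1N_2N_3)^{1/2}$ resp.\ $U=X^{1/3}T^{2/3}$, and the degenerate renormalization $V=T^{-\varepsilon}X^{-1/2}$, $U=T^{\varepsilon}$) all agree with the paper, and your treatment of the $v$-integral is essentially the paper's Lemma \ref{lemma:PhiMellin}. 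However, there is a genuine gap in the mechanism you propose for the three-dimensional stationary phase. You plan to apply Lemma \ref{lemma:exponentialintegral} ``three times in succession, once in each $t_i$ with the other two held fixed.'' The first application (in $t_1$, say) is fine, but evaluate the reduced phase at $t_1^0=\phi(v)^2t_2t_3/m_1^2$: since $\sqrt{t_1^0t_2t_3}=\phi(v)t_2t_3/m_1$ exactly, the surviving phase is
\begin{equation*}
\frac{\phi(v)^2\,t_2t_3}{c\,m_1}-\frac{m_2t_2+m_3t_3}{c},
\end{equation*}
which is \emph{bilinear} in $(t_2,t_3)$. Its pure second derivative in $t_2$ (with $t_3$ fixed) vanishes identically, so the hypothesis $\phi''(t)\gg Y/X^2$ of Lemma \ref{lemma:exponentialintegral}(2) fails at the second step, and the same happens in any order of the variables. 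The $2\times 2$ Hessian of the bilinear phase is nonsingular, so the two-dimensional critical point is non-degenerate and your Hessian computation is correct, but you cannot reach it by iterating the one-dimensional lemma; you would need either a genuine multidimensional stationary phase lemma or a separate argument exploiting the bilinear structure (Fourier localization of $t_3$ after integrating out $t_2$).

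This is precisely the ``awkwardness'' the paper's proof is built to avoid: it substitutes $t_3=u/(t_1t_2)$, so that $\BesselTransform_0$ depends only on $u$ and the inner $(t_1,t_2)$-phase becomes $-\alpha/(t_1t_2)-\beta t_1-\gamma t_2$, which \emph{does} have nonvanishing pure second derivatives in each variable and is handled by iterated one-dimensional stationary phase in Lemma \ref{lemma:Iuasymptotic}; only afterwards is the remaining one-dimensional $u$-integral combined with the $v$-integral from $\BesselTransform_0$. Besides repairing the degeneracy, this ordering postpones all use of the properties of $\BesselTransform_0$ (hence of the case distinction $\cos/\cosh$ versus $\sin/\sinh$) to the final one-dimensional step, whereas in your arrangement the case distinction contaminates all three $t_i$-integrals from the start. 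If you either import a two-dimensional stationary phase result or adopt the substitution $t_3=u/(t_1t_2)$, the rest of your argument goes through and matches the paper.
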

Remarks.
Here the ${\bf y}$-integral is practically harmless, so in effect Lemma \ref{lemma:Kbound} expresses $K$ in terms of an integral of length $U$, and has all variables $m_1, m_2, m_3, c$ separated.  The expressions \eqref{eq:Kseparated} and \eqref{eq:Kseparated2} are identical except for the sign in $e_c(\mp m_1 m_2 m_3)$; in both cases, this phase cancels a factor coming from the calculation of $G_{\pm}(m_1, m_2, m_3 ;c)$ defined by \eqref{eq:Gpmdef} below.  Actually, the appearance of $e_c(m_1 m_2 m_3)$ in \eqref{eq:Kseparated2} is artificial in the sense that this phase is of lower order than the phase implicit in $L(m_1, m_2, m_3, c)$ and we could have equally well showed that $e_c(m_1 m_2 m_3) L(m_1, m_2, m_3, c)$ has a representation in the form \eqref{eq:Rdef}.  

A careful reader may notice that we have not stated an expression for $K$ that is a true analog of \eqref{eq:KapproxSketchSection}.  The reason for this omission is that for the large sieve method, it is desirable to have the variables $m_1, m_2, m_3$, and $c$ separated in multiplicative (Mellin) form, and so developing an asymptotic expansion of the new phase of $L$ is at odds with this goal.  It should be possible to develop the correct form of \eqref{eq:KapproxSketchSection} by applying the stationary phase method to \eqref{eq:Phiintegralpm}.  Instead, the key idea is that in \eqref{eq:Kformula2}, the variables $m_1, m_2, m_3, c$ only occur in the block form $m_1 m_2 m_3/c$.  It then suffices to understand the Mellin transform of $\Phi$ in a somewhat crude form (upper bounds suffice)
 which is a bit easier than forming an asymptotic evaluation with many lower-order terms.

\begin{proof}
As our first step, we integrate by parts three times in each of the $t_i$ for which $m_i \neq 0$, allowing us to obtain a crude bound of the form
\begin{equation}
 K(m_1, m_2, m_3, c) \ll P(T, \Delta, N_1, N_2, N_3, c) \prod_{i=1}^{3} (1+ |m_i|)^{-3},
\end{equation}
where $P$ is some fixed polyomial.  This bound is sufficient for Lemma \ref{lemma:Kbound} when some $m_i$ is $\gg T^{A'}$ for some large $A'$ depending polynomially on $A$.

For the rest of the proof, suppose that $|m_i| \ll T^{A'}$ for some $A'$, and each $i$.  Then 
in the expansion for $\BesselTransform(x)$ we may assume $x \gg T$, since otherwise $\BesselTransform_0$ is extremely small, and we obtain the desired bound for Lemma \ref{lemma:Kbound}.  

Before we jump into more intricate analysis, we can fix the sizes of $C$ as stated
in \eqref{eq:CsizeCos} and \eqref{eq:CMisizesforSin} using $x \asymp \frac{(N_1 N_2 N_3)^{1/2}}{C}$, and Lemmas \ref{lemma:HFourier} and \ref{lemma:H-Fourier}.

It is a bit awkward to directly treat $K$ by stationary phase in each $t_i$, so instead we use the following workaround.
By the change of variables $t_3 = u/(t_1 t_2)$, we obtain
\begin{equation}
\label{eq:Kformula0}
K(m_1, m_2, m_3, c) = \int_{0}^{\infty} \BesselTransform_0\Big(\frac{4 \pi \sqrt{u}}{c} \Big) I(u) du + O(T^{-A}),
\end{equation}
where
\begin{equation}
I(u) = \int_0^{\infty} \int_0^{\infty} w_3\Big(\frac{u}{t_1 t_2} \Big) w_1(t_1) w_2(t_2)  e_c(-\frac{m_3 u}{t_1 t_2} - m_1 t_1 - m_2 t_2) \frac{dt_1 dt_2}{t_1 t_2}.
\end{equation}
The asymptotic expansion of $I(u)$ will be derived from Lemma \ref{lemma:Iuasymptotic}.  This saves
the more difficult case of stationary phase with $\BesselTransform_0$ for last when the integral is one-dimensional.  This feature pleasantly allows us to unify cases for as long as possible since we do not use any properties of $\BesselTransform_0$ until the later stages.

Suppose that
\begin{equation}
\label{eq:milowerbound}
 \frac{M_i N_i}{C} \gg T^{\varepsilon}, \quad i=1,2,3,
\end{equation}
so that the integrand defining $I(u)$ is oscillatory.  The opposite case is somewhat easier and we will return to it later.
Lemma \ref{lemma:Iuasymptotic} then shows that 
\begin{equation}
I(u) = \frac{c}{(m_1 m_2 N_1 N_2)^{1/2}} e\Big(-\frac{3 (m_1 m_2 m_3 u)^{1/3}}{c}\Big) w_4(u) + O(T^{-A}),
\end{equation}
where $w_4$ is a smooth function supported on $u \asymp N_1 N_2 N_3$, depending on $m_1, m_2, m_3, c$, etc., but that is inert in all variables.  Furthermore, we derive from \eqref{eq:alphabetagammasupport} that $w_4$ vanishes unless
\begin{equation}
\label{eq:misymmetrysizes}
m_1 N_1 \asymp m_2 N_2 \asymp m_3 N_3.
\end{equation}
Hence we derive
\begin{multline}
\label{eq:Kformula}
K = 
\Big(\frac{c \Delta T}{(m_1 m_2 N_1 N_2)^{1/2}}  \int_{|v| \leq \frac{\Delta^{\varepsilon}}{\Delta}}  e\Big(\frac{vT}{\pi} \Big) g(\Delta v) 
\\
\int_0^{\infty} e\Big(\frac{2\sqrt{u}}{c} \phi(v) 
- 
\frac{3(m_1 m_2 m_3 u)^{1/3}}{c}\Big) w_4(u) du  dv \Big)
+ O(T^{-A}).
\end{multline}

The $u$-integral can also by analyzed by stationary phase, as \eqref{eq:milowerbound} and Part 1 of Lemma \ref{lemma:exponentialintegral} shows that it is small unless a stationary point exists, which implies 
\begin{equation}
\label{eq:phisupport}
|\phi(v)| \asymp |m_1 m_2 m_3|^{1/3} (N_1 N_2 N_3)^{-1/6},
\end{equation}
(additionally,  $\phi(v)$ must have the same sign as $m_1 m_2 m_3$).  Note that if $\phi(v) = \pm \cos v$ or $\pm \cosh v$ then $|\phi(v)| \asymp 1$ so this implies 
 $|m_1 m_2 m_3| \asymp 
 (N_1 N_2 N_3)^{1/2}$, which by \eqref{eq:misymmetrysizes} leads to the assumption $M_i \asymp (N_1 N_2 N_3)^{1/2}/N_i$ in Part 1 of Lemma \ref{lemma:Kbound}.

 However, if $\phi(v) = \pm \sin v$ or $\pm \sinh v$, then we obtain $|v| \asymp |m_1 m_2 m_3|^{1/3} (N_1 N_2 N_3)^{-1/6}$, and since $|v| \leq \Delta^{-1 + \varepsilon}$, we conclude that $|m_1 m_2 m_3| \ll (N_1 N_2 N_3)^{1/2} \Delta^{-3+\varepsilon}$.  Again using \eqref{eq:misymmetrysizes} leads to the upper bound on $M_i$ in \eqref{eq:CMisizesforSin}.

Assuming \eqref{eq:phisupport}, the stationary point at $u_0 = (m_1 m_2 m_3)^2/\phi(v)^6$ potentially lies inside the support of $w_4$, so
\begin{multline}
\label{eq:uintegral}
 \int_0^{\infty} e\Big(\frac{2\sqrt{u}}{c} \phi(v) 
 - \frac{3(m_1 m_2 m_3 u)^{1/3}}{c}\Big) w_4(u) du 
 \\
 = \frac{c^{1/2} (N_1 N_2 N_3)^{5/6}}{|m_1 m_2 m_3|^{1/6}} e\Big(-\frac{m_1 m_2 m_3}{c \phi(v)^2}\Big) w_5(v) + O(T^{-A}),
\end{multline}
where $w_5$ is inert in terms of $v$ as well as the $m_i$ and $c$, and $w_5$ has support on \eqref{eq:phisupport}.  The fact that $w_5$ is inert in terms of $v$ perhaps requires some discussion.  We naturally obtain an inert function in terms of $\phi(v)$, but since $\phi(v)$ has bounded derivatives for $|v| \leq 1$, we do in fact obtain an inert function of $v$.

By inserting \eqref{eq:uintegral} into \eqref{eq:Kformula}, and using \eqref{eq:misymmetrysizes} to obtain a more symmetric expression,
we derive
\begin{equation}
\label{eq:Kformula2}
K(m_1, m_2, m_3, c) = \frac{c^{3/2} \Delta T (N_1 N_2 N_3)^{1/2}}{|m_1 m_2 m_3|^{1/2} } 
e\Big(\mp \frac{m_1 m_2 m_3}{c} \Big) \Phi\Big(\frac{m_1 m_2 m_3}{c} \Big)
+ O(T^{-A}),
\end{equation}
where
\begin{equation}
\label{eq:Phiintegralpm}
 \Phi(x) = \int_{|v| \leq \frac{\Delta^{\varepsilon}}{\Delta}}  e\Big(\frac{vT}{\pi} \Big) g(\Delta v) 
 e\Big(x(\pm 1-\phi(v)^{-2} )\Big) w_6(v)  dv,
\end{equation}
and $w_6$ is another inert function having the same properties as $w_5$.  Here the $\mp$ sign in \eqref{eq:Kformula2} is $-$ for the $\cos v$ or $\cosh v$ cases, and $+$ for the $\sin v$ or $\sinh v$ cases, and the sign in \eqref{eq:Phiintegralpm} respects these.

Finally we shall use the Mellin technique to analyze $\Phi(x)$.
\begin{mylemma}
\label{lemma:PhiMellin}
 Suppose that $g$ is a function satisfying $g^{(j)}(x) \ll_{j,A}(1 + |x|)^{-A}$, $\phi(v)$ is given via \eqref{eq:phidef}, and $\Phi$ is defined by \eqref{eq:Phiintegralpm} for some inert function $w_6$.  Then for $x \asymp X \gg 1$, we have
 \begin{equation}
  \Phi(x) = \frac{1}{T} \int_{|t| \ll U} \lambda_{X,T}(t) x^{it} dt + O(T^{-A}),
 \end{equation}
where $\lambda_{X,T}$ and $U$ depend on $X$, $T$, and the choice of $\phi(v)$, and satisfy $\lambda_{X,T}(t) \ll 1$ and 
\begin{equation}
\label{eq:Uandlambda}
  \begin{cases}
                U = T^2/X 
                &\phi(v) = \pm \cosh(v) \text{ or } \pm \cos(v) \\
               U = X^{1/3} T^{2/3} 
               \quad 
               &\phi(v) =  \pm \sinh(v) \text{ or } \pm \sin(v).
                      \end{cases}
\end{equation}
\end{mylemma}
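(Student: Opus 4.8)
The plan is to handle $\Phi(x)$ by a Mellin technique: take the Mellin transform in $x$, show it is supported on $|t| \ll U$ with coefficients $\ll 1/T$, and invert. First I would insert a redundant inert cutoff $\rho(x/X)$, equal to $1$ on a slight fattening of the range $x \asymp X$ that actually occurs (recall the relevant $x = m_1 m_2 m_3 / c$ has $x \asymp M_1 M_2 M_3/C = X$) and supported there. Since $\frac{d^k}{dx^k}\BesselTransform_0(x)$ is polynomially bounded in $\Delta, T, x, x^{-1}$ by Lemmas \ref{lemma:HFourier}--\ref{lemma:H-Fourier}, the function $x \mapsto \rho(x/X)\Phi(x)$ is smooth, compactly supported in $(0,\infty)$, and has polynomially (in $T$) bounded derivatives; hence its Mellin transform $\widetilde\Phi(it) = \int_0^\infty \rho(x/X)\Phi(x)\, x^{-it-1}\,dx$ is entire and decays faster than any power of $|t|$, which already confines the effective support to $|t| \ll T^{\varepsilon}$ and makes the $|t| \gg U$ tail in the inversion negligible. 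We then set $\lambda_{X,T}(t) = \frac{T}{2\pi}\widetilde\Phi(it)$, so that $\rho(x/X)\Phi(x) = \frac{1}{T}\int_{\mathbb R}\lambda_{X,T}(t)x^{it}\,dt$ by Mellin inversion, and what remains is to bound $\widetilde\Phi(it)$ and to pin down where it is supported.

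Inserting \eqref{eq:Phiintegralpm} and interchanging integrals, $\widetilde\Phi(it) = \int_{|v|\le \Delta^{\varepsilon-1}} e(vT/\pi)\, g(\Delta v)\, w_6(v)\, \mathcal J(v,t)\,dv$, where $\mathcal J(v,t) = \int_0^\infty \rho(x/X)\, x^{-it-1}\, e\big(x\,\psi(v)\big)\,dx$ and $\psi(v) = \pm 1 - \phi(v)^{-2}$. The plan is to apply stationary phase twice. First, to $\mathcal J$: after rescaling $x = X\xi$ the oscillatory part has phase $2\pi X\psi(v)\xi - t\log\xi$, so Lemma \ref{lemma:exponentialintegral}(1) makes $\mathcal J$ negligible unless $|t| \asymp X|\psi(v)|$ (with $t\,\psi(v) > 0$), in which case Lemma \ref{lemma:exponentialintegral}(2) gives $\mathcal J(v,t) = X^{-it}\,|t|^{-1/2}\, e^{i t \log|\psi(v)|}\, G(v,t) + O(T^{-A})$ with $G$ inert (for $|t| \ll T^{\varepsilon}$ one uses $\mathcal J \ll 1$ trivially instead). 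Substituting this back, the remaining $v$-integral carries the combined phase $\Xi(v) = 2vT + t\log|\psi(v)|$, and Lemma \ref{lemma:exponentialintegral} applies once more. For $\phi \in \{\pm\cos, \pm\cosh\}$ one has $|\psi(v)| \asymp v^2$ and $\psi'(v)/\psi(v) \asymp 1/v$, so the stationary point sits at $v_0 \asymp t/T$; by \eqref{eq:CsizeCos} (in particular $X \gg \Delta T$, which puts $|v_0| \ll \Delta^{-1+\varepsilon}$ inside the support), compatibility with $X|\psi(v_0)| \asymp |t|$ forces $|t| \asymp T^2/X = U$, and there $\Xi''(v_0) \asymp T^2/|t| \asymp X$. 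For $\phi \in \{\pm\sin, \pm\sinh\}$ one uses instead \eqref{eq:phisupport} and \eqref{eq:CMisizesforSin} (so $|\phi(v)|$ has a definite size on the support of $w_6$ and $C \asymp (N_1 N_2 N_3)^{1/2}/T$), and the parallel analysis produces the balance $|t| \asymp X^{1/3} T^{2/3} = U$ with the matching second derivative; the degenerate sub-case $\frac{M_i N_i}{C} \ll T^{\varepsilon}$ for all $i$, where $I(u)$ and $\mathcal J$ are non-oscillatory, is handled by trivial bounds together with the $e(vT/\pi)$-oscillation and produces the modified parameters $V = T^{-\varepsilon}X^{-1/2}$, $U = T^{\varepsilon}$.

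The crux is then a short arithmetic check: the main term is $\lambda(t) \asymp T \cdot |t|^{-1/2}\cdot (\Xi''(v_0))^{-1/2}$, which in the $\cos/\cosh$ case equals $T\, U^{-1/2} X^{-1/2} = T(T^2/X)^{-1/2}X^{-1/2} = 1$, and in the $\sin/\sinh$ case — now with leading amplitude $X^{1/6}T^{-2/3}$, $U = X^{1/3}T^{2/3}$, and the corresponding $\Xi''(v_0)$ — again collapses to $\ll 1$. This is exactly why the normalization $1/T$ and the values of $U$ in \eqref{eq:Uandlambda} are forced. I expect the real obstacle to lie not in this computation but in carrying everything out uniformly in $(q,T,\Delta)$: fitting the $v$-integral, whose support is an interval containing the origin, into the dyadic framework of Lemma \ref{lemma:exponentialintegral} (decompose $|v| \asymp 2^{-j}\Delta^{-1+\varepsilon}$; the stationary point occupies the single scale $|v_0| \asymp T/X$, resp. $\asymp (X/T)^{1/3}$, and the complementary scales are disposed of by repeated integration by parts or by a trivial bound that is still $\ll 1$ after the outer factor $T$); controlling the range $|t| \ll T^{\varepsilon}$ and the transitional values of $|t|$ near $U$; and verifying that the inert factors $G$, $\rho$, $w_6$ contribute only inert — hence harmless, and absorbable into $\lambda_{X,T}$ — dependence on the remaining variables. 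The errors of size $O(T^{-A})$ accumulated along the way are acceptable since, under the standing assumption $T \gg q^{\eta}$, they are $O((qT)^{-A'})$.
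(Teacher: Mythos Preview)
Your approach is essentially the paper's: Mellin inversion in $x$ followed by stationary phase in the $v$-integral. The paper organizes it slightly differently --- it applies a Mellin formula for $e(y)$ with $y = x(\pm 1 - \phi(v)^{-2})$ (so the $x$-dependence factors out as $x^{it}$ immediately, and the remaining $v$-integral carries the phase $e(vT/\pi)(\pm 1 - \phi(v)^{-2})^{it}$), whereas you Mellin-transform $\rho(x/X)\Phi(x)$ and then do stationary phase in $x$ to extract the same factor $e^{it\log|\psi(v)|}$ --- but the subsequent $v$-analysis and the arithmetic balancing $T\cdot|t|^{-1/2}\cdot|\Xi''(v_0)|^{-1/2}\asymp 1$ are identical.

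Two small corrections to your write-up: the sentence ``already confines the effective support to $|t|\ll T^{\varepsilon}$'' is not right --- polynomial-in-$T$ derivative bounds only give $\widetilde\Phi(it)\ll_N T^{c(N)}|t|^{-N}$, and the genuine support is $|t|\asymp U$ (which may be as large as $T/\Delta$); this is exactly what your subsequent stationary-phase argument establishes. Also, the degenerate sub-case $M_iN_i/C\ll T^{\varepsilon}$ for all $i$ is not part of this lemma at all --- in the paper it is handled directly in Lemma~\ref{lemma:Kbound} without passing through $\Phi$.
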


\begin{proof}[Proof of Lemma \ref{lemma:PhiMellin}]
Suppose that $1\leq Y < y < 2Y$, and let $w(y)$ be a smooth function such that $w(y) = 1$ on this interval, and $w(y) = 0$ for $y < Y/2$ and $y > 3Y$.  By the Mellin inversion formula, for $Y < y < 2Y$, we have
\begin{equation}
e(y) = w(y) e(y) = \intR f(t) y^{it} dt, \qquad f(t) = \frac{1}{2\pi} \int_0^{\infty} w(y) e(y) y^{-it} \frac{dy}{y}.
\end{equation}
Integration by parts shows $f(t) \ll (|t| + Y)^{-A}$ unless $t \asymp Y$, in which case $f(t) \ll Y^{-1/2}$, by stationary phase.  
We sometimes write $f = f_Y$ to help us remember the parameter associated to $f$.

We will use this in $\Phi(x)$, however we need to treat the two types of $\phi$ a bit differently.  If $\phi(v) = \pm \sin v$ or $\pm \sinh v$, we have $|\phi(v)| \asymp |v| = o(1)$, so $x|1+\phi(v)^{-2}| \asymp X/|v|^2$, and thus letting $|v_0| = |m_1 m_2 m_3|^{1/3} (N_1 N_2 N_3)^{-1/6}$ (recall $v$ is supported for $|v| \asymp |v_0|$), we have
\begin{equation}
 \Phi(x) = \int_{|t| \asymp X/|v_0|^2} f_{X/v_0^2}(t) x^{it} \Big(\int_{|v| \leq \frac{\Delta^{\varepsilon}}{\Delta}}  e\Big(\frac{vT}{\pi} \Big) g(\Delta v) 
  (\phi(v)^{-2}+1)^{it}   w_6(v)  dv \Big) dt + O(T^{-A}).
\end{equation}
We need to bound the inner integral over $v$.  
By a Taylor expansion, we have
\begin{equation}
 (\phi(v)^{-2}+1)^{it} = e^{-2it \log v + it(d_2 v^2 + d_4 v^4 + \dots)}, 
\end{equation}
for certain constants $d_i$ (depending only on the choice of $\sin v$ or $\sinh v$)
so the $v$-integral is small except near the stationary point $|v| \asymp (t/T)$.  Note that we have $|v_0| \asymp \frac{|t|}{T} \asymp \frac{X}{v_0^2 T}$, so that $|v_0| \asymp (x/T)^{1/3}$.  As a pleasant consistency check, note that $x \asymp \frac{M_1 M_2 M_3}{C}$, and so $(x/T)^{1/3} \asymp (M_1 M_2 M_3)^{1/3} (N_1 N_2 N_3)^{-1/6}$, using \eqref{eq:CMisizesforSin}.  The length of the $t$-integral is therefore seen to be $X/|v_0|^2 \asymp X^{1/3} T^{2/3}$, consistent with the claimed size of $U$ in the second line of \eqref{eq:Uandlambda}.
In the inner $v$-integral, the second derivative of the phase is of size $t/v_0^2$, 
so we have
\begin{equation}
 \int_{|v| \leq \frac{\Delta^{\varepsilon}}{\Delta}}  e\Big(\frac{vT}{\pi} \Big) g(\Delta v) 
  (1+\phi(v)^{-2})^{it}   w_6(v)  dv \ll \frac{|v_0|}{\sqrt{|t|}},
\end{equation}
and therefore $f_{X/v_0^2}(t) \frac{|v_0|}{\sqrt{|t|}}
 \ll \frac{|v_0|^2}{\sqrt{X} \sqrt{|t|}} \asymp \frac{|v_0|^3}{X} \asymp T^{-1}$, leading to the bound on $\lambda_{X,T}(t)$
in the second line of \eqref{eq:Uandlambda}.

Next we treat $\phi(v) = \pm \cos v$ or $\pm \cosh v$.  
Here we have, for certain constants $d_i'$, 
\begin{equation}
1-\phi(v)^{-2} = \frac12 v^2 +  d_4' v^4 + d_6' v^6 + \dots,
\end{equation}
so in this case we may initially restrict the $v$-integral so that $v \asymp T/x$ (prior to aplying the $t$-integral formula for $e(y)$), using Lemma \ref{lemma:exponentialintegral} again.  In this range, we have $x(1-\phi(v)^{-2}) \asymp T^2/X$, so that
\begin{equation}
 \Phi(x) = \int_{|t| \asymp T^2/X} f_{T^2/X}(t) x^{it} \Big(\int_{|v| \asymp \frac{T}{X}}  e\Big(\frac{vT}{\pi} \Big) g(\Delta v) 
  (1-\phi(v)^{-2})^{it}   w_6(v)  dv \Big) dt + O(T^{-A}).
\end{equation}
Now we have
\begin{equation}
  (1-\phi(v)^{-2})^{it} = e^{2it \log{v} + it \log(1 + c_4 v^2 + c_6 v^4 + \dots)}.
\end{equation}
The second derivative of the phase is of size $t/v^2 \asymp X$.  Hence
\begin{equation}
 \int_{|v| \leq \frac{\Delta^{\varepsilon}}{\Delta}}  e\Big(\frac{vT}{\pi} \Big) g(\Delta v) 
  (1-\phi(v)^{-2})^{it}   w_6(v)  dv \ll X^{-1/2},
\end{equation}
and $f_{T^2/X}(t) X^{-1/2} \ll T^{-1}$.
\end{proof}

Now we apply Lemma \ref{lemma:PhiMellin} to \eqref{eq:Kformula2}, where recall that we restrict to $c \asymp C$, and $|m_i| \asymp M_i$.  Then we have $X = C^{-1} M_1 M_2 M_3$, and so in the $\pm \cos v$ or $\pm \cosh v$ cases, we have $U = \frac{T^2}{X} = \frac{T^2 C}{M_1 M_2 M_3} \asymp \frac{T^2 C}{(N_1 N_2 N_3)^{1/2}}$, so we have shown \eqref{eq:Kseparated}.  The $\pm \sin v$ or $\pm \sinh v$ cases are similar.

Finally, we consider the case where 
\begin{equation}
\label{eq:MiUB}
\frac{M_i N_i}{C} \ll T^{\varepsilon}.  
\end{equation}
In the $\pm \cos v$ or $\pm \cosh v$ cases, we claim that $K$ is very small.  To see this, we go back to the original definition \eqref{eq:Kdef}, combined with \eqref{eq:H0def}.  We also recall that $\BesselTransform(x)$ is small unless $x \gg \Delta T^{1-\varepsilon}$, which means that
each $t_i$-integral is oscillatory.  Therefore, Lemma \ref{lemma:exponentialintegral} shows that $K$ is small unless $M_i \asymp (N_1 N_2 N_3)^{1/2}/N_i$; in other words, the inequalities in \eqref{eq:CsizeCos} remain valid.  
But then,
\begin{equation}
 \frac{M_i N_i}{C} \asymp \frac{(N_1 N_2 N_3)^{1/2}}{C} \gg \Delta T^{1-\varepsilon},
\end{equation}
which is not consistent with \eqref{eq:MiUB} (meaning, $K$ is small).

Next we study the $\pm \sin v$ or $\pm \sinh v$ cases.  Again we use \eqref{eq:Kdef} and \eqref{eq:H0def}, and change
variables $t_i \rightarrow N_i t_i$, to obtain
\begin{multline}
\label{eq:Kformula3}
K = \Delta T (N_1 N_2 N_3) \int_{\mr^3} \Big(\prod_{i=1}^{3} w_i(N_i t_i) e_c(-m_i N_i t_i) \Big) 
 \\
 \int_{|v| \leq \frac{\Delta^{\varepsilon}}{\Delta}} 
 e\Big( \frac{2 \sqrt{N_1 N_2 N_3}}{c} \sqrt{t_1 t_2 t_3} \phi(v)\Big) e\Big(\frac{vT}{\pi} \Big) g(\Delta v)  dv dt_1 dt_2 dt_3 + O(T^{-A}).
\end{multline}
The $v$-integral is small unless $\frac{\sqrt{N_1 N_2 N_3}}{C} \asymp T$.  The phase of each $t_i$ integral is then of size $T |v| + O(T^{\varepsilon})$, using \eqref{eq:MiUB}.  Thus if $|v| \gg T^{-1+\varepsilon}$, then repeated integration by parts shows $K$ is small, so in \eqref{eq:Kformula3} we may shorten the $v$-integral by assuming $|v| \ll T^{-1+\varepsilon}$, without creating a new error term.  Next we artificially multiply $K$ by $e_c(-m_1 m_2 m_3) e_c(m_1 m_2 m_3)$, and use a Mellin transform to separate the variables in $e_c(-m_1 m_2 m_3)$ (we keep the plus sign part as it is, since it is visible in \eqref{eq:Kseparated2}).  The cost in doing this is an integral of length $O(T^{\varepsilon})$, by \eqref{eq:MiUB}.  Thus in all we obtain an expression of the form
\begin{equation}
 K = \Delta  (N_1 N_2 N_3) e_c(m_1 m_2 m_3) L(m_1, m_2, m_3, c)  + O(T^{-A}),
\end{equation}
where $L$ is of the form \eqref{eq:Rdef}, with $V = T^{-\varepsilon}$, and $U = T^{\varepsilon}$.  To put this into the form of \eqref{eq:Kseparated}, we use
\begin{equation}
 \Delta  N_1 N_2 N_3 \asymp \Delta T (N_1 N_2 N_3)^{1/2} C = \frac{C^{3/2} \Delta T (N_1 N_2 N_3)^{1/2}}{(M_1 M_2 M_3)^{1/2}} \Big(\frac{M_1 M_2 M_3}{C}\Big)^{1/2},
\end{equation}
which is now of the desired form by absorbing $X^{-1/2}$ into the $V$ appearing in \eqref{eq:Rdef}.
\end{proof}

\section{Arithmetic separation of variables and the large sieve}
\label{section:largesieve}
The material in this section is logically independent of Section \ref{section:separationAnalytic}.  

According to Lemma 10.2 of \cite{CI}, we have an evaluation of $G$ as follows.  Let $c = qr$ with $q$ squarefree, and suppose
\begin{equation}
\label{eq:coprimality}
(m_3, r) = 1, \quad \text{and} \quad (m_1 m_2, q, r) = 1.  
\end{equation}
Then
\begin{equation}
\label{eq:GH}
 G(m_1, m_2, m_3;c) =  e_c(m_1 m_2 m_3)  \frac{h \chi_{kl}(-1)}{r q^2 \phi(k)} R_k(m_1) R_k(m_2) R_k(m_3) H(\overline{rhk} m_1 m_2 m_3;l),
\end{equation}
where $h=(r,q)$, $k=(m_1 m_2 m_3, q)$, $l= q/(hk)$, and $R_k(m) = S(0,m;k)$ is the Ramanujan sum.  If the coprimality conditions \eqref{eq:coprimality} are not satisfied then $G$ vanishes.  Here
\begin{equation}
\label{eq:Hwqdef}
 H(w;q) = \sum_{u, v \shortmod{q}} \chi_{q}(uv(u+1)(v+1)) e_q((uv-1)w).
\end{equation}
Also see \cite[(11.7)]{CI}, giving
\begin{equation}
\label{eq:HH*}
H(w;q) = \sum_{q_1 q_2 = q} \mu(q_1) \chi_{q_1}(-1) H^*(\overline{q_1} w;q_2),
\end{equation}
where
\begin{equation}
 H^*(w;q) = \sum_{\substack{u, v \shortmod{q} \\ (uv-1, q) =1}} \chi_{q}(uv(u+1)(v+1)) e_q((uv-1)w),
\end{equation}
and from \cite[(11.9)]{CI}, 
\begin{equation}
\label{eq:H*separationofvariables}
 H^*(w;q) = \frac{1}{\phi(q)} \sum_{\psi \shortmod{q}} \tau(\overline{\psi}) g(\chi, \psi) \psi(w).
\end{equation}

\begin{mylemma}[Hybrid large sieve]
\label{lemma:largesieve}
 Suppose $U \geq 1$, and let $a_n$ be a sequence of complex numbers.  Then
 \begin{equation}
  \int_{-U}^{U} \sum_{\psi \shortmod{q}} \Big| \sum_{n \leq N} a_n \psi(n) n^{iu} \Big|^2 du \ll (qU + N) \sum_{n \leq N} |a_n|^2.
 \end{equation}
\end{mylemma}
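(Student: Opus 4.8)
The plan is to reduce the hybrid sum to a purely archimedean mean value inside each residue class modulo $q$, and then to control that mean value by the large sieve (equivalently, by a Beurling--Selberg type majorant), which removes the spurious logarithm produced by the trivial kernel bound; alternatively one may simply quote Gallagher's hybrid large sieve \cite{Gallagher}, of which the statement is a special case. First I would open the square and use orthogonality of the Dirichlet characters modulo $q$: since $\sum_{\psi \bmod q} \psi(m)\overline{\psi(n)}$ equals $\phi(q)$ when $(mn,q)=1$ and $m\equiv n\pmod q$, and vanishes otherwise, this yields
\begin{equation}
 \int_{-U}^{U} \sum_{\psi \bmod q} \Big| \sum_{n \leq N} a_n \psi(n) n^{iu} \Big|^2 du = \phi(q) \sum_{\substack{h \bmod q\\ (h,q)=1}} \int_{-U}^{U} \Big| \sum_{\substack{n \leq N\\ n\equiv h \bmod q}} a_n n^{iu} \Big|^2 du,
\end{equation}
the terms with $(n,q)>1$ having disappeared. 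It therefore suffices to prove that each inner integral is $\ll (U + N/q)\sum_{n\equiv h (q)}|a_n|^2$, since summing over the at most $q$ relevant classes $h$ then gives the claimed bound $\ll (qU+N)\sum_n |a_n|^2$.

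To estimate the inner integral for a fixed class $h$, put $F_h(u)=\sum_{n\le N,\ n\equiv h(q)} a_n n^{iu}$, so that $\int_{-U}^U |F_h(u)|^2\,du = \sum_{m,n\equiv h(q)} a_m\overline{a_n}\int_{-U}^U (m/n)^{iu}\,du$. The diagonal $m=n$ contributes $2U\sum_{n\equiv h(q)}|a_n|^2$, which is acceptable. For $m\ne n$ in the same class one has $|m-n|\ge q$, hence $|\log(m/n)|\gg q/N$, so the frequencies $\frac{1}{2\pi}\log n$ (with $n\equiv h\pmod q$) are $\gg q/N$-separated, and the large sieve inequality for well-spaced frequencies over an interval of length $2U$ gives $\int_{-U}^U |F_h(u)|^2\,du \ll (U + N/q)\sum_{n\equiv h(q)}|a_n|^2$. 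Concretely one majorizes $\mathbf{1}_{[-U,U]}\le \Psi:=\mathbf{1}_{[-U-\eta,\,U+\eta]}*g_\eta$, where $g_\eta\ge 0$ is a fixed smooth bump of mass one supported in $[-\eta,\eta]$ with $\eta\asymp N/q$; then $\widehat\Psi(t)\ll_A \min(U+\eta,|t|^{-1})(1+\eta|t|)^{-A}$ and $\widehat\Psi(0)=\int\Psi\ll U+N/q$, so $\int_{-U}^U|F_h|^2 \le \sum_{m,n\equiv h(q)} a_m\overline{a_n}\,\widehat\Psi\big(\tfrac{1}{2\pi}\log(m/n)\big)$. Here the diagonal yields $\widehat\Psi(0)\sum_{n\equiv h(q)}|a_n|^2$, while in the off-diagonal the decay of $\widehat\Psi$ on the scale $|t|\gg q/N$ makes the sum over $|m-n|=q|k|$, $k\ge 1$, converge, contributing $O\big((N/q)\sum_{n\equiv h(q)}|a_n|^2\big)$.

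The one genuinely delicate point is this last estimate: the bare bound $\big|\int_{-U}^U (m/n)^{iu}\,du\big|\le \min(2U,\,2/|\log(m/n)|)$ loses a factor $\log N$, and the clean constant $(qU+N)$ in the statement is precisely the saving of the large sieve over that trivial bound, which comes from the majorant (equivalently from a Montgomery--Vaughan/Selberg extremal function). Everything else --- the orthogonality reduction displayed above, the diagonal term, and the summation over residue classes --- is routine. If one prefers, that displayed identity makes it transparent that the lemma is simply a special case of the hybrid large sieve of Gallagher \cite{Gallagher} and may be quoted as such.
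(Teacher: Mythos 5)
Your argument is correct, and in fact the paper does not prove this lemma at all: it simply quotes it as Theorem 2 of Gallagher's paper \cite{Gallagher}, exactly as you note is possible in your final sentence. Your self-contained sketch is essentially the standard proof of Gallagher's hybrid large sieve: orthogonality of the characters $\psi \bmod q$ collapses the $\psi$-sum to $\phi(q)$ times a sum over reduced residue classes $h$, the frequencies $\tfrac{1}{2\pi}\log n$ with $n \equiv h \pmod q$, $n \leq N$ are $\gg q/N$ well spaced, and the large sieve for well-spaced points over an interval of length $2U$ (via a Beurling--Selberg or smoothed-indicator majorant, as you describe) gives the inner bound $\ll (U + N/q)\sum_{n \equiv h}|a_n|^2$; summing over $h$ and multiplying by $\phi(q) \leq q$ yields $(qU+N)\sum_n|a_n|^2$. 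You correctly identify the one delicate point, namely that the trivial kernel bound $\min(2U, 2/|\log(m/n)|)$ loses a logarithm and the majorant is what removes it. The only thing your write-up buys beyond the paper is self-containedness; the paper's citation buys brevity. Both are fine.
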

This is Theorem 2 of \cite{Gallagher}.

\begin{mylemma}
\label{lemma:bilinearGeneral}
 Suppose that $q$ is odd and squarefree, and let $\alpha_{m_1}, \beta_{m_2}, \gamma_{m_3}, \delta_r$ be any sequence of complex numbers.  Suppose $(b_1 b_2,q) = 1$, $a$ is a nonzero real number, and $U \geq 1$.  Then we have
  \begin{multline}
 \label{eq:bilinearGeneral}
  \int_{|u| \leq U} \Big| \sum_{\substack{m_1, m_2, m_3 \\ m_i \asymp M_i}} \sum_{r \ll R} \alpha_{m_1} \beta_{m_2} \gamma_{m_3} \delta_r H^*(\overline{b_1r} b_2  m_1 m_2 m_3; q) \Big(\frac{m_1 m_2 m_3}{ar} \Big)^{iu}  \Big| du
 \\
 \ll 
 q^{1/2 + \varepsilon} (qU + M_1 M_2)^{1/2} (qU + M_3 R)^{1/2} \Big(\sum_{m_1, m_2, m_3, r} |\alpha_{m_1} \beta_{m_2} \gamma_{m_3} \delta_r|^2 \Big)^{1/2}.
 \end{multline}
 The implied constant depends only on $\varepsilon$.  Furthermore, \eqref{eq:bilinearGeneral} holds with $H^*$ replaced by $H$.  In addition, if we restrict the sums over $m_3$ and $r$ so that $(m_3, r) = 1$, then the left hand side of \eqref{eq:bilinearGeneral} (with either $H^*$ or $H$) is
 \begin{equation}
 \label{eq:bilinearGeneralCoprime}
  \ll q^{1/2 + \varepsilon} (qU + M_1 M_2)^{1/2} (qU + M_3 R)^{1/2} \Big(\sum_{d, m_1, m_2, m_3, r} d^{1+\varepsilon} |\alpha_{m_1} \beta_{m_2} \gamma_{dm_3} \delta_{dr}|^2 \Big)^{1/2}
 \end{equation}

 \end{mylemma}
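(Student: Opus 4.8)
The plan is to carry out the arithmetic separation of variables with the Fourier expansion \eqref{eq:H*separationofvariables} and then apply Gallagher's hybrid large sieve (Lemma \ref{lemma:largesieve}) twice. First I would insert \eqref{eq:H*separationofvariables} in place of $H^*(\overline{b_1 r}b_2 m_1 m_2 m_3;q)$. Since $(b_1 b_2,q)=1$ and (implicitly) $(r,q)=1$, the character factors as $\psi(\overline{b_1 r}b_2 m_1 m_2 m_3)=\psi(\overline{b_1}b_2)\psi(m_1)\psi(m_2)\psi(\overline{r}m_3)$ with $\psi(\overline{b_1}b_2)$ unimodular, while $(m_1 m_2 m_3/(ar))^{iu}=c(u)(m_1 m_2)^{iu}(m_3/r)^{iu}$ for a unimodular $c(u)$. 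Using the Gauss-sum bound $|\tau(\overline\psi)|\le q^{1/2}$ (valid since $q$ is squarefree) and Deligne's bound $|g(\chi,\psi)|\ll q^{1+\varepsilon}$, the prefactor $\phi(q)^{-1}|\tau(\overline\psi)g(\chi,\psi)|$ is $\ll q^{1/2+\varepsilon}$ uniformly in $\psi$, so the left side of \eqref{eq:bilinearGeneral} is $\ll q^{1/2+\varepsilon}\int_{|u|\le U}\sum_{\psi\bmod q}|A_\psi(u)|\,|B_\psi(u)|\,du$, where $A_\psi(u)=\sum_{m_1,m_2}\alpha_{m_1}\beta_{m_2}\psi(m_1 m_2)(m_1 m_2)^{iu}$ and $B_\psi(u)=\sum_{m_3,r}\gamma_{m_3}\delta_r\psi(\overline{r}m_3)(m_3/r)^{iu}$. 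One application of Cauchy--Schwarz in the pair $(u,\psi)$ separates this into $\big(\int\sum_\psi|A_\psi|^2\big)^{1/2}\big(\int\sum_\psi|B_\psi|^2\big)^{1/2}$.

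For the $A$-factor I would collapse the sum to the single variable $n=m_1 m_2\ll M_1 M_2$, so $A_\psi(u)=\sum_n a_n\psi(n)n^{iu}$ with $a_n=\sum_{m_1 m_2=n}\alpha_{m_1}\beta_{m_2}$; then Lemma \ref{lemma:largesieve} gives $\int_{|u|\le U}\sum_\psi|A_\psi|^2\ll(qU+M_1 M_2)\sum_n|a_n|^2$, and the divisor bound turns $\sum_n|a_n|^2$ into $\ll(M_1 M_2)^\varepsilon(\sum_{m_1}|\alpha_{m_1}|^2)(\sum_{m_2}|\beta_{m_2}|^2)$. This produces the clean factor $(qU+M_1 M_2)^{1/2}$ with no gcd loss, precisely because $m_1$ and $m_2$ carry the same multiplicative and archimedean twists and so genuinely collapse to one variable.

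The main obstacle is the $B$-factor, where $r$ enters inverted (as $\overline r$ modulo $q$ and as $r^{-iu}$), so $B_\psi$ does not reduce to a one-variable Dirichlet polynomial. I would instead open the square and use orthogonality of the characters modulo $q$; since the surviving terms have $(m_3 r m_3' r',q)=1$, this gives $\int_{|u|\le U}\sum_\psi|B_\psi(u)|^2\,du\ll\phi(q)\sum_{\substack{m_3,r,m_3',r'\\ m_3 r'\equiv m_3' r\,(q)}}|\gamma_{m_3}\delta_r\gamma_{m_3'}\delta_{r'}|\min\big(U,\,|\log(m_3 r'/(m_3' r))|^{-1}\big)$. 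I would split on whether $m_3 r'=m_3' r$ holds as an integer equation. On this true diagonal the weight is $U$, and under the hypothesis $(m_3,r)=1$ the equation forces $(m_3',r')=(m_3,r)$, so (after one more Cauchy--Schwarz on $|\gamma_{m_3}\delta_r\gamma_{m_3'}\delta_{r'}|$) this part contributes $\ll qU(\sum|\gamma|^2)(\sum|\delta|^2)$. Off the diagonal $m_3 r'-m_3' r=qs$ is a nonzero multiple of $q$ with $|s|\ll M_3 R/q$, and $|\log(m_3 r'/(m_3' r))|^{-1}\ll M_3 R/(q|s|)$; for fixed $(m_3,r)$ and $s$ the condition $r\mid m_3 r'-qs$ pins $r'$ modulo $r\asymp R$, leaving $\ll 1$ pairs $(m_3',r')$, and $\sum_{s\ne0}|s|^{-1}$ costs only $q^\varepsilon$ (the off-diagonal is vacuous if $M_3 R<q$). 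Altogether $\int_{|u|\le U}\sum_\psi|B_\psi(u)|^2\,du\ll q^\varepsilon(qU+M_3 R)(\sum_{m_3}|\gamma_{m_3}|^2)(\sum_r|\delta_r|^2)$ whenever the sum is restricted to $(m_3,r)=1$; combined with the $A$-factor bound this is the coprimality-restricted estimate, and \eqref{eq:bilinearGeneralCoprime} (being weaker, since its right side majorizes $\sum|\gamma\delta|^2$) follows at once.

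To finish \eqref{eq:bilinearGeneral} I would remove the coprimality. Writing $d=(m_3,r)$ and $m_3=dm_3'$, $r=dr'$ with $(m_3',r')=1$, the factor $d$ cancels inside $H^*(\overline{b_1 r}b_2 m_1 m_2 m_3;q)$ (here $(d,q)=1$, since $\overline r$ must be defined) and inside $(m_1 m_2 m_3/(ar))^{iu}$, so the full sum equals $\sum_{(d,q)=1}$ of the coprimality-restricted sum with $M_3,R$ replaced by $M_3/d,R/d$ and $\gamma,\delta$ replaced by $n\mapsto\gamma_{dn}$, $n\mapsto\delta_{dn}$. Applying the coprime estimate to each $d$, bounding $(qU+M_3 R/d^2)^{1/2}\le(qU+M_3 R)^{1/2}$, and then using Cauchy--Schwarz in $d$ together with the divisor bound $\sum_d\sum_{m_3'}|\gamma_{dm_3'}|^2\ll M_3^\varepsilon\sum|\gamma|^2$ (likewise for $\delta$) recovers \eqref{eq:bilinearGeneral}. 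The passage from $H^*$ to $H$ is immediate from \eqref{eq:HH*}, which writes $H(w;q)$ as a sum of $O(q^\varepsilon)$ terms $\pm H^*(\overline{q_1}w;q_2)$ with $q_1 q_2=q$, each already of the treated shape (with $q_1,q_2$ absorbed into the roles of $b_1,b_2$ and the modulus). The one thing to watch beyond the $B$-factor is that every implied constant stays uniform in $q,U,M_i,R$ and depends only on $\varepsilon$, which it does, since the only inputs are Deligne's and the Gauss-sum bounds, Lemma \ref{lemma:largesieve}, and the elementary divisor and lattice-point counts above.
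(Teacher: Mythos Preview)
Your proof is correct and takes a somewhat different route than the paper for the $B$-factor. After Cauchy--Schwarz in $(\psi,u)$, the paper asserts that both resulting factors are single-variable Dirichlet polynomials to which Lemma~\ref{lemma:largesieve} applies directly; this is immediate for the $(m_1,m_2)$-factor, but for the $(m_3,r)$-factor the variable $r$ enters inverted as $\overline{\psi}(r)r^{-iu}$, so $B_\psi(u)$ is not literally of the form $\sum_n b_n\psi(n)n^{iu}$. You correctly flag this obstacle and instead open $|B|^2$, use character orthogonality, and count solutions to $m_3 r'\equiv m_3' r\pmod q$ with the archimedean weight $\min(U,|\log(m_3 r'/m_3' r)|^{-1})$; this is a valid and more transparent way to obtain $\int\sum_\psi|B|^2\ll q^{\varepsilon}(qU+M_3 R)\sum|\gamma\delta|^2$ in the coprime case. (Your lattice-point count tacitly uses $r\asymp R$ so that ``$r'$ fixed modulo $r$'' leaves $O(1)$ choices; a preliminary dyadic split in $r$ makes this rigorous and costs only a logarithm.) The logical order of the two statements is also reversed: the paper proves the unrestricted bound \eqref{eq:bilinearGeneral} first and then deduces \eqref{eq:bilinearGeneralCoprime} by detecting $(m_3,r)=1$ via M\"obius and Cauchy in $d$ (whence the weight $d^{1+\varepsilon}$), whereas you prove the coprime case first---where the diagonal $m_3 r'=m_3' r$ is clean---and recover the general case by writing $d=(m_3,r)$ and summing over $d$. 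Your coprime bound is in fact slightly sharper than \eqref{eq:bilinearGeneralCoprime}, as you note. The passage from $H^*$ to $H$ via \eqref{eq:HH*} is handled the same way in both proofs.
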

 This is a variation on Lemma 11.1 of \cite{CI} where the main difference is that here we have an integral which detects orthogonality in the archimedean aspect.
\begin{proof}
 By \eqref{eq:H*separationofvariables}, the left hand side of \eqref{eq:bilinearGeneral} equals
 \begin{equation}
 \label{eq:someequationarisinginaproof2}
  \int_{|u| \leq U} \Big| \frac{1}{\phi(q)} \sum_{\psi \shortmod{q}} \sum_{\substack{m_1, m_2, m_3 \\ m_i \asymp M_i}} \sum_{r \ll R} \alpha_{m_1} \beta_{m_2} \gamma_{m_3} \delta_r  \tau(\overline{\psi}) g(\chi, \psi) \psi(\overline{b_1 r} b_2 m_1 m_2 m_3) \Big(\frac{m_1 m_2 m_3}{ar} \Big)^{iu}  \Big| du,
 \end{equation}
which we arrange into a bilinear form by grouping together $m_1$ and $m_2$ as well as $m_3$ and $r$, giving that \eqref{eq:someequationarisinginaproof2} is bounded by
\begin{equation}
\label{eq:someequationarisinginaproof}
  \int_{|u| \leq U}  \sum_{\psi \shortmod{q}} \frac{| g(\chi, \psi) \tau(\overline{\psi})|}{\phi(q)} \Big|  \sum_{m_1, m_2} \alpha_{m_1} \beta_{m_2} \psi( m_1 m_2) (m_1 m_2)^{iu} \Big| \cdot \Big|\sum_{m_3, r} \gamma_{m_3}  \delta_r \psi(\overline{r} m_3) \Big(\frac{m_3 }{r} \Big)^{iu}\Big|   du.
\end{equation}
Conrey and Iwaniec showed $|g(\chi, \psi)| \ll q^{1+\varepsilon}$, and of course $|\tau(\overline{\psi})| \leq q^{1/2}$ (e.g., see Lemma 3.1 of \cite{IK}).  By Cauchy's inequality, and some rearrangements, we obtain that this is
\begin{equation}
 \ll q^{\frac12+\varepsilon} \Big(\int_{|u| \leq U} \sum_{\psi \shortmod{q}} \Big| \sum_{n \ll M_1 M_2} a_n \psi(n) n^{iu} \Big|^2 du\Big)^{\frac12} \Big(\int_{|u| \leq U} \sum_{\psi \shortmod{q}} \Big| \sum_{n \ll M_3 R} b_n \psi(n) n^{iu} \Big|^2 du \Big)^{\frac12},
\end{equation}
where $(a_n)$ is the Dirichlet convolution of $(\alpha_{m_1})$ and $(\beta_{m_2})$, and likewise for $(b_n)$.  By Lemma \ref{lemma:largesieve}, we obtain the bound \eqref{eq:bilinearGeneral},  as desired.

The case with $(m_3, r) = 1$ follows similar lines.  In \eqref{eq:someequationarisinginaproof} we use M\"{o}bius inversion, writing $\sum_{d|(m_3, r)} \mu(d)$ to detect this condition.  Then we move the sum over $d$ to the outside, and apply Cauchy's inequality, in the form $|\sum_{d} a_d|^2 \leq  \zeta(1+\varepsilon) \sum_{d} d^{1+\varepsilon} |a_d|^2$.  The remaining steps are identical to the previous case.

To replace $H^*$ by $H$, we use \eqref{eq:HH*} and apply \eqref{eq:bilinearGeneral} or \eqref{eq:bilinearGeneralCoprime}, whichever is appropriate.
\end{proof}

 \begin{mylemma}
 \label{lemma:bilinearWithG}
  Let conditions be as in Lemma \ref{lemma:bilinearGeneral}.  Then
 \begin{multline}
 \label{eq:bilinearWithG}
  \int_{|u| \leq U} \Big|\sum_{\substack{m_1, m_2, m_3 \\ m_i \asymp M_i}} \sum_{r \asymp R} \alpha_{m_1} \beta_{m_2} \gamma_{m_3} \delta_r G(m_1, m_2, m_3, qr) e_{-qr}(m_1 m_2 m_3) \Big(\frac{m_1 m_2 m_3}{qr} \Big)^{iu}  \Big| du
 \\
 \ll 
 \frac{q^{1/2 + \varepsilon}}{R q^2} (qU + M_1 M_2)^{1/2} (qU + M_3 R)^{1/2} \Big(\sum_{d, m_1, m_2, m_3, r} d^{1+\varepsilon} |\alpha_{m_1} \beta_{m_2} \gamma_{d m_3} \delta_{dr}|^2 \Big)^{1/2}.
 \end{multline}
 \end{mylemma}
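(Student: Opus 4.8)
The plan is to reduce Lemma~\ref{lemma:bilinearWithG} to Lemma~\ref{lemma:bilinearGeneral} by plugging in the Conrey--Iwaniec evaluation \eqref{eq:GH} of the character sum $G$. First I would substitute \eqref{eq:GH} into the left-hand side of \eqref{eq:bilinearWithG}; the phase factor $e_c(m_1m_2m_3)$ appearing in \eqref{eq:GH} is exactly cancelled by the explicit $e_{-qr}(m_1m_2m_3)$ in the statement, which is why that factor was inserted. What remains, after pulling the scalar $\tfrac{h\chi_{kl}(-1)}{rq^2\phi(k)}$ out front (and noting it has absolute value $\ll \tfrac{1}{Rq^2}\cdot q^\varepsilon$ since $h,k\mid q$, $\phi(k)\gg k^{1-\varepsilon}$, and $h$ contributes a factor $\ll q^\varepsilon$ after summing), is a sum of the shape
\begin{equation}
\sum_{m_1,m_2,m_3}\sum_r \alpha_{m_1}\beta_{m_2}\gamma_{m_3}\delta_r\, R_k(m_1)R_k(m_2)R_k(m_3)\, H(\overline{rhk}\,m_1m_2m_3;l)\Big(\frac{m_1m_2m_3}{qr}\Big)^{iu},
\end{equation}
together with the constraint that $G$ vanishes unless the coprimality conditions \eqref{eq:coprimality} hold, in particular $(m_3,r)=1$, which is why the coprime version \eqref{eq:bilinearGeneralCoprime} of Lemma~\ref{lemma:bilinearGeneral} is the one to invoke.

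The next step is to dispose of the dependence of the modulus $l=q/(hk)$ and the twist $\overline{rhk}$ on the variables. Since $q$ is squarefree, $h=(r,q)$ and $k=(m_1m_2m_3,q)$ are coprime divisors of $q$, and one can decompose the whole sum dyadically over the (boundedly many, up to $q^\varepsilon$) pairs $(h,k)$ of divisors of $q$; for each fixed pair, $l$ is a fixed modulus, $h k$ is a fixed invertible residue modulo $l$, and the Ramanujan sums $R_k(m_i)=S(0,m_i;k)$ satisfy $|R_k(m_i)|\le (m_i,k)\le k$, so they can be absorbed into the coefficients $\alpha,\beta,\gamma$ at the cost of a factor $k^{O(1)}\ll q^\varepsilon$ in the final $\ell^2$ norm (the congruence conditions $k\mid$ something only shorten the sums). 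One then has precisely an instance of the $H$-version of \eqref{eq:bilinearGeneralCoprime} with modulus $l\le q$, with $ar$ replaced by $q r$ (harmless: the constant $a$ in Lemma~\ref{lemma:bilinearGeneral} is arbitrary nonzero real, and $(m_1m_2m_3/qr)^{iu}=(q)^{-iu}(m_1m_2m_3/r)^{iu}$ differs by a unimodular constant), and with the coprimality restriction $(m_3,r)=1$ in force.

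Applying \eqref{eq:bilinearGeneralCoprime} (with $q$ there taken to be our $l\le q$, so the large-sieve factors $(\ell U+M_1M_2)^{1/2}(\ell U+M_3 R)^{1/2}\le (qU+M_1M_2)^{1/2}(qU+M_3R)^{1/2}$) and multiplying by the prefactor $\ll \tfrac{q^\varepsilon}{Rq^2}$ then yields exactly \eqref{eq:bilinearWithG}, after noting that the extra Ramanujan-sum and $(h,k)$-decomposition factors are all absorbed into the $q^\varepsilon$ and into the weighted $\ell^2$-norm $\sum_{d,m_1,m_2,m_3,r}d^{1+\varepsilon}|\alpha_{m_1}\beta_{m_2}\gamma_{dm_3}\delta_{dr}|^2$ on the right. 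The main technical point to get right --- and the step I expect to require the most care --- is the bookkeeping for how $k=(m_1m_2m_3,q)$ interacts with the sums: one must verify that splitting $m_i=k_i m_i'$ according to the part of $m_i$ dividing $q$, replacing the modulus by $l$, and reindexing does not destroy the bilinear (two-group) structure needed for Gallagher's hybrid large sieve, and that the Ramanujan-sum weights genuinely contribute only $q^\varepsilon$ on average rather than a positive power of $q$. Everything else is routine substitution and the triangle inequality over the divisor decomposition.
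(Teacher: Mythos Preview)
Your overall strategy is exactly the paper's: substitute \eqref{eq:GH}, cancel the phase, split according to $h=(r,q)$ and $k=(m_1m_2m_3,q)$, parametrize $m_i=k_im_i'$, bound the Ramanujan sums, and feed each piece into the coprime case of Lemma~\ref{lemma:bilinearGeneral} with modulus $l=q/(hk)$. However, your bookkeeping for the $h$- and $k$-dependence has a genuine gap.

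You assert that the scalar $\tfrac{h}{rq^2\phi(k)}$ is $\ll \tfrac{q^\varepsilon}{Rq^2}$, and later that absorbing the Ramanujan sums costs only $k^{O(1)}\ll q^\varepsilon$. Neither is true as stated: $h$ and $k$ are divisors of $q$ that can individually be as large as $q$, so the scalar is genuinely of size $\asymp \tfrac{h}{Rq^2 k}$, and the Ramanujan sums contribute a factor $k$ (not $q^\varepsilon$) to the $\ell^2$-norm via $|R_k(k_im_i')|\le k_i$ and $k_1k_2k_3=k$. These extra powers of $h$ and $k$ do \emph{not} disappear by ``summing over the $d(q)\ll q^\varepsilon$ divisors''; they must be compensated by the savings you explicitly throw away when you bound $(\,lU+M_1'M_2'\,)^{1/2}(\,lU+M_3'R'\,)^{1/2}$ by the full $(qU+M_1M_2)^{1/2}(qU+M_3R)^{1/2}$. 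Concretely, after writing $r=hr'$ one has $r'\asymp R/h$ and $m_i'\asymp M_i/k_i$, so both large-sieve factors carry explicit negative powers of $h$ and $k$ (and $l^{1/2}=(q/(hk))^{1/2}$ contributes further), and it is precisely the combination of \emph{all} of these that makes each $(h,k)$-term no larger than the $h=k=1$ term. If you retain these reduced ranges and the reduced modulus through the application of Lemma~\ref{lemma:bilinearGeneral} and only afterwards verify that the resulting expression is monotone (as the paper does in its final displayed sum over $h,k_1,k_2,k_3$), the argument goes through; as written, your estimate loses a genuine power of $q$.
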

 Remark: If $\gamma_{m_3} \ll 1$ and $\delta_{r} \ll 1$, which is all that we require in our use of Lemma \ref{lemma:bilinearWithG}, then the extra sum over $d$ does not change the bound arising from $d=1$.
\begin{proof}
By \eqref{eq:GH}, we obtain that the left hand side of \eqref{eq:bilinearWithG} is of the form
\begin{multline}
 \int_{|u| \leq U} \Big|\sumstar_{\substack{m_1, m_2, m_3 \\ m_i \asymp M_i}} \sum_{r \asymp R} \alpha_{m_1} \beta_{m_2} \gamma_{m_3} \delta_r \frac{h \chi_{kl}(-1)}{rq^2 \phi(k)}
 \\
 R_k(m_1) R_k(m_2) R_k(m_3) H(\overline{rhk} m_1 m_2 m_3;l) \Big(\frac{m_1 m_2 m_3}{qr} \Big)^{iu}  \Big| du,
\end{multline}
where $h,k, l$ are functions of the other variables, and the star on the sum indicates that \eqref{eq:coprimality} holds.  Lemma \ref{lemma:bilinearGeneral} immediately shows that the most important terms $h=k=1$, $l=q$ give the stated bound, but of course we need to treat all cases.  We may assume that the coefficients are zero unless $m_i \asymp M_i$, and $r \asymp R$.
Next we write $r = hr'$ where $h|q$ and $(r', q/h) = 1$.  
Note that \eqref{eq:coprimality} means $(m_1, h) = (m_2, h) = (m_3, h) = 1$, and $(m_3, r') = 1$.
We also move the sum over $k$ to the outside, giving that the left hand side of \eqref{eq:bilinearWithG} is
\begin{multline}
\label{eq:tacos}
\ll \sum_{hk|q} \frac{h}{R q^2\phi(k)} \int_{|u| \leq U} \Big|\sumstar_{\substack{m_1, m_2, m_3 \\ (m_1 m_2 m_3, q) = k}} 
\medspace \sumstar_{(r', m_3)=1} 
\alpha_{m_1} \beta_{m_2} \gamma_{m_3} \delta_{hr'}
\\
 R_k(m_1) R_k(m_2) R_k(dm_3) H(\overline{r'h^2k} m_1 m_2 m_3; l) \Big(\frac{m_1 m_2 m_3}{r'} \Big)^{iu}  \Big| du,
\end{multline}
where the stars on the sums indicate the already-listed coprimality conditions.  The key point is that these conditions are only of the form $(m_i', s) = 1$ where $s$ is an integer independent of the other $m_i'$, and $r'$ (and similarly for the condition on $r'$).  In this way, we can absorb these conditions into the definition of the coefficients without altering the upper bound on their magnitude. 

The next problem is that the condition $(m_1 m_2 m_3, q) = k$ is in terms of the \emph{product} of the $m_i$ and so these variables are not separated.  However, this is easily solved as follows.  For fixed $k | q$, the condition $(m_1 m_2 m_3, q) = k$ means $(m_1 m_2 m_3, k) = k$ and $(m_1, q/k) = (m_2, q/k) = (m_3, q/k) = 1$.
We can parameterize the solutions to $(m_1 m_2 m_3, k) = k$ as follows.  Suppose $(m_1,k) = k_1$, and write $m_1 = k_1 m_1'$, $k = k_1 k'$, so $(m_1, k') = 1$.  Now $(m_2 m_3, k') = k'$, so we can repeat this argument giving say $(m_2, k') = k_2$, $m_2 = k_2 m_2'$, $k' = k_2 k_3$, and so therefore $(m_3, k_3) = k_3$, so we may write $m_3 = k_3 m_3'$.  In this way we separate the $m_i'$. 

 Therefore \eqref{eq:tacos} is 
\begin{multline}
\ll \sum_{\substack{hk|q \\ k = k_1 k_2 k_3}} \frac{h}{R q^2\phi(k)} \int_{|u| \leq U} \Big|\sumstar_{\substack{m_1', m_2', m_3'}} 
\medspace
\sumstar_{\substack{r' \asymp R/h \\ (r', m_3') =1 }} 
\alpha_{k_1 m_1'} \beta_{k_2 m_2'} \gamma_{ k_3 m_3'} \delta_{hr'}
\\
 R_k(m_1' k_1) R_k(m_2' k_2) R_k(m_3' k_3 ) H(\overline{r'h^2} m_1' m_2' m_3'; l) \Big(\frac{m_1 m_2 m_3}{r'} \Big)^{iu} du \Big|.
\end{multline}
Using $|R_k(m)| \leq (m,k)$, we easily see that 
\begin{equation}
\sumstar_{m_1' \ll M_1/k_1} |\alpha_{k_1 m_1'}|^2  |R_{k_1 k_2 k_3}(k_1 m_1')|^2 \leq k_1^2 \sum_{m_1 \ll M_1} |\alpha_{m_1}|^2,
\end{equation}
and similarly for $m_2'$ and $m_3'$.
Therefore, Lemma \ref{lemma:bilinearGeneral} shows that the left hand side of \eqref{eq:bilinearWithG} is
\begin{multline}
 \ll \sum_{h|q} \sum_{k_1 k_2 k_3 =k |q} \frac{h}{R q^2\phi(k)} \Big(\frac{q}{hk}\Big)^{\frac12+ \varepsilon} \Big(\frac{q U}{hk} + \frac{M_1 M_2}{k_1 k_2} \Big)^{\frac12} \Big(\frac{q U}{hk} + \frac{M_3 R}{k_3 h}\Big)^{\frac12} 
 \\
 \Big( k^2\sum_{d,m_1, m_2, m_3, r} d^{1+\varepsilon} |\alpha_{m_1} \beta_{m_2} \gamma_{dm_3} \delta_{dr}|^2 \Big)^{\frac12}.
\end{multline}
One easily checks that larger values of $h$ and $k$ do not create a larger error term than the case with $h=k=1$, so the proof is complete.
\end{proof}

\section{Completion of the proof}
In this section we combine estimates from Sections \ref{section:separationAnalytic} and \ref{section:largesieve}, and prove Proposition \ref{prop:generalH} which in turn implies Theorem \ref{thm:cubicmoment}.

\begin{proof}[Proof of Proposition \ref{prop:generalH}]
Using \eqref{eq:SGK}, Lemma \ref{lemma:Kbound}, and \eqref{eq:GH}, we obtain
\begin{multline}
 S_{\pm}(N_1, N_2, N_3;C; \BesselTransform_0)
 \ll \sum_{M_1, M_2, M_3 \text{ dyadic}} \int_{|{\bf y}| \leq T^{\varepsilon}} \frac{C^{1/2} \Delta T (N_1 N_2 N_3)^{1/2}}{(M_1 M_2 M_3)^{1/2} V} 
 \\
\int_{|u| \ll U}  \Big| \sum_{m_i \asymp M_i} \sum_{r \asymp C/q} m_1^{iy_1} m_2^{iy_2} m_3^{iy_3} c^{iy_4} G(m_1, m_2, m_3, qr) e_{qr}(-m_1 m_2 m_3) \Big(\frac{m_1 m_2 m_3}{qr}\Big)^{iu} du \Big|,
\end{multline}
plus an error term of size $O(T^{-A})$.  Recall we wish to show the bound \eqref{eq:goalbound}.
We need this for the three choices of $U$ and $V$ appearing in Lemma \ref{lemma:Kbound}, so we do not specialize them yet.  By Lemma \ref{lemma:bilinearWithG} (and its following remark), we bound this by
\begin{equation}
\frac{C^{1/2} \Delta T (N_1 N_2 N_3)^{1/2}}{(M_1 M_2 M_3)^{1/2} V}  \frac{q^{1/2 + \varepsilon} T^{\varepsilon}}{Cq}  \Big(qU + M_1 M_2\Big)^{1/2} \Big(qU + \frac{M_3 C}{q}\Big)^{1/2}\Big(\frac{M_1 M_2 M_3 C}{q} \Big)^{1/2}.
\end{equation}
Without using any specializations yet, this simplifies as
\begin{equation}
\label{eq:SsumSimplifiedBoundNotSpecialized}
 \Delta T (N_1 N_2 N_3)^{1/2} (qT)^{\varepsilon} \Big[ \frac{1}{qV}  \Big(qU + M_1 M_2\Big)^{1/2} \Big(qU + \frac{M_3 C}{q}\Big)^{1/2} \Big].
\end{equation}
As the desired bound $\Delta T (N_1 N_2 N_3)^{1/2} (qT)^{\varepsilon}$ is already visible in front, we treat the part of \eqref{eq:SsumSimplifiedBoundNotSpecialized} inside the square brackets.

First consider Case 1 from Lemma \ref{lemma:Kbound} where $M_i \asymp (N_1 N_2 N_3)^{1/2}/N_i$, $V = T$, and $U$ and $C$ are given by \eqref{eq:Udef} and \eqref{eq:CsizeCos}.  Here we have $qU \ll \frac{q T}{\Delta} T^{\varepsilon}$, $\frac{M_3 C}{q} \ll \frac{N_1 N_2}{\Delta T q} T^{\varepsilon} \ll \frac{qT}{\Delta} T^{\varepsilon} $ (recalling $N_i \ll (qT)^{1+\varepsilon}$), and $M_1 M_2 \asymp N_3 \ll (qT)^{1+\varepsilon}$.  Thus \eqref{eq:SsumSimplifiedBoundNotSpecialized} simplifies as
\begin{equation}
 \Delta^{1/2} T (N_1 N_2 N_3)^{1/2} (qT)^{\varepsilon}.
\end{equation}

Next consider Case 2 from Lemma \ref{lemma:Kbound} with $\frac{M_i N_i}{C} \gg T^{\varepsilon}$.  Here we have $X = \frac{M_1 M_2 M_3}{C}$, $V = T$, $U=X^{1/3} T^{2/3}$, \eqref{eq:CMisizesforSin}, and $M_1 N_1 \asymp M_2 N_2 \asymp M_3 N_3$.  
Using \eqref{eq:CMisizesforSin}, we easily deduce $X \ll T \Delta^{-3 + \varepsilon}$.  
This implies $U \ll \frac{T}{\Delta} T^{\varepsilon}$, exactly as the above Case 1.  Similarly, $\frac{M_3 C}{q} \ll \frac{N_1 N_2}{q T \Delta} T^{\varepsilon} \ll (qT)^{1+\varepsilon}$, as in Case 1, and $M_1 M_2 \ll N_3 \Delta^{-2} T^{\varepsilon}$, which is slightly smaller than in Case 1.  It is now clear that Case 2 gives the same bound as Case 1.

Finally we treat the case of $\frac{M_i N_i}{C} \ll T^{\varepsilon}$ (which holds for all $i$, otherwise $K$ is small), giving that the expression in brackets is, using $X \ll T^{\varepsilon}$
\begin{equation}
 \ll \frac{T^{\varepsilon}}{q} \Big(q + M_1 M_2\Big)^{1/2} \Big(q + \frac{M_3 C}{q}\Big)^{1/2}  \ll \frac{T^{\varepsilon}}{q} \Big(q + \frac{C^2}{N_1 N_2}\Big)^{1/2}\Big(q + \frac{C^2}{N_3 q}\Big)^{1/2} .
\end{equation}
It still remains true that $C \asymp \frac{(N_1 N_2 N_3)^{1/2}}{T}$, so we quickly bound the expression in brackets by $(qT)^{\varepsilon}$, as desired.
\end{proof}

\end{document}